\let\old@tocline\@tocline
\let\part@tocline\@tocline
\let\section@tocline\@tocline
\newcommand{\subsection@dotsep}{4.5}
\newcommand{\subsubsection@dotsep}{4.5}
     \leaders\hbox{$\m@th
        \mkern \subsection@dotsep mu\hbox{.}\mkern \subsection@dotsep mu$}\hfill
\let\subsection@tocline\@tocline
\let\@tocline\old@tocline
     \leaders\hbox{$\m@th
        \mkern \subsubsection@dotsep mu\hbox{.}\mkern \subsubsection@dotsep mu$}\hfill
\let\subsubsection@tocline\@tocline
\let\@tocline\old@tocline
\let\old@l@subsection\l@subsection
\let\old@l@subsubsection\l@subsubsection
\def\@tocwriteb#1#2#3{%
  \begingroup
    \@xp\def\csname #2@tocline\endcsname##1##2##3##4##5##6{%
      \ifnum##1>\c@tocdepth
      \else \sbox\z@{##5\let\indentlabel\@tochangmeasure##6}\fi}%
    \csname l@#2\endcsname{#1{\csname#2name\endcsname}{\@secnumber}{}}%
  \endgroup
  \addcontentsline{toc}{#2}%
    {\protect#1{\csname#2name\endcsname}{\@secnumber}{#3}}}%
\newlength{\@tocpartindent}
\newlength{\@tocsectionindent}
\newlength{\@tocsubsectionindent}
\newlength{\@tocsubsubsectionindent}
\newlength{\@tocpartnumwidth}
\newlength{\@tocsectionnumwidth}
\newlength{\@tocsubsectionnumwidth}
\newlength{\@tocsubsubsectionnumwidth}
\newcommand{\settocpartnumwidth}[1]{\setlength{\@tocpartnumwidth}{#1}}
\newcommand{\settocsectionnumwidth}[1]{\setlength{\@tocsectionnumwidth}{#1}}
\newcommand{\settocsubsectionnumwidth}[1]{\setlength{\@tocsubsectionnumwidth}{#1}}
\newcommand{\settocsubsubsectionnumwidth}[1]{\setlength{\@tocsubsubsectionnumwidth}{#1}}
\newcommand{\settocpartindent}[1]{\setlength{\@tocpartindent}{#1}}
\newcommand{\settocsectionindent}[1]{\setlength{\@tocsectionindent}{#1}}
\newcommand{\settocsubsectionindent}[1]{\setlength{\@tocsubsectionindent}{#1}}
\newcommand{\settocsubsubsectionindent}[1]{\setlength{\@tocsubsubsectionindent}{#1}}
\renewcommand{\l@part}{\part@tocline{-1}{\@tocpartvskip}{\@tocpartindent}{}{\@tocpartformat}}%
\renewcommand{\l@section}{\section@tocline{1}{\@tocsectionvskip}{\@tocsectionindent}{}{\@tocsectionformat}}%
\renewcommand{\l@subsection}{\subsection@tocline{2}{\@tocsubsectionvskip}{\@tocsubsectionindent}{}{\@tocsubsectionformat}}%
\renewcommand{\l@subsubsection}{\subsubsection@tocline{3}{\@tocsubsubsectionvskip}{\@tocsubsubsectionindent}{}{\@tocsubsubsectionformat}}%
\newcommand{\@tocpartformat}{}
\newcommand{\@tocsectionformat}{}
\newcommand{\@tocsubsectionformat}{}
\newcommand{\@tocsubsubsectionformat}{}
\def\csname toc@0format\endcsname{\@tocpartformat}
\def\csname toc@1format\endcsname{\@tocsectionformat}
\def\csname toc@2format\endcsname{\@tocsubsectionformat}
\def\csname toc@3format\endcsname{\@tocsubsubsectionformat}
\newcommand{\settocpartformat}[1]{\renewcommand{\@tocpartformat}{#1}}
\newcommand{\settocsectionformat}[1]{\renewcommand{\@tocsectionformat}{#1}}
\newcommand{\settocsubsectionformat}[1]{\renewcommand{\@tocsubsectionformat}{#1}}
\newcommand{\settocsubsubsectionformat}[1]{\renewcommand{\@tocsubsubsectionformat}{#1}}
\newlength{\@tocpartvskip}
\newcommand{\settocpartvskip}[1]{\setlength{\@tocpartvskip}{#1}}
\newlength{\@tocsectionvskip}
\newcommand{\settocsectionvskip}[1]{\setlength{\@tocsectionvskip}{#1}}
\newlength{\@tocsubsectionvskip}
\newcommand{\settocsubsectionvskip}[1]{\setlength{\@tocsubsectionvskip}{#1}}
\newlength{\@tocsubsubsectionvskip}
\newcommand{\settocsubsubsectionvskip}[1]{\setlength{\@tocsubsubsectionvskip}{#1}}
\patchcmd{\tocsection}{\indentlabel}{\makebox[\@tocsectionnumwidth][l]}{}{}
\patchcmd{\tocsubsection}{\indentlabel}{\makebox[\@tocsubsectionnumwidth][l]}{}{}
\patchcmd{\tocsubsubsection}{\indentlabel}{\makebox[\@tocsubsubsectionnumwidth][l]}{}{}
\newcommand{\@sectypepnumformat}{}
\renewcommand{\contentsline}[1]{%
  \expandafter\let\expandafter\@sectypepnumformat\csname @toc#1pnumformat\endcsname%
  \csname l@#1\endcsname}
\newcommand{\@tocpartpnumformat}{}
\newcommand{\@tocsectionpnumformat}{}
\newcommand{\@tocsubsectionpnumformat}{}
\newcommand{\@tocsubsubsectionpnumformat}{}
\newcommand{\setpartpnumformat}[1]{\renewcommand{\@tocpartpnumformat}{#1}}
\newcommand{\setsectionpnumformat}[1]{\renewcommand{\@tocsectionpnumformat}{#1}}
\newcommand{\setsubsectionpnumformat}[1]{\renewcommand{\@tocsubsectionpnumformat}{#1}}
\newcommand{\setsubsubsectionpnumformat}[1]{\renewcommand{\@tocsubsubsectionpnumformat}{#1}}
\renewcommand{\@tocpagenum}[1]{%
  \hfill {\mdseries\@sectypepnumformat #1}}
\let\oldappendix\appendix
\renewcommand{\appendix}{%
  \leavevmode\oldappendix%
  \addtocontents{toc}{%
    \protect\settowidth{\protect\@tocsectionnumwidth}{\protect\@tocsectionformat\sectionname\space}%
    \protect\addtolength{\protect\@tocsectionnumwidth}{2em}}%
}
\let\oldtableofcontents\tableofcontents
\renewcommand{\tableofcontents}{%
  \vspace*{-\linespacing}
  \oldtableofcontents}
    \definecolor{RC-purple}{rgb}{0.85,0,1}
    \tikzset{
        vertLabel/.style={anchor=south, rotate=90, inner sep=.5mm},
        vertLabelSwap/.style={anchor=south, rotate=-90, inner sep=.5mm}
    }
    \tikzset{
        double line with arrow/.style args={#1,#2}{decorate,decoration={markings,%
        mark=at position 0 with {\coordinate (ta-base-1) at (0,1pt);
        \coordinate (ta-base-2) at (0,-1pt);},
        mark=at position 1 with {\draw[#1] (ta-base-1) -- (0,1pt);
        \draw[#2] (ta-base-2) -- (0,-1pt);
        }}}
    }
    \tikzset{
        Equal/.style={-,double line with arrow={-,-}}
    }
    \pgfplotsset{compat=1.15}
       \def\MR#1{}
\numberwithin{equation}{subsection}
\newtheorem{theorem}{Theorem}[subsection]
\newtheorem*{theorem*}{Theorem}
\newtheorem*{question*}{Question}
\newtheorem{lemma}[theorem]{Lemma}
\newtheorem{corollary}[theorem]{Corollary}
\newtheorem{proposition}[theorem]{Proposition}
\newtheoremstyle{defspaced}
  {.4\baselineskip\@plus.1\baselineskip\@minus.1\baselineskip} 
  {.4\baselineskip\@plus.1\baselineskip\@minus.1\baselineskip} 
  {\normalfont} 
  {}    
  {\bfseries} 
  {.}   
  { }   
  {}    
\theoremstyle{defspaced}
\newtheorem{remark}[theorem]{Remark}
\newtheorem*{notation*}{Notation}
\newtheorem{conj}[theorem]{Conjecture}
\newcommand*{\addFileDependency}[1]{
\typeout{(#1)}
%
%
\@addtofilelist{#1}
%
\IfFileExists{#1}{}{\typeout{No file #1.}}
}
\newcommand{\citeOrEmpty}[2][]{%
  \ifnum\pdfstrcmp{#2}{}=0
    #1\ignorespaces
  \else
    \cite[{#1}]{#2}
  \fi
  \ignorespacesafterend
}
\NewDocumentCommand{\firstitem}{m}
 {
  \clist_item:nn { #1 } { 1 }
 }
\DeclareFontFamily{U}{stix2-mathex}{}
\DeclareFontShape{U}{stix2-mathex}{m}{n}{
  <-> stix2-mathex
}{}
\DeclareSymbolFont{stix2-mathexSymb}{U}{stix2-mathex}{m}{n}
\DeclareMathDelimiter{\stixlbrbrak}{\mathopen}{stix2-mathexSymb}{20}{stix2-mathexSymb}{20}
\DeclareMathDelimiter{\stixrbrbrak}{\mathclose}{stix2-mathexSymb}{21}{stix2-mathexSymb}{21}
\DeclareMathOperator{\Hom}{Hom}
\DeclareMathOperator{\GL}{GL}
\DeclareMathOperator{\SL}{SL}
\DeclareMathOperator{\Sp}{Sp}
\DeclareMathOperator{\Ind}{Ind}
\newcommand{\A}{\mathbb{A}}
\newcommand{\Z}{\mathbb{Z}}
\newcommand{\Q}{\mathbb{Q}}
\newcommand{\R}{\mathbb{R}}
\newcommand{\C}{\mathbb{C}}
\newcommand{\F}{\mathbb{F}}
\newcommand{\opn}{\operatorname}
\newcommand{\flatflat}{\flat\kern-1.4pt\flat}
\newcommand{\fa}{\mathfrak{a}}
\newcommand{\fg}{\mathfrak{g}}
\newcommand{\fm}{\mathfrak{m}}
\newcommand{\fn}{\mathfrak{n}}
\newcommand{\fs}{\mathfrak{s}}
\newcommand{\bA}{\mathbb{A}}
\newcommand{\bF}{\mathbb{F}}
\newcommand{\bG}{\mathbb{G}}
\newcommand{\cA}{\mathcal{A}}
\newcommand{\cB}{\mathcal{B}}
\newcommand{\cE}{\mathcal{E}}
\newcommand{\cF}{\mathcal{F}}
\newcommand{\cG}{\mathcal{G}}
\newcommand{\cL}{\mathcal{L}}
\newcommand{\cO}{\mathcal{O}}
\newcommand{\cP}{\mathcal{P}}
\newcommand{\cS}{\mathcal{S}}
\newcommand{\cT}{\mathcal{T}}
\newcommand{\cU}{\mathcal{U}}
\newcommand{\cV}{\mathcal{V}}
\newcommand{\cW}{\mathcal{W}}
\newcommand{\rd}{\mathrm{d}}
\newcommand{\tS}{\mathtt{S}}
\newcommand{\Ad}{\ensuremath{\mathrm{Ad}}}
\DeclareMathOperator{\gl}{\mathfrak{gl}}
\DeclareMathOperator{\Gr}{Gr}
\DeclareMathOperator{\Sym}{Sym}
\newcommand{\RNum}[1]{\uppercase\expandafter{\romannumeral #1\relax}}
\newlist{enumerateThm}{enumerate}{1}
\setlist[enumerateThm]{
  label=(\arabic*),
  ref=\thetheorem(\arabic*),
  }
\apptocmd{\@begintheorem}{\let\thmtype\@currenvir}{}{\ERROR}
\newcounter{keepeqno}
\newenvironment{num}
 {\setcounter{keepeqno}{\value{equation}}%
  \begin{list}{(\theequation)}{\usecounter{equation}}%
  \setcounter{equation}{\value{keepeqno}}}
 {\end{list}}
\newcommand{\Bun}{\mathrm{Bun}}
\newcommand{\Shv}{\mathrm{Shv}}
\newcommand{\cusp}{\mathrm{cusp}}
\newcommand{\Vect}{\mathrm{Vect}}
\newcommand{\ev}{\mathrm{ev}}
\newcommand{\AS}{\mathrm{AS}}
\newcommand{\pr}{\mathrm{pr}}
\newcommand{\id}{\mathrm{id}}
\newcommand{\inj}{\hookrightarrow}
\newcommand{\surj}{\twoheadrightarrow}
\newcommand{\Mir}{\mathrm{Mir}}
\newcommand{\Ga}{\mathbb{G}_{\mathrm{a}}}
\newcommand{\CT}{\mathrm{CT}}
\newcommand{\FT}{\mathrm{FT}}
\newcommand{\sub}{\subset}
\newcommand{\om}{\omega}
\newcommand\tilM{\widetilde{M}}
\newcommand\tilN{\widetilde{N}}
\newcommand\tilc{\widetilde{c}}
\newcommand\tilf{\widetilde{f}}
\newcommand\tilq{\widetilde{q}}
\newcommand{\uk}{\underline{k}}
\newcommand{\Loc}{\mathrm{Loc}}
\newcommand{\IndCoh}{\mathrm{IndCoh}}
\newcommand{\Nilp}{\mathrm{Nilp}}
\title{On the relative Langlands duality for $\Sp_{2n} \backslash \GL_{2n+1}$ (with an appendix by Zeyu Wang)}
\author{Weixiao Lu}
\address{Massachusetts Institute of Technology, Department of Mathematics, 77 Massachusetts Avenue, Cambridge, MA 02139, USA}
\email{weixiaol@mit.edu}
\author{Guodong Xi}
\address{University of Minnesota, Department of Mathematics, 206 Church St. S.E., Minneapolis, MN 55455, USA}
\email{xi000023@umn.edu}
\begin{document}

\maketitle

\date{\today}

\begin{abstract}
    We verify the relative Langlands duality conjecture proposed by Ben-Zvi, Sakellaridis, Venkatesh \cite{BZSV} for the hyperspherical Hamiltonian variety $T^*(\Sp_{2n}\backslash \GL_{2n+1})$. We provide numerical (over number fields and function fields) and geometric (in the \'{e}tale setting) evidence that its dual Hamiltonian variety should be $T^*(\GL_n \times \GL_{n+1} \backslash \GL_{2n+1})$ as is predicted by \cite{BZSV}.
\end{abstract}

\tableofcontents

\section{Introduction}

In the seminal work \cite{BZSV}, Ben-Zvi, Sakellaridis and Venkatesh (BZSV for short) proposed the well-known relative Langlands duality, which we may also call the BZSV duality. Let $G$ be a connected reductive group. The BZSV duality concerns the duality between certain $G$-Hamiltonian variety $M$ and $\widehat{G}$-Hamiltonian variety $\widehat{M}$, in the sense that the ``period" attached to $M$ on the automorphic side (A-side) should match with the ``$L$-function" attached to $\widehat{M}$ on the spectral side (B-side). One of the key features of the BZSV duality is that the double dual $\widehat{\widehat{M}}$ is expected to coincide with $M$. Therefore, we may switch the A-side and B-side and expect that the period associated to $\widehat{M}$ should also match with the $L$-function associated to $M$.

In the case of the polarized hyperspherical varieties in the sense of \cite[\S 3]{BZSV}, BZSV proposed a conjectural description of the dual varieties \cite[\S 4]{BZSV}. The structure of a hyperspherical variety is given by a quadruple $\Delta = (G,H,\rho_H,\iota)$,  Here $G$ is a split reductive group; $H$ is a reductive subgroup of $G$; $\rho_H$ is a symplectic representation of $H$; and $\iota$ is a homomorphism from $\SL_2$ into $G$ whose image commutes with $H$. In \cite[\S 4]{BZSV}, they proposed a combinatorial method to compute the dual of a polarized hyperspherical Hamiltonian variety that is attached to a ``dual quadruple" $\widehat{\Delta}=(\widehat{G}, \widehat{H}^\prime, \rho_{\widehat{H}^\prime}, \hat{\iota}^\prime)$.

The goal of this article is to verify some cases of the BZSV conjecture for the hyperspherical $\GL_{2n+1}$-variety $T^*(\Sp_{2n} \backslash \GL_{2n+1})$, or the quadruple $(\GL_{2n+1},\Sp_{2n},0,1)$. We show that its dual is $T^*(\GL_n \times \GL_{n+1} \backslash \GL_{2n+1})$, or attached to the quadruple $(\GL_{2n+1},\GL_n \times \GL_{n+1},0,1)$ as predicted by \cite[\S 4]{BZSV}. The conjecture has various setting; we mainly concentrate on the global numerical and the global geometric setting. We now describe them in detail.

\subsection{Numerical Result}

We briefly recall the BZSV conjecture in the numerical (also known as the classical Langlands) setting (see also \cite{MWZ1}). The map $\iota$ induces an adjoint action of $H \times \SL_2$ on the Lie algebra $\mathfrak{g}$ of $G$ and we can decompose it as
\[
    \mathfrak{g}= \bigoplus_{ k \in I} \rho_k \otimes \Sym^k 
\]
where $\rho_k$ is some representation of $H$ and $I$ is a finite subset of $\mathbb{Z}_{\geq 0}$. Let $I_{\mathrm{odd}}$ denote the subset of $I$ containing all the odd numbers and let
\[
    \rho_{H, \iota}= \rho_H \oplus \left(\bigoplus_{k \in I_{\mathrm{odd}}} \rho_k \right).
\]

Let $F$ denote a global field and let $\mathbb{A}=\mathbb{A}_F$. For a BZSV quadruple, $\rho_{H, \iota}$ is a symplectic anomaly-free representation of $H$. Take a maximal isotropic subspace $Y$ of $\rho_{H, \iota}$. For a Schwartz function $\Phi$ on $Y(\mathbb{A})$, the previous condition ensures we can define the associated theta series $\Theta(h, \Phi)$ on $H(\mathbb{A})$.

For automorphic form $\varphi$ on $G(\mathbb{A})$, let $\cP_\iota(\varphi)$ denote the degenerate Fourier coefficient associated to $\iota$. We define the following period integral
\[
    \cP_{H, \iota, \rho_H}(\varphi, \Phi)=\int_{[H]}\cP_\iota(\varphi)(h)\Theta(h, \Phi)\rd h
\]
whenever the integral is convergent.
The following conjecture (\cite[\S 14]{BZSV}, \cite[\S 1.1]{MWZ1}) is the main conjecture regarding this period integral.
\begin{conj}
    Let $\pi$ be an irreducible automorphic representation of $G(\mathbb{A})$. Then the period integral $\cP_{H, \iota, \rho_H}(\varphi)$ is nonzero only if the Arthur parameter of $\pi$ factors through
    \[
        \hat{\iota}^\prime : \widehat{H}^\prime(\mathbb{C}) \times \SL_2(\mathbb{C}) \to \widehat{G}(\mathbb{C}).
    \]
    If this is the case and $\pi$ is lifting of a global tempered Arthur packet of $H^\prime(\mathbb{A})$, then we have
    \begin{equation} \label{eq:MWZ}
         \frac{|\cP_{H, \iota, \rho_H}(\varphi)|^2}{\langle \varphi, \varphi \rangle} ``=" \frac{\displaystyle L \left(\frac{1}{2}, \Pi, \rho_{\widehat{H}^\prime}\right)\cdot \prod_{k \in \hat{I}}L \left( \frac{k}{2}+1, \Pi, \hat{\rho}_k \right)}{L(1, \Pi, \Ad)^2}.
    \end{equation}
    Here $\langle \, , \, \rangle$ denotes a certain version of $L^2$-norm.
\end{conj}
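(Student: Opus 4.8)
The plan is to establish the conjecture in the case at hand, $(G,H,\rho_H,\iota) = (\GL_{2n+1},\Sp_{2n},0,1)$. Here $\iota$ is trivial, so $\mathfrak{g} = \mathfrak{gl}_{2n+1}$ carries no $\Sym^{>0}$-piece, $\rho_{H,\iota} = \rho_H = 0$, the theta series disappears, and
\[
    \cP_{H,\iota,\rho_H}(\varphi) = \int_{[\Sp_{2n}]}\varphi(h)\,\rd h
\]
is the bare $\Sp_{2n}$-period along $\Sp_{2n}\subset\GL_{2n}\subset\GL_{2n+1}$. The first step is bookkeeping: run the recipe of \cite[\S4]{BZSV} to pin down the dual quadruple $\widehat{\Delta} = (\GL_{2n+1},\GL_n\times\GL_{n+1},\rho_{\widehat{H}'},\hat{\iota}')$ --- in particular $\rho_{\widehat{H}'}$ and whether $\hat{\iota}'$ is trivial or a nontrivial $\SL_2$ --- and hence the precise right-hand side of \eqref{eq:MWZ}, which I expect to reduce, up to the abelian and archimedean factors absorbed into the approximate equality, to the central Rankin--Selberg value $L(1/2,\pi_1\times\pi_2)$ (and its contragredient) over a product of adjoint $L$-values, where $\Pi = \pi_1\boxplus\pi_2$ with $\pi_1,\pi_2$ cuspidal on $\GL_n,\GL_{n+1}$. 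The ``only if'' clause then says the period vanishes unless $\pi$ is such an isobaric sum; for cuspidal $\pi$ on $\GL_{2n+1}$ this follows from the uniqueness of Klyachko models --- every local component $\pi_v$ is generic, hence carries the Whittaker model and not the $\Sp_{2n}$-model, so the global period vanishes --- and the content of \eqref{eq:MWZ} concerns the regularized $\Sp_{2n}$-period of an Eisenstein form in $\pi_1\boxplus\pi_2$.

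For that content the tool is unfolding. Write $\varphi$ as a value (or residue) of the Eisenstein series $E(g,\phi,s)$ on $\GL_{2n+1}$ induced from the parabolic $P$ with Levi $\GL_n\times\GL_{n+1}$ and cuspidal data $\pi_1\boxtimes\pi_2\otimes|\det|^s$, and expand its period along the $\Sp_{2n}$-orbits on $P\backslash\GL_{2n+1} = \Gr(n,2n+1)$, classified by $\dim(U\cap V_0)$ and the rank of the symplectic form on $U\cap V_0$ (with $V_0\subset\mathbb{F}^{2n+1}$ the symplectic hyperplane). There is a unique open orbit, whose stabilizer $R\subset\Sp_{2n}$ --- a group of dimension $n^2$ which I expect to be $\GL_n$, possibly extended by a unipotent subgroup with a generic character (this Whittaker component being what produces a nontrivial $\hat{\iota}'$) --- should be identified so that the open-orbit term is, after the further unfolding supplied by the Whittaker expansion of $\pi_1$ if needed, the global Rankin--Selberg integral of Jacquet--Piatetski-Shapiro--Shalika for $\pi_1\times\pi_2$ at the shifted point. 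One then shows the non-open orbits contribute nothing: their stabilizers contain unipotent radicals of proper parabolics of the Levi, along which a cusp form has vanishing constant term, or else they give functions of $s$ holomorphic at the relevant value and drop out upon taking the residue. This identifies $\cP_{\Sp_{2n}}(\varphi)$ with $L(1/2,\pi_1\times\pi_2)$ up to local integrals.

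It remains to do the local computation and assembly. At almost all places the local orbital integral is evaluated by the Casselman--Shalika formula (or a direct Iwasawa-decomposition calculation on $\Gr(n,2n+1)$) and matched against the local Rankin--Selberg factor normalized by the BZSV denominator; at the finitely many remaining places one shows the local integral is holomorphic and nonvanishing, which is all the approximate equality demands. Separately, the regularized norm $\langle\varphi,\varphi\rangle$ of an Eisenstein vector is computed by the Langlands inner-product formula, its residue/limit producing $L(1,\Pi,\Ad)^2$ through the constant term of $E$ and the Langlands--Shahidi normalization of the standard intertwining operator; combining the three inputs gives \eqref{eq:MWZ}. The function-field case runs verbatim, and in the geometric (étale) setting one reinterprets the period as the pull--push functor $\Shv(\Bun_{\GL_{2n+1}})\to\Shv(\Bun_{\Sp_{2n}})$ and computes its composition with Eisenstein series from $\Bun_{\GL_n\times\GL_{n+1}}$ via geometric Satake and the geometric Rankin--Selberg period, obtaining the sheaf-theoretic refinement whose trace of Frobenius recovers the numerical identity. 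A structurally cleaner alternative, worth pursuing in parallel, is a comparison of relative trace formulas --- the one for $(\Sp_{2n},\GL_{2n+1})$ against a Whittaker--Rankin--Selberg relative trace formula for $\GL_n\times\GL_{n+1}$ --- reducing \eqref{eq:MWZ} to a fundamental lemma plus smooth transfer of orbital integrals.

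The main obstacle is the orbit analysis: the number of $\Sp_{2n}$-orbits on $\Gr(n,2n+1)$, which ones are ``relevant'', the exact structure of the open-orbit stabilizer $R$ and its generic character, and the vanishing or holomorphy of the non-open contributions all depend on the parity of $n$ and on choosing $P$ and the unfolding correctly; this is also precisely the information needed to fix $\hat{\iota}'$ in the first step, so Steps 1 and 2 are genuinely intertwined. A secondary difficulty, intrinsic to a result of this form, is making the approximate equality exact --- fixing compatible measures, controlling the local constants at the ramified and archimedean places, and matching $\langle\varphi,\varphi\rangle$ to $L(1,\Pi,\Ad)^2$ on the nose.
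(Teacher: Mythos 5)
Your overall strategy---unfold the $\Sp_{2n}$-period of the cuspidal Eisenstein series along $\Sp_{2n}$-orbits on $\Gr(n,2n+1)$, identify the open orbit with a Rankin--Selberg integral, show the remaining orbits contribute zero, and compare against the Langlands inner-product formula---does track the paper's proof of its Theorem~\ref{thm:intro_main}. But two of your key inputs are wrong, one of them fatally so.

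First, the argument you give for the vanishing of the $\Sp_{2n}$-period of cuspidal $\pi$ on $\GL_{2n+1}$, via ``uniqueness of Klyachko models: every local component is generic, hence carries the Whittaker model and not the $\Sp_{2n}$-model, so the global period vanishes,'' does not work. The local multiplicity space $\Hom_{\Sp_{2n}(F_v)}(\pi_v,\mathbf{1})$ is in general \emph{not} zero for generic $\pi_v$ on $\GL_{2n+1}(F_v)$; indeed the paper's own computations in \S\ref{ssec:period_Eisenstein_series} force $\Hom_{\Sp_{2n}(F_v)}(\Pi_v,\mathbf 1)\neq 0$ for the local components of $\pi_1\boxplus\pi_2$, and the paper explicitly flags the claim you invoke (made in \cite{AGR}) as incorrect. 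The paper's vanishing (Proposition~\ref{prop:main_vanishing} and Corollary~\ref{cor:symplectic_period}) is obtained by a genuinely global argument: a Klingen--mirabolic period lemma (Lemma~\ref{lem:Klingen-mirabolic}), proved by Fourier expansion and induction, combined with the $\Sp_{2n}$-period of an Epstein series to reduce cuspidality to $\theta$-cuspidality. You cannot replace this with a local multiplicity-one statement.

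Second, your prediction of the $L$-value is off. With the dual quadruple $(\GL_{2n+1},\GL_n\times\GL_{n+1},0,1)$, both $\rho_{\widehat H'}$ and $\hat\iota'$ are trivial, $\hat I=\{0\}$, and $\hat\rho_0=\mathfrak{gl}_{2n+1}$ restricted to $\GL_n\times\GL_{n+1}$; the relevant factor is therefore $L(1,\Pi,\hat\rho_0)$, which unwinds to $L(1,\Pi_n^\vee\times\Pi_{n+1})L(1,\Pi_n\times\Pi_{n+1}^\vee)$ (times the adjoint pieces that cancel into the denominator). There is no $L(\tfrac12,\pi_1\times\pi_2)$ anywhere, and no nontrivial $\SL_2$ on the dual side producing it. Relatedly, the open-orbit stabilizer computed in \S\ref{ssec:list_of_orbit} is the Siegel parabolic $P'\subset\Sp_{2n}$ (stabilizer of a maximal isotropic $V_0$), not ``$\GL_n$ extended by a unipotent subgroup with a generic character'': there is no character on the stabilizer, and the Whittaker functions enter only in the secondary unfolding of the cuspidal inducing data in the intertwining period $J(\varphi,\lambda)$, exactly as in the Rankin--Selberg method. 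Finally, the non-open orbits are killed not just by ``unipotent radicals of the Levi'' but by the normal subgroups $S_\gamma\cdot U_\gamma$ containing a symplectic factor $S_\gamma$ embedded in the Levi, against which Corollary~\ref{mixed period} (mixed symplectic-plus-unipotent period) is applied; getting this right is exactly where Sections~\ref{sec:cuspidal} and~\ref{sec:orbits} interact.
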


\begin{remark}
    Note that in \cite{MWZ1}, the authors state the conjecture for only discrete $\pi$, in which case $\langle \, , \, \rangle$ denotes the standard $L^2$-norm. However, it's believed that the statement should hold for general $\pi$, as stated in the above conjecture.
\end{remark}

If we consider the BZSV quadruple $(\GL_{2n+1}, \GL_n \times \GL_{n+1}, 0, 1)$. In \cite{LinearPeriods}, the authors verified that the associated linear period vanishes for cuspidal automorphic representations of $\GL_{2n+1}(\mathbb{A})$. Moreover, let $\pi$ be a cuspidal automorphic representation of $\GL_{2n}(\mathbb{A})$ whose 
hypothetical Langlands parameter factors through $\Sp_{2n}(\mathbb{C})$ and let $1$ denote the trivial representation of $\GL_1$. Then in \cite[Theorem 5.1]{LinearPeriods}, the authors verified that if the automorphic representation of $\GL_{2n+1}(\mathbb{A})$ is $\pi \boxplus 1$, the regularized period integral defined in \emph{loc. cit.} represents the standard $L$-function $L(1, \pi)$. This coincides with the claimed identity in the previous conjecture. The results above suggest the BZSV dual of $(\GL_{2n+1}, \GL_n \times \GL_{n+1}, 0, 1)$ should be $(\GL_{2n+1}, \Sp_{2n}, 0, 1)$, as is predicted by \cite{BZSV}.

In this article, we switch the A-side and the B-side of the linear period mentioned above. Consider the BZSV quadruple $(\GL_{2n+1}, \Sp_{2n}, 0, 1)$, we verify that its dual quadruple should be $(\GL_{2n+1},\GL_n \times \GL_{n+1},0,1)$. More concretely, we prove the following result:

\begin{theorem} \label{thm:intro_main}
    Let $\pi$ be an automorphic representation of $\GL_{2n+1}(\bA_F)$, then
    \begin{enumerate}
        \item If $\pi$ is cuspidal, or $\pi$ is of the form $\Pi_1 \boxplus \Pi_2$, where $\Pi_i$ are cuspidal automorphic representation of $\GL_{n_i}(\bA_F)$ with central character trivial on $A_{\GL_{n_i}}^\infty$ and $n_1+n_2 = 2n+1$ with $1 \le n_i \le n-1$. Then for any $f \in \pi$, we have
        \begin{equation*}
            \int_{[\Sp_{2n}]} f(h) \rd h = 0. 
        \end{equation*}
        \item If $\pi$ is of the form $\Pi_n \boxplus \Pi_{n+1}$, where $\Pi_i$ is a cuspidal automorphic representation of $\GL_i(\bA_F)$ with central character trivial on $A_{\GL_i}^\infty$ for $i=n,n+1$. Let $\Pi := \Pi_n \boxtimes \Pi_{n+1}$ be the cuspidal automorphic representation of $\GL_n(\bA_F) \times \GL_{n+1}(\bA_F)$, then for $f = \otimes f_v \in \pi$, we have
        \begin{equation} \label{eq:main}
           \frac{\lvert \cP(f) \rvert^2}{\langle f, f \rangle_{\mathrm{Pet}}}= \frac{\zeta^*(1)\zeta(3)\cdots \zeta(2n+1)}{\zeta(2) \cdots \zeta(2n)} \frac{L^*(1, \Pi, \hat{\rho}_0)}{L^*(1, \Pi, \Ad)^2}\prod_{v}\frac{|\cP_{v}^\natural(f_v)|^2}{\langle f_v , f_v \rangle^\natural}.
        \end{equation}
    \end{enumerate}
\end{theorem}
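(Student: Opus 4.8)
The plan is to establish all three statements by a single device: realize $\pi$ inside the space of automorphic forms on $\GL_{2n+1}$ and unfold the $\Sp_{2n}$-period geometrically, following the strategy by which linear periods are treated in \cite{LinearPeriods}. Throughout I use that $\Sp_{2n}$ sits inside $\GL_{2n+1}$ through $\Sp_{2n}\subset\GL_{2n}\hookrightarrow\GL_{2n+1}$, $g\mapsto\diag(g,1)$, and in particular lies in the mirabolic subgroup. For part~(1) with $\pi$ cuspidal, genericity lets me replace $f$ by its Fourier expansion along the mirabolic, $f(g)=\sum_{\gamma\in N_{2n}(F)\backslash\GL_{2n}(F)}W_f(\diag(\gamma,1)g)$ with $W_f$ the $\psi$-Whittaker function. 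Substituting $g=h\in\Sp_{2n}(\bA)$ and folding the sum into $\int_{[\Sp_{2n}]}$ produces a sum over the double cosets $N_{2n}(F)\backslash\GL_{2n}(F)/\Sp_{2n}(F)$, and the term attached to a representative $\delta$ contains an inner integral of $W_f$ over the unipotent subgroup $U_\delta:=\delta^{-1}N_{2n}\delta\cap\Sp_{2n}$ of $\Sp_{2n}$ (integrated over $U_\delta(F)\backslash U_\delta(\bA)$). The key point is that the Whittaker character restricts to a \emph{nontrivial} character of $U_\delta$ for every $\delta$, so every such integral vanishes; checking this amounts to classifying the $\Sp_{2n}(F)$-orbits of flags in $F^{2n+1}$ and verifying that none of them can absorb the generic character --- this is where the oddness of $2n+1$ is essential, and I expect it to be the delicate point of~(1).

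For the remaining two cases I realize $f$ as the value at the relevant point of the Eisenstein series $E(g,\varphi,s)$ on $\GL_{2n+1}$ attached to the parabolic $P=MN$ with Levi $M=\GL_{n_1}\times\GL_{n_2}$ and cuspidal inducing datum $\Pi_1\otimes\Pi_2$; that point is regular, since the two factors live on groups of distinct ranks. Because the $\Sp_{2n}$-period of an Eisenstein series is not absolutely convergent, I would first install a regularization --- Arthur truncation $\Lambda^T$ together with the controlled dependence of the truncated period on $T$, equivalently the theory of regularized periods over reductive subgroups --- and verify its compatibility with the unfolding $\int_{[\Sp_{2n}]}\Lambda^T E(h,\varphi,s)\,dh=\sum_{\delta\in P(F)\backslash\GL_{2n+1}(F)/\Sp_{2n}(F)}\int\varphi_s(\delta h)\,dh$ up to terms supported near the cusps. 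Here $P(F)\backslash\GL_{2n+1}(F)$ is the Grassmannian $\Gr(n_1,2n+1)$, on which the $\Sp_{2n}$-orbits are parametrized by the rank of the restricted symplectic form together with the position of the fixed line; for each non-open orbit the stabilizer contains a unipotent subgroup along which the inner integral becomes a proper constant-term integral of a cuspidal $\Pi_i$, hence vanishes. When $n_i\le n-1$ no open-orbit term survives --- a generic stabilizer of the available dimension cannot accommodate $\GL_{n_1}$ or $\GL_{n_2}$ of those sizes --- so $\cP(f)=0$, which is~(1); when $M=\GL_n\times\GL_{n+1}$ exactly one orbit, the open one, contributes.

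On the open orbit the surviving term factorizes as $\prod_v\int\varphi_{s,v}(\delta_0 h_v)\,dh_v$, and at the unramified places I would evaluate each factor by a Casselman--Shalika / Gindikin--Karpelevich type computation and match it with the local $L$- and $\zeta$-factors on the right of~\eqref{eq:main}; this simultaneously identifies $\hat\rho_0$ with the Rankin--Selberg representation $\Std_n\boxtimes\Std_{n+1}^\vee\oplus\Std_n^\vee\boxtimes\Std_{n+1}$ (together with the adjoint of $\GL_n\times\GL_{n+1}$) and produces the ratio $\zeta(3)\cdots\zeta(2n+1)/(\zeta(2)\cdots\zeta(2n))$ as the normalization. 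To pass from $\cP(f)$ to $\lvert\cP(f)\rvert^2/\langle f,f\rangle_{\mathrm{Pet}}$ I would combine $\overline{\cP(f)}=\cP(\bar f)$, with $\bar f$ in the contragredient isobaric sum, so that $\lvert\cP(f)\rvert^2$ becomes a doubled Euler product, with the Langlands--Maass--Selberg formula for the regularized Petersson norm of $E(g,\varphi,s)$ at our point, which contributes the reciprocal of the relevant Rankin--Selberg factors and the residue $\zeta^*(1)$. The completed $L$-values then recombine into $\zeta^*(1)\zeta(3)\cdots\zeta(2n+1)/(\zeta(2)\cdots\zeta(2n))\cdot L^*(1,\Pi,\hat\rho_0)/L^*(1,\Pi,\Ad)^2$ times $\prod_v\lvert\cP_v^\natural(f_v)\rvert^2/\langle f_v,f_v\rangle^\natural$, the stars recording the poles at the edge point $s=1$ of $\zeta$ and of $L(s,\Pi,\hat\rho_0)$ and $L(s,\Pi,\Ad)$.

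The main obstacle I anticipate is not the orbit classification or the unramified integral --- lengthy but mechanical once the set-up is fixed --- but the interaction of the regularization with the geometric unfolding and the resulting accounting of boundary terms: each non-open orbit, each truncation correction, and the Maass--Selberg inner product all contribute terms concentrated near the cusps of $\Sp_{2n}$, and showing that they cancel so as to leave precisely the right-hand side of~\eqref{eq:main}, with the correct leading Laurent coefficients, is where the real work lies.
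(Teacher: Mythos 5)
Your overall outline (unfold, classify orbits, show only the open orbit survives, compute the unramified local integral via Rankin--Selberg) is broadly compatible with what the paper does, but two of your central technical premises are wrong, and they would send you down an unnecessarily hard road.

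First, and most importantly: the $\Sp_{2n}$-period of $E(\cdot,\varphi,\lambda)$ for $\varphi$ cuspidal on a maximal parabolic is \emph{absolutely convergent} --- no regularization, no boundary-term bookkeeping. The paper establishes this in \S\ref{sec:convergence} by observing that when one pushes the cocharacter defining a proper parabolic of $\Sp_{2n}$ into $\GL_{2n+1}$, the resulting ``$H$-relevant'' parabolics of $\GL_{2n+1}$ are never maximal; hence every constant term of $E(\cdot,\varphi,\lambda)$ along an $H$-relevant parabolic vanishes, so Zydor's truncation $\Lambda^T_H E$ coincides identically with $E|_{[H]}$, forcing $E|_{[H]}\in\cS([H])$ (\cref{cor:Eisenstein_series_rapid_decrease}). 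In particular there \emph{are} no ``truncation corrections'' or cuspidal boundary terms to cancel against non-open orbits --- that entire anticipated difficulty dissolves. Your plan to track the interaction of regularization with the unfolding is not just extra work; it is solving a problem the setup does not pose.

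Second, you have misread $\langle f,f\rangle_{\mathrm{Pet}}$. In the paper it is \emph{defined} as $\int_{[\GL_{2n+1}]_{P,0}}\lvert\varphi(x)\rvert^2\,\rd x$, the inner product of the inducing section $\varphi$, not any (regularized) $L^2$-norm of the Eisenstein series itself. Consequently no Langlands--Maass--Selberg formula enters; one simply applies the Jacquet--Shalika factorization of $\langle\varphi,\varphi\rangle$ into local Whittaker inner products times the residues $L^*(1,\Pi_i,\Ad)$. This is also where the measure-normalization ratio $\Delta^*_G/(\Delta^*_H)^2=\zeta^*(1)\zeta(3)\cdots\zeta(2n+1)/(\zeta(2)\cdots\zeta(2n))$ comes from.

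Third, your mechanism for vanishing --- full Whittaker expansion over $N_{2n}(F)\backslash\GL_{2n}(F)$ and a claim that the generic character restricts nontrivially to $\delta^{-1}N_{2n}\delta\cap\Sp_{2n}$ for every $\delta$ --- is the Jacquet--Rallis route, which the paper notes works for honest cusp forms but is not what they use. The paper instead expands only along the mirabolic $\cP_{2n}$, observes there is a \emph{single} $\Sp_{2n}(F)$-orbit on $\cP_{2n}(F)\backslash\GL_{2n}(F)$, and then runs an induction via the Klingen-mirabolic lemma (\cref{lem:Klingen-mirabolic}), Fourier-expanding on the abelian quotient $U/U'$ of the Klingen-mirabolic unipotent. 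This packaging is essential because the proof of \cref{thm:main} must apply not only to cusp forms but also to $\theta$-cuspidal functions arising inside the pseudo-Eisenstein unfolding (the normal subgroups $S_\gamma\cdot U_\gamma$ in the orbit analysis are handled by \cref{mixed period}, which sits on top of the Klingen-mirabolic lemma). You would need an analogous robustness in your approach to handle the non-open-orbit terms for the Eisenstein case; the Whittaker-expansion argument as you state it does not obviously supply it.

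Finally, a minor point: the passage from pseudo-Eisenstein series to the actual Eisenstein series in the paper is via Paley--Wiener functions and a Fubini argument (\cref{thm:main}), not truncation; with absolute convergence in hand this is quite painless and is worth internalizing as the correct mechanism.
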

The reader should note the formal resemblance of \eqref{eq:main} with \eqref{eq:MWZ}. We explain the terms in \eqref{eq:main} in some detail.

\begin{itemize}
    \item Let $P$ be the standard parabolic subgroup of $\GL_{2n+1}$ with Levi $\GL_n \times \GL_{n+1}$. Then by $f \in \pi$ we mean $f$ is an Eisenstein series of the form $E(g,\varphi) := E(g,\varphi,0)$, where $\varphi \in \Ind_{P(\bA)}^{\GL_{2n+1}(\bA)} \Pi$ (see \S \ref{ssec:automorphic_forms} for more details). We will show in \S \ref{sec:convergence} that 
    \begin{equation*}
       \cP(E(\cdot,\varphi)) := \int_{[\Sp_{2n}]} E(h,\varphi) \rd h
    \end{equation*}
    is absolutely convergent. And we put
    \begin{equation*}
        \langle f,f \rangle_{\opn{Pet}} := \langle \varphi,\varphi \rangle_{\opn{Pet}} := \int_{[\GL_{2n+1}]_{P,0}} \lvert \varphi(x) \rvert^2 \rd x .
    \end{equation*}
    This explains the left hand side of \eqref{eq:main}.
    \item Let $\zeta(s) = \zeta_F(s)$ denote the completed Dedekind zeta function of $F$ and $\zeta^*(1)$ denotes the residue of $\zeta(s)$ at $s=1$.
    \item $L^*(1,\Pi,\opn{Ad})$ denotes the leading term of the Laurant expansion of $L^*(s,\Pi,\opn{Ad})$ at $s=1$. Similar for $L^*(1,\Pi,\hat{\rho}_0)$. Note that in this case, we have
    \begin{equation*}
        L^*(1,\Pi,\hat{\rho}_0) = L^*(1,\Pi_n,\opn{Ad}) L^*(1,\Pi_{n+1},\opn{Ad}) L(1,\Pi_n^{\vee} \times \Pi_{n+1}) L(1,\Pi_n \times \Pi_{n+1}^{\vee}),
    \end{equation*}
    and
    \begin{equation*}
        L^*(1,\Pi,\opn{Ad}) = L^*(1,\Pi_n,\opn{Ad}) L^*(1,\Pi_{n+1},\opn{Ad}).
    \end{equation*}
    \item $\cP^{\natural}_v(f_v)$ and $\langle f_v,f_v \rangle^{\natural}$ are local (normalized) version of the period and inner product respectively. They are defined in \S \ref{ssec:proof_of_intro_main} in terms of Whittaker model, with the properties that when everything is unramified, then $\cP^{\natural}_v(f_v) = \langle f_v, f_v \rangle^{\natural} = 1$.
\end{itemize}

Finally, we remark that the case $\Sp_{2n} \backslash \GL_{2n}$, commonly referred to as the \emph{symplectic period}, has been extensively studied by Jacquet–Rallis \cite{JR92} and Offen \cite{Offen06}. Its (expected) BZSV dual is the so-called \emph{Jacquet–Shalika period}.

\subsection{Geometric result}

A remarkable feature of conjectures in \cite{BZSV} is their parallelity over number fields and function fields, enabling predictions of results over number fields from parallel but sometimes easier results over function fields. Over function fields, beyond the numerical conjecture mentioned before, one can ask a matching between geometrizations of both sides of the identities in theorem \ref{thm:intro_main} under the conjectural geometric Langlands equivalence $\Shv_{\Nilp}(\Bun_G)\cong\IndCoh_{\Nilp}(\Loc_{\widehat{G}})$. We refer to \cite{arinkin2022stacklocalsystemsrestricted} for a precise formulation of this equivalence. 

A precise conjecture in this direction is formulated in \cite[\S 12]{BZSV}, which we briefly recall in the case that $M=T^*X$ and $X=G/H$ where $H\sub G$ is a connected reductive subgroup making $M$ hyperspherical (i.e. we are considering quadruple $(G,H,0,1)$): On the A-side, one considers map $\pi:\Bun_H\to\Bun_G$ and defines the \emph{period sheaf} \[\cP_X:=\pi_!\overline{\mathbb{Q}}_l\] (we work with constructible \'etale $\overline{\mathbb{Q}}_l$-complexes), which induces a natural functor \[\ev^{\Mir}(\cP_X\otimes-)\cong \Gamma_c(\pi^*(-)):\Shv_{\Nilp}(\Bun_G)\to\Vect\] where $\ev^{\Mir}:\Shv(\Bun_G)^{\otimes 2}\to \Vect$ is the evaluation map for the miraculous duality of $\Shv(\Bun_G)$ defined in \cite{AGKRRV2}. On the B-side, one can similarly define the \emph{$L$-sheaf} \[\cL_{\widehat{M}}\in\IndCoh(\Loc_{\widehat{G}})\] using the dual $\widehat{G}$-Hamiltonian space $\widehat{M}$ and similarly consider the functor \[\ev^{\mathrm{Serre}}(\cL_{\widehat{M}}\otimes -):\IndCoh_{\Nilp}(\Loc_{\widehat{G}})\to \Vect\] where $\ev^{\mathrm{Serre}}:\IndCoh(\Loc_{\widehat{G}})^{\otimes 2}\to\Vect$ is the evaluation map for the Grothendieck-Serre duality on $\Loc_{\widehat{G}}$. The conjecture in \cite[\S 12]{BZSV} predicts an isomorphism between functors $\Shv_{\Nilp}(\Bun_G)\cong\IndCoh_{\Nilp}(\Loc_{\widehat{G}}) \to \Vect$: \[\ev^{\Mir}(\cP_X\otimes-)\cong \ev^{\mathrm{Serre}}(\cL_{\widehat{M}}\otimes c^*(-))\] where $c:\Loc_{\widehat{G}}\to\Loc_{\widehat{G}}$ is induced by the Cartan involution on $\widehat{G}$.

In the appendix, we verify the conjecture above for pairs of groups $(G,H)=(\GL_{2n+1},\Sp_{2n})$ and $(G,H)=(\GL_{2n},\Sp_{2n})$ on the cuspidal part. More precisely, we prove the following:

\begin{theorem} \label{thm:intro_geo}
    For $(G,H)$ as above, for any cuspidal automorphic sheaf $\cF\in\Shv(\Bun_G)_{\cusp}$, one has \[\Gamma_c(\pi^*\cF)=0.\]
\end{theorem}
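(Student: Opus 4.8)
\Cref{thm:intro_geo} is the sheaf-theoretic shadow of the vanishing of symplectic-type periods on cusp forms: for $G=\GL_{2n}$ it geometrizes the theorem of Jacquet--Rallis and Offen \cite{JR92,Offen06} that cuspidal automorphic forms of $\GL_{2n}$ have vanishing symplectic period, and for $G=\GL_{2n+1}$ it geometrizes part~(1) of \Cref{thm:intro_main}. In both cases the plan is to reduce the vanishing of $\Gamma_c(\pi^*\cF)$ to the vanishing of geometric constant terms $\CT_P\cF$ along proper parabolics $P\subset G$ --- which holds because $\cF$ is cuspidal --- by exploiting the parabolic structure of $\Bun_{\Sp_{2n}}$ together with the contraction principle in the relevant sheaf categories (as developed in \cite{AGKRRV2}). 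Throughout, a cuspidal $\cF$ has quasi-compact support and is annihilated by both $\CT^*_P$ and $\CT^!_P$ for every proper $P$.

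\textbf{The case $G=\GL_{2n+1}$.} Here $\Sp_{2n}\hookrightarrow\GL_{2n}\hookrightarrow\GL_{2n+1}$, the first map the standard inclusion and the second the Levi embedding $g\mapsto\mathrm{diag}(g,1)$, so $\pi$ factors as $\Bun_{\Sp_{2n}}\xrightarrow{\pi_0}\Bun_{\GL_{2n}}\xrightarrow{\iota}\Bun_{\GL_{2n+1}}$ with $\pi_0$ forgetting the symplectic form and $\iota(W)=W\oplus\cO_X$. I would prove the stronger statement $\iota^*\cF=0$, which yields $\pi^*\cF=0$ and hence $\Gamma_c(\pi^*\cF)=0$. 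Let $P\subset\GL_{2n+1}$ be the parabolic with Levi $M=\GL_{2n}\times\GL_1$, with projections $\mathfrak p\colon\Bun_P\to\Bun_{\GL_{2n+1}}$, $\mathfrak q\colon\Bun_P\to\Bun_M$ and zero section $s\colon\Bun_M\hookrightarrow\Bun_P$; then $\iota=\mathfrak p\circ s\circ(W\mapsto(W,\cO_X))$. The $\mathbb G_m$-action on $\Bun_P$ rescaling the extension class contracts $\Bun_P$ onto $\Bun_M$ along $\mathfrak q$, and $\mathfrak p$ is invariant for it, because rescaling an extension class by a unit produces an isomorphic bundle; hence $\mathfrak p^*\cF$ is monodromic and the contraction lemma gives $s^*\mathfrak p^*\cF\cong\mathfrak q_*\mathfrak p^*\cF$, which up to shift and twist is $\CT^!_P\cF=0$. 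Thus $\iota^*\cF=0$.

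\textbf{The case $G=\GL_{2n}$.} This is the essential case: $\pi^*\cF$ does not vanish --- the image of $\pi$ meets the semistable locus, where $\cF$ lives --- so the vanishing of $\Gamma_c$ is a genuine cohomological cancellation, and I would carry out a geometric analogue of the Jacquet--Rallis unfolding. Let $P_{\Sp}\subset\Sp_{2n}$ be the Siegel parabolic (Levi $\GL_n$, unipotent radical $\cong\Sym^2$ of the standard representation), with projections $\mathfrak p_{\Sp}\colon\Bun_{P_{\Sp}}\to\Bun_{\Sp_{2n}}$ and $\mathfrak q_{\Sp}\colon\Bun_{P_{\Sp}}\to\Bun_{\GL_n}$. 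A $P_{\Sp}$-reduction of a symplectic bundle is a Lagrangian subbundle $N$, and $\mathfrak q_{\Sp}$ exhibits $\Bun_{P_{\Sp}}$ as the total space of the vector-bundle stack over $\Bun_{\GL_n}$ with fibre the symmetric extension classes $\Ext^1_{\mathrm{sym}}(N^\vee,N)$; crucially no open condition enters, since a symplectic form vanishing on a Lagrangian subbundle $N$ and inducing a perfect pairing $N\times(W/N)\to\cO_X$ is automatically nondegenerate. First, pass to Drinfeld's relative compactification $\bar{\mathfrak p}_{\Sp}\colon\overline{\Bun}_{P_{\Sp}}\to\Bun_{\Sp_{2n}}$, which is proper; by the decomposition theorem (after passing to a resolution if needed, and since the generic fibre is non-empty --- every symplectic bundle has a Lagrangian subbundle) $\overline{\mathbb Q}_\ell$ is a direct summand of $\bar{\mathfrak p}_{\Sp,*}\overline{\mathbb Q}_\ell$, so by the projection formula $\Gamma_c(\Bun_{\Sp_{2n}},\pi^*\cF)$ is a retract of $\Gamma_c(\overline{\Bun}_{P_{\Sp}},\bar{\mathfrak p}_{\Sp}^*\pi^*\cF)$, and it suffices to show the latter vanishes stratum by stratum. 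On the open stratum $\Bun_{P_{\Sp}}$: a Lagrangian subbundle $N\subset W$ is also a point of $\Bun_Q$ for $Q\subset\GL_{2n}$ the Siegel parabolic (Levi $\GL_n\times\GL_n$), and $\mathfrak p_{\Sp}^*\pi^*\cF$ is the pullback along $\nu\colon\Bun_{P_{\Sp}}\to\Bun_Q$ of $\mathfrak p_Q^*\cF$; the map $\nu$ factors through the closed substack $\Bun_Q^{\mathrm{sd}}:=\Bun_Q\times_{\Bun_{\GL_n}\times\Bun_{\GL_n},\,j}\Bun_{\GL_n}$ (where $j(N)=(N,N^\vee)$), inside which $\Bun_{P_{\Sp}}$ sits as the sub-vector-bundle stack of symmetric extension classes, so pullback along this inclusion is $!$- and $*$-compatible up to shift and twist. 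Applying the contraction lemma once on $\Bun_{P_{\Sp}}$ and once on $\Bun_Q^{\mathrm{sd}}$ --- for the extension-rescaling $\mathbb G_m$, which contracts both onto their common split locus $\Bun_{\GL_n}$, and for which $\mathfrak p_Q^*\cF$ is monodromic --- together with proper base change along the Cartesian square defining $\Bun_Q^{\mathrm{sd}}$, identifies $\mathfrak q_{\Sp,!}\mathfrak p_{\Sp}^*\pi^*\cF$, up to shift and twist, with $j^*\CT_Q\cF$; since $Q$ is proper and $\cF$ cuspidal this is $0$, whence $\Gamma_c(\Bun_{P_{\Sp}},\mathfrak p_{\Sp}^*\pi^*\cF)=\Gamma_c(\Bun_{\GL_n},\mathfrak q_{\Sp,!}\mathfrak p_{\Sp}^*\pi^*\cF)=0$. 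A boundary stratum of $\overline{\Bun}_{P_{\Sp}}$ fibres, over a configuration space recording the defect divisor of the degenerate Lagrangian, over a Hecke-modified Siegel-parabolic bundle over $\Bun_{\GL_n}$; the same computation applies, the configuration-space and Hecke-modification factors being annihilated once $j^*\CT_Q\cF=0$ (constant terms commute with Hecke functors via geometric Satake). The stratification spectral sequence --- with only finitely many strata contributing, by quasi-compactness of $\mathrm{supp}(\cF)$ --- then gives $\Gamma_c(\overline{\Bun}_{P_{\Sp}},\bar{\mathfrak p}_{\Sp}^*\pi^*\cF)=0$, hence $\Gamma_c(\Bun_{\Sp_{2n}},\pi^*\cF)=0$.

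\textbf{Expected main obstacle.} The crux is exactly what separates the $\GL_{2n}$ case from the $\GL_{2n+1}$ case: $\pi\colon\Bun_{\Sp_{2n}}\to\Bun_{\GL_{2n}}$ is neither an immersion nor, even after restriction, a vector-bundle-stack fibration over a constant-term stack, because ``nondegenerate symplectic form'' is an open --- not a vector-bundle-stack --- condition; there is no one-step reduction to a constant term as for $\GL_{2n+1}$. Circumventing this forces the Drinfeld compactification and a precise analysis of its boundary strata (their explicit description, the attached Hecke modifications, and the verification that none of them yields the full constant term $\CT_G=\mathrm{id}$); I expect this bookkeeping --- the geometric unfolding proper --- to be the main technical burden, along with the decomposition-theorem step on the (possibly singular) compactification. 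A secondary point is that the projection formula, proper base change and the contraction lemma must all be deployed for non-quasi-compact algebraic stacks and ind-constructible $\overline{\mathbb Q}_\ell$-sheaf categories; this is handled by the formalism of \cite{AGKRRV2}, and the quasi-compactness of the support of a cuspidal sheaf keeps the argument within its scope.
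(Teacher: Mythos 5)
Your $\GL_{2n+1}$ argument has a gap precisely where you conclude $\iota^*\cF=0$. The contraction principle for the $\mathbb G_m$-action rescaling extension classes gives $s^*\mathfrak p^*\cF\simeq\mathfrak q_*\mathfrak p^*\cF$ for monodromic $\mathfrak p^*\cF$, but $\mathfrak q_*\mathfrak p^*$ is not a constant-term functor. Cuspidality gives $\mathfrak q_!\mathfrak p^*\cF=0$, and second adjointness (Braden, plus $P\sim P^-$ in $\GL$) gives $\mathfrak q_*\mathfrak p^!\cF=0$; but $\mathfrak p\colon\Bun_P\to\Bun_{2n+1}$ is \emph{not smooth} (its fibre over $\cE$ is the space of rank-$2n$ subbundles of $\cE$, of varying dimension), so $\mathfrak p^*\not\simeq\mathfrak p^![\mathrm{shift}]$, and the mixed functor $\mathfrak q_*\mathfrak p^*$ is not known to vanish on cuspidal objects. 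The contraction lemma does give $s^!\mathfrak p^*\cF\simeq\mathfrak q_!\mathfrak p^*\cF=\CT_P\cF=0$, i.e. the $!$-costalk along the zero section vanishes --- that is exactly the constant-term vanishing --- but not the $*$-restriction you need. Indeed, $\iota^*\cF=0$ is a much stronger assertion (a cuspidal sheaf vanishes on the whole locus $\{W\oplus\cO\}$) than cuspidality, which says that a Fourier coefficient summed over the extension direction vanishes, not that the \emph{value} at the split extension does; I do not believe $\iota^*\cF=0$ holds.

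For $\GL_{2n}$, your Drinfeld-compactification strategy is a different idea from the paper's and is not obviously fatal, but it is far from a proof: $\overline{\Bun}_{P_{\Sp}}$ is singular, so extracting $\overline{\mathbb Q}_\ell$ as a direct summand of $\bar{\mathfrak p}_{\Sp,*}\overline{\mathbb Q}_\ell$ requires real work (the decomposition theorem yields $\mathrm{IC}$'s of strata of the target, not automatically the constant sheaf on all of $\Bun_{\Sp_{2n}}$), the boundary-stratum computation and the identification of the open stratum with $j^*\CT_Q\cF$ are only sketched, and that bookkeeping is exactly where the content lives. The paper instead geometrizes the \emph{Klingen--mirabolic unfolding}: it introduces a geometric Klingen--mirabolic period on $\Shv(\Bun_{P_{1,2n,1}})$, proves its vanishing on $\theta$-cuspidal objects by induction on $n$ via a Fourier transform along the extension vector bundle and passage to the locus of nonzero covectors, and reduces both the $\GL_{2n+1}$ case and the sectioned $\GL_{2n}$ case to this lemma. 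The sectionless $\GL_{2n}$ case is finished with a degree-shift trick: twisting by $\cL$ of arbitrarily negative degree makes the forgetful map $\Bun_{\Sp_{2n}}^{\bA^{2n}_{\cL}\setminus\{0\}}\to\Bun_{\Sp_{2n}}$ a vector bundle of arbitrary rank $m'$ minus its zero section over the quasi-compact $\mathrm{supp}(\pi_{2n}^*\cF)$, giving $\Gamma_c(\pi_{2n}^*\cF)[-m']\simeq\Gamma_c(\cL_U\otimes\pi_{2n}^*\cF)$, whence $\Gamma_c(\pi_{2n}^*\cF)$ is infinitely connective and so zero. This route avoids both the $*$ vs.\ $!$ issue in your first argument and the compactification and stratification bookkeeping in your second.
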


\begin{remark}
    Such type of geometric results in many other examples are considered in series works of Lysenko (an incomplete list includes \cite{lysenko2002local}\cite{lysenko2008geometric}\cite{Lysenko_2021}) and has their potential applications to arithmetic period conjectures as discusses in \cite{LW}.
\end{remark}

\subsection{Structure of the article}

After introducing notations and preliminaries in \S 2. We study the the numerical conjecture in the main part of the article and geometric conjecture in the appendix.

In \S \ref{sec:convergence}, we show that the periods of the Eisenstein series we considered are absolutely convergent. We treat the function field case and the number field case separately. In \S \ref{sec:cuspidal} we prove that the $\Sp_2n$ periods of cuspidal automorphic forms, as well as certain kinds of cuspidal Eisenstein series, on $\GL_{2n+1}$ vanish. In \S \ref{sec:orbits}, we do some linear algebra to classify $\Sp_{2n}$ orbits on $\GL_{2n+1}/P_{n,n+1}$, where $P_{n,n+1}$ is the maximal parabolic subgroup of $\GL_{2n+1}$ with type $(n,n+1)$. In \S \ref{sec:proof}, we use the results in \S \ref{sec:cuspidal} and \S \ref{sec:orbits} to prove our main numerical results.

In the appendix, we geometrize the results in \S \ref{sec:cuspidal} to get the geometric conjecture.

\subsection{Acknowledgement}

We are grateful to Chen Wan for the kind suggestion of this article and for many helpful discussions. We thank Zeyu Wang for generously contributing the appendix to our work and for carefully proofreading the manuscript. We thank Paul Boisseau, Dihua Jiang, Yannis Sakellaridis, Yiyang Wang, Hang Xue, Wei Zhang for helpful discussions and suggestions. 

\section{Preliminaries and Notations}
\label{sec:prelim}

\subsection{General notations}

We list some general notations:

\begin{itemize}
     \item For a matrix $A$, we write ${}^tA$ for the transpose of $A$. 

    \item Let $F$ be a global field and let $v$ be a place of $F$, we write $F_v$ for the completion of $F$ at the place $v$. In general, if $\tS$ is a finite set of places of $F$, we write $F_\tS := \prod_{v \in \tS} F_v$ and $\bA^\tS_F$ for the restricted product $\prod_{v \not \in \tS}' F_v$. We also write $F_\infty := F \otimes_\Q \R$. 

    \item Let $f,g$ be two positive functions on a set $X$, we write $f \ll g$ if there exists $C>0$ such that $f(x) \le Cg(x)$ for any $x \in X$. 
\end{itemize}

\subsection{Algebraic groups and their ad\`{e}lic points}

In this subsection, we let $G$ be a connected linear algebraic group over a global field $F$. We denote by $\bA := \bA_F$ and $[G] := G(F) \backslash G(\bA)$ the ad\`{e}lic quotient of $G$.

\subsubsection{Tamagwa measure} \label{ssec:Tamagawa}

We fix the Tamagawa measure $dg$ on $G(\bA)$, and thus on $[G]$ as described in \cite[section 2.3]{BPCZ}. To fix the notations, we briefly recall the definition. Fix an additive character $\psi:F \backslash \bA_F \to \C^\times$. Write $\psi$ as $\psi = \prod \psi_v$. For each place $v$ of $F$, $\psi_v$ determines the self-dual measure on $F_v$. Let $\omega$ be an $F$-rational $G$-invariant top differential form on $G$. For each place $v$, $\lvert \omega \rvert$ gives a measure $d^* g_v$ on $G(F_v)$. Moreover, according to the results of Gross \cite{Gross97}, there exists a global Artin-Tate $L$-function $L_G(s)$ such that 
\begin{equation*}
    d^*g_v(G(\cO_v)) = L_{G,v}(0)
\end{equation*}
for almost all places $v$. We denote by
\begin{equation*}
    \Delta_{G,v} := L_{G,v}(0)
\end{equation*}
and let $\Delta_G^*$ denote the leading coefficient of the Laurent expansion of $L_G(s)$ at $s=0$.
We then put $dg_v := \Delta_{G,v} d^* g_v$ and put the global Tamagawa measure by
\begin{equation*}
    dg = (\Delta_G^*)^{-1} \prod_v d g_v.
\end{equation*}
The measure is independent of the choice of $\omega$.

When $G=\GL_n$, we can take $\omega = (\wedge dg_{ij})/(\det g)^n$ and we have $L_G(s) = \zeta_F(s+1) \cdots \zeta_F(s+n)$, where $\zeta_F$ denote the (completed) Dedekind zeta function of $F$. 

When $G=\Sp_{2n}$, we have $L_G(s) = \zeta_F(s+2) \cdots \zeta_F(s+2n)$.

\subsubsection{Parabolic subgroups}
 We assume that $G$ is connected and reductive for the remainder of this subsection. Fix a maximal split torus $A_0$ of $G$ and a minimal parabolic subgroup $P_0$ containing $A_0$. A parabolic subgroup $P$ of $G$ is called \emph{standard} if $P \supset P_0$, and \emph{semi-standard} if $P \supset A_0$. For a semi-standard parabolic subgroup $P$, we denote by $P=M_PN_P$ the standard Levi decomposition of $P$. 

We denote by $\cF := \cF^G$ the set of semi-standard parabolic subgroups of $G$. For a subgroup $S$ of $G$, we denote by $\cF(S) = \cF^G(S)$ the set of semi-standard parabolic subgroups containing $S$. For example, $\cF(A_0)$ (resp. $\cF(P_0)$) is the set of semi-standard (resp. standard) parabolic subgroups.

Recall that for any cocharacter $\lambda:\bG_m \to A_0$, the dynamical method associates a semi-standard parabolic subgroup $P(\lambda)$ of $G$:
    \begin{equation} \label{eq:dynamical}
        P(\lambda) := \{ g \in G \mid \lim_{t \to 0} \lambda(t)g \lambda(t)^{-1} \text{ exists} \},
    \end{equation}
    and all semi-standard parabolic subgroups are of the form $P(\lambda)$ for some $\lambda \in X_*(A_0)$.

    Let $W$ be the Weyl group of $(G, A_0)$, that is, the quotient by $M_0(F)$ of the normalizer of $A_0$ in $G(F)$. For standard parabolic subgroups $P,Q$, denote by
    \begin{equation*}
        {}_Q W {}_P := \{ w \in W \mid M_P \cap w^{-1}P_0w = M_P \cap P_0, \quad M_Q \cap wP_0w^{-1} = M_Q \cap P_0 \}.
    \end{equation*}
    It forms a representative of the double coset $W^Q \backslash W/W^P$. For $w \in {}_Q W_P$, $M_P \cap w^{-1}M_Qw$ is the Levi factor of the standard parabolic subgroup $P_w = (M_P \cap w^{-1}Qw)N_P$. In the same way, $M_Q \cap wM_Pw^{-1}$ is the Levi factor of the standard parabolic subgroup $Q_w = (L \cap wPw^{-1})N_Q$. We have $P_w \subset P$, $Q_w \subset Q$, moreover $P_w$ and $Q_w$ are associate. We then write
    \begin{equation*}
        W(P;Q) = \{ w \in {}_Q W_P \mid M_P \subset w^{-1}M_Qw \}.
    \end{equation*}
    
    For a semi-standard parabolic subgroup $P$ of $G$, define
        \[
            \fa_P^* := X^*(P) \otimes_\Z \R ,\quad \fa_P := \Hom_\Z(X^*(P),\R).
        \]
    We endow $\fa_P$ with the Haar measure such that the lattice $\Hom(X^*(P),\Z)$ has covolume 1.
        
    Let $\fa_0 := \fa_{P_0}$ and $\fa_0^* := \fa_{P_0}^*$. 
        \[
            \epsilon_P := (-1)^{\dim \fa_P - \dim \fa_G}.
        \]
    
    For any semi-standard parabolic $P$, let $\widehat{\tau}_P$ be the characteristic function of a cone on $\fa_P$ defined in \cite[\S 5]{Arthur78}

    \subsubsection{Norms}

    Now we assume that $F$ is a number field. We write $\fg_\infty$ for the Lie algebra of the Lie group $G(F_\infty)$ and write $\cU(\fg_\infty)$ for the universal enveloping algebra of $\fg_\infty$.
    
     For a semi-standard parabolic subgroup $P$ of $G$, we put
        \[
            [G]_P := N_P(\bA)M_P(F) \backslash G(\bA).
        \]
    We fix a norm $\| \cdot \|$ on $G(\bA)$ as in ~\cite[Appendix A]{BP21}. It induces a norm on $[G]_P$ by 
        \[
            \| g \|_P := \inf_{\gamma \in N_P(\bA)M_P(F)} \|\gamma g\|.
        \]
    There is a notion of weight functions on $[G]_P$ described in ~\cite[\S 2.4.3]{BPCZ}. In particular, for any $\alpha \in \fa_0^*$, there is a weight $d_{P,\alpha}$ on $[G]_P$.

    \subsubsection{\texorpdfstring{The map $H_P$}{The map H_P}}

     We denote by $A_G^\infty$ the neutral component of real points of the maximal split central torus of $\mathrm{Res}_{F/\Q} G$. For a semi-standard parabolic subgroup $P$ of $G$, let $A_P^\infty := A_{M_P}^\infty$. We also define $A_0^\infty := A_{P_0}^\infty = A_{M_0}^\infty$.

      Let $\delta_P: P(\bA) \to \R_{>0}$ denote the modular function of $P(\bA)$.

    We fix a maximal compact subgroup $K$ of $G(\bA)$, which is in good position with $P_0$. Hence, we have the Iwasawa decomposition $G(\bA)=P(\bA)K$ for all semi-standard parabolic subgroups $P$ of $G$. The map
        \[
            H_P: P(\bA) \to \fa_P, \, p \mapsto \left( \chi \mapsto \log \lvert \chi(g) \rvert  \right), \quad \chi \in X^*(P),
        \]
    extends to $G(\bA)$, by requiring it trivial on $K$. The map $H_P$ induces an isomorphism $A_P^\infty \cong \fa_P$, we endow $A_P^\infty$ with the Haar measure such that this isomorphism is measure-preserving. 

    Let $[G]_{P,0} := N(\bA)M(F) A_P^{\infty} \backslash G(\bA)$. For a measurable function $\varphi: [G]_P \to \C$ such that $\varphi(ag)=\delta_P(a)^{\frac 12} \varphi(g)$, the integral
    \begin{equation*}
        \int_{[G]_{P,0}} \lvert \varphi(x) \rvert \rd x
    \end{equation*}
    makes sense, and when it is finite, the integral
    \begin{equation*}
        \int_{[G]_{P,0}} \varphi(x) \rd x 
    \end{equation*}
    makes sense.

     \subsubsection{Siegel sets}

    By a Siegel set $\fs^P$ of $[G]_P$ we mean a subset of
$[G]_P$ of the form
    \[
    \fs^P = \omega_0 \{ a \in A_0^\infty \mid
    \langle \alpha, H_0(a) - T_- \rangle \geq 0, \ \alpha \in \Delta_0^P \} K,
    \]
where $T_- \in \fa_0$, and such that $G(\bA) = P(F)\fs^P$. We
assume that, for different parabolic subgroups of $G$, their Siegel sets
are defined by the same $T_-$. In particular if $P \subset Q$ then $\fs^P
\supset \fs^Q$.

     \subsubsection{Truncation parameters}
          
     Fix a norm $\| \cdot \|$ on $\fa_0$. We say $T \in \fa_0$ is \emph{sufficiently positive}, if
    there exists $C>0$ and $\varepsilon >0$ such that
    \[
            \inf_{\alpha \in \Delta_0} \langle \alpha,T \rangle \ge \max\{ C,\varepsilon \|T\| \}.
        \]

    \subsubsection{Paley-Wiener spaces}

    For a semi-standard parabolic $P$, let $\opn{PW}(i\fa_P^*)$ denote the space of Paley-Wiener functions on $i\fa_P^*$. More precisely, it consists of entire function $f$ on $\fa_{P,\C}^*$ with the following condition:
    \begin{num}
       \item \label{eq:Paley-Wiener} There exists $A>0$ such that for all integer $N>0$,
        \begin{equation*}
            \lvert f(\lambda) \rvert \ll (1+\lvert  \lambda \rvert)^{-N} e^{A \lvert \opn{Re}(\lambda) \rvert}
        \end{equation*}
    \end{num}
    
    The Fourier transform:
    \begin{equation*}
        \opn{PW}(i\fa_P^*) \ni f \mapsto \widehat{f}(X) = \int_{i\fa_P^*} f(\lambda) e^{\langle \lambda,X \rangle} \rd \lambda
    \end{equation*}
    defines a bijection between $\opn{PW}(i\fa_P^*)$ and $C_c^\infty(\fa_P)$.

\subsection{Function spaces} \label{subsec:function space}
    Let $F$ be a number field and let $G$ be an algebraic group over $F$. Let $P$ be a parabolic subgroup of $G$. There are various function spaces on $[G]_P$ which we briefly recall below. The reader may consult ~\cite[\S 2.5]{BPCZ} for more details.

    A function $f:G(\bA) \to \C$ is called \emph{smooth}, if it is right $J$-invariant for some open compact subgroup $J \subset G(\bA_f)$ and for any $g_f \in G(\bA_f)$, the function $g_\infty \mapsto f(g_f g_\infty)$ is $C^\infty$. A function on $[G]_P$ is called smooth if it pullbacks to a smooth function on $G(\bA)$.

    Let $\cS([G]_P)$ be the space of \emph{Schwartz functions} on $[G]_P$. It is the union of $\cS([G]_P,J)$ for open compact subgroup $J \subset G(\bA_f)$. Where $\cS([G]_P,J)$ is the space of smooth functions on $[G]_P$ which are right $J$ invariant and 
    \[
        \| f \|_{X,N} := \sup_{x \in [G]_P} \lvert R(X)f(x) 
        \rvert \| x \|_P^N < \infty
    \]
    for any $X \in \cU(\fg_{\infty})$ and $N>0$. The vector space $\cS([G]_P,J)$ is naturally a Fr\'{e}chet space and $\cS([G]_P)$ is naturally a strict LF space.

    Let $\cS^0([G]_P)$ be the space of measurable function $f$ on $[G]_P$ such that
    \begin{equation} \label{eq:norm_infty_N}
       \| f\|_{\infty,N} := \sup_{x \in [G]_P} \lvert f(x) \rvert \| x \|^{N}_P < \infty
    \end{equation}
    for any $N>0$. It is naturally a Fr\'{e}chet space.

Let $\cT([G]_P)$ be the function of \emph{uniform moderate growth} on $[G]_P$. It is the union of $\cT_{-N}([G]_P,J)$, where $N>0$ and $J \subset G(\bA_f)$ is open compact subgroup. $\cT_{-N}([G]_P,J)$ consists of smooth functions $f$ on $[G]_P$ which are right $J$-invariant and
\begin{equation*}
    \| f \|_{X,-N} := \sup_{x \in [G]_P} \lvert R(X)f(x) \rvert \| x \|_P^{-N} < \infty
\end{equation*}
for any $X \in \cU(\fg_\infty)$. The vector space $\cT_{-N}([G]_P,J)$ is naturally a Fr\'{e}chet space and $\cT([G]_P)$ then carries the induces (non-strict) LF topology.

For $P \subset Q$, we have the following \emph{constant term} map
\begin{equation*}
    \cT([G]_Q) \ni \varphi \mapsto \varphi_P := \left( g \mapsto \int_{[N_P]} \varphi(ng) \rd n \right) \in \cT([G]_P).
\end{equation*}

Let $\cT_{\opn{cusp}}([G]_Q)$ be the closed subspace of $\cT([G]_Q)$ consists of $\varphi$ such that $\varphi_P = 0$ for any $P \subset Q$. It is well known that for $\varphi \in \cT_{\opn{cusp}}([G])$, we have the following \emph{rapid decay property of cusp form}:
\begin{num} 
    \item \label{eq:cusp_form_rapid_decay} For any $N>0$ and any $X \in \cU(\fg_{\infty})$, we have $\sup_{x \in [G]^1} \lvert R(X)\varphi(x) \rvert \| x \|_G^N < \infty$. 
\end{num}

For standard parabolic subgroups $P \subset Q$ , the pseudo-Eisenstein series $E := E_P^Q$ defines a map
\begin{equation*}
    E_P^G: \cS([G]_P) \to \cS([G]_Q), \quad \varphi \mapsto \sum_{\gamma \in P(F) \backslash Q(F)} \varphi(\gamma g).
\end{equation*}
If $P$, $Q$ be two parabolic subgroups of $G$, for $\varphi \in \cS_{\opn{cusp}}([G]_P)$, we have the formula ~\cite[\S II.1.7]{MW95}
\begin{equation} \label{eq:constant_term_pseudo}
    E_Q(g,\varphi) = \sum_{w \in W(P;Q)} E^Q(g,M(w)\varphi),
\end{equation}
where $M(w)$ denotes the intertwining operator:
\begin{equation*}
    M(w): \cS([G]_{P_w}) \to \cS([G]_{Q_w}), M(w)\varphi(g) = \int_{N_{P_w}(\bA) \cap wN_Pw^{-1}(\bA) \backslash N_{P_w}(\bA)} \varphi(w^{-1}ng) \rd n.
\end{equation*}

\subsection{Automorphic forms and Eisenstein series}
\label{ssec:automorphic_forms}

\subsubsection{Automorphic forms}

For a semi-standard parabolic subgroup $P$ of $G$, we denote by $\cA_P(G)$ the \emph{automorphic forms} on $[G]_P$, it consists of $Z(\fg_\infty)$ elements in $\cT([G]_P)$ in the number field case, and $\cA_P(G) = \cT([G]_P)$ in the function field case. When $P=G$, we denote by $\cA(G) := \cA_G(G)$.

Let $\cA_{P,\mathrm{cusp}}(G) := \cA_P(G) \cap \cT_{\mathrm{cusp}}([G]_P)$ be the subspace of \emph{cusp forms}. Then spaces $\cA_P(G)$ and $\cA_{P,\mathrm{cusp}}(G)$ carry natural LF topologies as described in \cite[\S 2.7.1]{BPCZ}.

By a \emph{cuspidal automorphic representation} $\pi$ of $M_P(\bA)$, we mean a topologically irreducible subrepresentation of $\cA_{\mathrm{cusp}}(M_P)$. For $\lambda \in \fa_{P,\C}^*$, let $\pi_\lambda$ be the space of functions
\begin{equation*}
    [M_P] \ni m \mapsto e^{\langle \lambda,H_P(m) \rangle} \varphi(m), \quad \varphi \in \pi.
\end{equation*}

Let $\pi$ be a cuspidal automorphic representation of $M_P(\bA)$ such that the central character is trivial on $A_{P}^\infty$. We denote by $\opn{Ind}_{P(\bA)}^{G(\bA)}$ the \emph{normalized smooth induction} of $\pi$, realizes as a subspace of $\cA_P(G)$ consisting of elements $\varphi \in \cA_P(G)$ such that for any $g \in G(\bA)$,
\begin{equation*}
    [M_P] \ni m \mapsto e^{\langle -\rho_P, H_P(m) \rangle} \varphi(mg)
\end{equation*}
 belongs to $\pi$. Similarly, let $\cA_{P,\pi}$ be the subspace of $\varphi_P(G)$ such that for any $g \in G(\bA)$,
\begin{equation*}
    [M_P] \ni m \mapsto e^{\langle -\rho_P, H_P(m) \rangle} \varphi(mg)
\end{equation*}
 belongs to the $\pi$-isotypic part of $\cA_{\mathrm{cusp}}(M_P)$.

 Let $G=\GL_n$ and $P=P_{n_1,n_2,\cdots,n_k}$ be the standard parabolic with Levi $\prod_{i=1}^{n_i} \GL_{n_i}$. If a cuspidal automorphic representation $\pi$ of $M_P$ (with central character trivial on $A_P^\infty$)  is of the form $\pi_1 \boxtimes \pi_2 \cdots \boxtimes \pi_k$, where $\pi_i$ is a cuspidal automorphic representation of $\GL_{n_i}(\bA)$, we will also write
    \begin{equation*}
        \pi_1 \boxplus \cdots \boxplus \pi_k := \opn{Ind}_{P(\bA)}^{\GL_n(\bA)} \pi = \cA_{P,\pi}.
    \end{equation*}

\subsubsection{Eisenstein series}

Let $P$ be a semi-standard parabolic subgroup of $G$. For any $\varphi \in \cA_P(G)$, $g \in G(\bA)$ and $\lambda \in \fa_{P,\C}^*$, the \emph{Eisenstein series} is defined by
\begin{equation*}
    E(g,\varphi,\lambda) := \sum_{\gamma \in P(F) \backslash G(F)} \varphi_{\lambda}(\gamma g),
\end{equation*}
where
\begin{equation*}
    \varphi_{\lambda}(g) = \varphi(g) e^{\langle \lambda, H_P(g) \rangle }.
\end{equation*}

By the result of \cite{Langlands76}, \cite{BL24}, the sum defining $E(g,\varphi,\lambda)$ is convergent when $\opn{Re}(\lambda)$ is sufficiently positive, and admits meromorphic continuation to $\fa_{P,\C}^*$.

Let $\pi$ be a cuspidal automorphic representation of $M_P(\bA)$ with trivial central character on $A_P^\infty$. We say $\lambda$ is regular for $\pi$, if $E(g,\varphi,\lambda)$ is holomorphic at $\lambda$ for every $\varphi \in \cA_{P,\pi}$ and $g \in [G]$. It is known that for any such $\pi$, any $\lambda \in i \fa_P^*$ is regular for $\pi$.

Let $P,Q$ be two standard parabolic subgroups of $G$. For $w \in W(P,Q)$ and $\lambda \in \fa_{P,\C}^*$. The \emph{intertwining operator} is defined by
\begin{equation*}
    M(w,\lambda): \cA_P(F) \ni \varphi \mapsto M(w,\varphi)(g) := e^{\langle -w\lambda, H_Q(g) \rangle} \int_{N_Q \cap wN_Pw^{-1}(\bA) \backslash N_Q(\bA)} e^{\langle \lambda,H_P(w^{-1}\lambda g) \rangle} \varphi(w^{-1}ng).
\end{equation*}

By the result of \cite{Langlands76}, \cite{BL24}, the intertwining operator is convergent when $\opn{Re}(\lambda)$ is sufficiently positive and has meromorphic continuation to $\fa_{P,\C}^*$.

For $\varphi \in \cA_{P,\mathrm{cusp}}(G)$, we have the constant term formula analogous to \eqref{eq:constant_term_pseudo}:
\begin{equation} \label{eq:constant_term}
    E_Q(g,\varphi,\lambda) = \sum_{w \in W(P;Q)} E^Q(g,M(w)\varphi,w\lambda).
\end{equation}

\subsection{\texorpdfstring{$\Sp_{2n} \subset \GL_{2n+1}$}{Sp_{2n} in GL_{2n+1}}}

Throughout the remaining part of the article, we fix an integer $n$ and let $G=\GL_{2n+1}$ be the general linear group acting on $F^{2n+1}$. Identify $F^{2n}$ with the subspace of $F^{2n+1}$ with the last coordinate equal to $0$. Then we can identify $\GL_{2n}$ with a subgroup of $G$ at the top-left corner. Let $J := J_n$ denote the following matrix
\[
    J = \begin{pmatrix}
 &  &  &  & 1 \\
 &  &  & 1 &  \\
 &  & \cdots &  &  \\
 & 1 &  &  &  \\
1 &  &  &  & 
\end{pmatrix}.
\]

We define an involution $\theta$ on $\GL_{2n}$ by
\begin{equation}\label{involution}
    \theta(g)=\begin{pmatrix}
 & -J \\
J & 
\end{pmatrix} {}^{t}g^{-1}\begin{pmatrix}
 & J \\
-J & 
\end{pmatrix}, \quad g \in \GL_{2n}.
\end{equation}
Let $H := \Sp_{2n}$ denote the symplectic group, that is, the group of fixed points of $\theta$. We will make the following convention:
\begin{num}
    \item \label{eq:group_convention} For a subgroup $S$ of $G$, we will always denote by $S'$ the intersection $S \cap H$.
\end{num}

Fix the standard maximal split diagonal torus $A_0$ of $G$ and $A_0'$ of $H$ and fix upper triangular Borel subgroups $B'$ of $H$ and $B$ of $G$ respectively.

We will denote by $\cF := \cF^G$ (resp. $\cF' := \cF^H$) be set of semi-standard parabolic subgroups of $G$ and $H$ respectively. 

For positive integers $a_1, \cdots, a_k$, we write $P_{a_1,\cdots,a_k}$ for the standard parabolic subgroup of $\GL_{a_1 + \cdots +a_k}$ of  type $(a_1,\cdots,a_k)$, that is, the parabolic subgroup with Levi subgroup $\GL_{a_1} \times \cdots  \times \GL_{a_k}$.






\section{Absolute convergence}
\label{sec:convergence}

In this section, we show that cuspidal Eisenstein series associated to maximal parabolic subgroups of $\GL_{2n+1}$ are of rapid decay on $\Sp_{2n}$. Indeed they are compactly supported in the function field case. In particular, their $\Sp_{2n}$-periods are absolutely convergent.

Recall that $G := \GL_{2n+1}$ and $H := \Sp_{2n}$. Recall that for a subgroup $S$ of $G$, we denote by $S' := S \cap H$.

\subsection{Function field case}

Let $C$ be a smooth projective and geometrically connected curve over $\F_q$, let $F$ be the function field of $C$ and let $\Bun_G$ denotes the moduli stack of $G$ bundles on $C$.

For convenience, we work in the unramified setting. The modification to the ramified case is obvious (replacing $\Bun_G$ by $\Bun_{G,D}$ for some divisor $D$). 

We recall the Harder-Narasimhan filtration of a symplectic vector bundle. Let $\cE_s$ be a symplectic vector bundle on $C$. By definition, the Harder-Narasimhan filtration of $\cE_s$ is given by
    \begin{equation} \label{eq:HN-symplectic}
     0 \subsetneq \cW_1 \subsetneq \cdots \subsetneq \cW_k \subset \cW_k^{\perp} \subsetneq \cdots \subsetneq \cW_1^{\perp} \subsetneq \cE_s
    \end{equation}
    where each $\cW_i$ is isotropic subbundle of $\cE_s$ with slopes of consecutive quotients are strictly decreasing. (If $\cW_k = \cW_k^{\perp}$, then we only count it once.)

Let $\Bun_H^{\mathrm{gaps}\le 2g-2}(\F_q)$ denote that set of symplectic vector bundles with all gaps in the slope sequences are bounded by $2g-2$. Note that $\Bun_H^{\mathrm{gaps}\le 2g-2}(\F_q)$ is a finite set. 

\begin{proposition} \label{prop:function_field_compact_support}
    Let $f: \Bun_{G}(\F_q) \to \C$ be a function such that the constant terms $f_P = 0$ for any non-maximal parabolic subgroup $P$ of $G$. Then the restriction of $f$ on $\Bun_{H}(\F_q)$ is finitely supported.
\end{proposition}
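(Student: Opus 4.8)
The plan is to exploit two features together: the vanishing of constant terms along \emph{non-maximal} parabolics, and the self-duality forced on the underlying vector bundle by the symplectic structure. Write $\pi\colon\Bun_H\to\Bun_G$ for the map induced by $\Sp_{2n}\subset\GL_{2n}\subset\GL_{2n+1}$; since $\GL_{2n}$ sits in $\GL_{2n+1}$ fixing the last coordinate, $\pi$ sends a symplectic bundle $\cE_s$ to the rank-$(2n+1)$ vector bundle $\cE:=\cE_s\oplus\cO_C$, which I will treat as a plain vector bundle. The symplectic form gives $\cE_s\cong\cE_s^\vee$, so $\cE\cong\cE^\vee$, and $\deg\cE=0$. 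The whole proposition reduces to the claim: \emph{if the Harder--Narasimhan polygon of $\cE=\cE_s\oplus\cO_C$ has some gap $>2g-2$ between consecutive slopes, then $f(\cE)=0$.} Granting this, $f|_{\Bun_H(\F_q)}$ is supported on those $\cE_s$ for which $\cE_s\oplus\cO_C$ has all gaps $\le 2g-2$; since $\deg\cE=0$ this bounds the Harder--Narasimhan polygon of $\cE$, hence of its summand $\cE_s$ (one checks the exceptional set lies inside $\Bun_H^{\mathrm{gaps}\le 4g-4}(\F_q)$), which is finite by quasi-compactness, so $f|_{\Bun_H(\F_q)}$ is finitely supported.

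To prove the claim I would first record the shape of the Harder--Narasimhan filtration $0=\cU_0\subsetneq\cU_1\subsetneq\cdots\subsetneq\cU_l=\cE$ of a self-dual bundle of odd rank $2n+1$ and degree $0$: the $j$-th step of the HN filtration of $\cE^\vee$ is $(\cE/\cU_{l-j})^\vee$, and any isomorphism $\cE\cong\cE^\vee$ carries HN filtration to HN filtration, so the slopes satisfy $\nu_j=-\nu_{l+1-j}$ and the ranks satisfy $\rank\cU_j+\rank\cU_{l-j}=2n+1$; taking $j=l-j$ would give $2\rank\cU_{l/2}=2n+1$, so $l$ must be odd. Consequently the gap sequence $d_j:=\nu_j-\nu_{j+1}$ ($1\le j\le l-1$) is palindromic, $d_j=d_{l-j}$, with no fixed index. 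Hence a single gap $d_j>2g-2$ forces a second one: with $a:=\min(j,l-j)$ and $b:=\max(j,l-j)$ one has $1\le a<b\le l-1$ and $d_a,d_b>2g-2$. The flag $\cU_a\subsetneq\cU_b\subsetneq\cE$ then has three nonzero graded pieces, of ranks $r_1,r_2,r_3\ge 1$ with $r_1+r_2+r_3=2n+1$, i.e.\ it is a reduction of $\cE$ to the non-maximal standard parabolic $P=P_{r_1,r_2,r_3}$ of $G=\GL_{2n+1}$.

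The last step is a rigidity argument. By Serre duality a vector bundle $\cW$ with $\mu_{\min}(\cW)>2g-2$ has $H^1(C,\cW)=0$; applying this to $(\cU_b/\cU_a)^\vee\otimes\cU_a$, to $(\cE/\cU_b)^\vee\otimes\cU_a$ and to $(\cE/\cU_b)^\vee\otimes(\cU_b/\cU_a)$ — whose minimal slopes are $d_a$, $\nu_a-\nu_{b+1}\ge d_a$, and $d_b$, all $>2g-2$ — shows that any extension of $\cE/\cU_b$ by an extension of $\cU_b/\cU_a$ by $\cU_a$ is isomorphic, as a vector bundle, to $\cU_a\oplus(\cU_b/\cU_a)\oplus(\cE/\cU_b)$, and that $\cE$ itself is such an extension (via the canonical flag $\cU_a\subset\cU_b\subset\cE$). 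Thus the fiber of $q\colon\Bun_P\to\Bun_{\GL_{r_1}}\times\Bun_{\GL_{r_2}}\times\Bun_{\GL_{r_3}}$ over $\cE_M:=(\cU_a,\cU_b/\cU_a,\cE/\cU_b)$ is nonempty and every $P$-bundle in it has underlying $G$-bundle isomorphic to $\cE$. By definition of the constant term, $f_P(\cE_M)$ is then a sum with positive coefficients of the values $f(\cE')$ over this fiber, hence a positive multiple of $f(\cE)$. Since $P$ is non-maximal, $f_P\equiv 0$ by hypothesis, so $f(\cE)=0$, proving the claim.

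The real obstacle is conceptual rather than technical: one must notice that the weak hypothesis ``$f_P=0$ only for non-maximal $P$'' is useless against a single destabilizing subbundle and demands a three-step reduction, and that the symplectic structure together with the extra trivial line bundle — which makes $\pi(\cE_s)$ self-dual of \emph{odd} rank — is precisely what makes the HN polygon symmetric with an odd number of steps, so that destabilizing gaps come in pairs and a three-step parabolic is automatically available. The supporting ingredients (the $H^1$-vanishing and the resulting uniqueness of $P$-reductions with prescribed associated graded, and the finiteness of bundles with bounded HN polygon over a curve over $\F_q$) are routine; the only bookkeeping subtlety is that a large ``central'' gap of $\cE_s$ is halved in $\cE_s\oplus\cO_C$, so the finite exceptional set is cleanest to describe in terms of $\cE_s\oplus\cO_C$ (or a slightly larger gap bound on $\cE_s$) rather than $\cE_s$ directly.
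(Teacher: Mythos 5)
Your proof is correct and follows essentially the same route as the paper: reduce to the lemma that a rank-$(2n+1)$ bundle with two HN gaps $>2g-2$ has vanishing $f$-value (via the $\opn{Ext}^1$ rigidity along a three-step flag, i.e.\ a reduction to a non-maximal parabolic), and observe that $\cE_s\oplus\cO$ is self-dual of odd rank, forcing a palindromic gap sequence with no fixed index. The paper derives the two-gap claim by explicitly writing out the HN filtration of $\cE_s\oplus\cO$ in terms of the isotropic HN filtration of $\cE_s$, whereas your abstract self-duality argument is a modest streamlining and also flags the bookkeeping subtlety about the central gap in the Lagrangian case more transparently.
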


\begin{proof}
    We show that $f$ vanishes outside $\Bun_{H}^{\opn{gaps} \le 2g-2}(\F_q)$. Let $\cE_s \not \in \Bun_{H}^{\opn{gaps} \le 2g-2}(\F_q)$. Assume that Harder-Narasimhan filtration of $\cE_s$ is given by \eqref{eq:HN-symplectic}.

   The Harder-Narasimhan filtration of the vector bundle $\cE_s \oplus \cO$ is then given by
    \begin{equation*}
        0 \subsetneq \cW_1 \subsetneq \cdots \subsetneq  \cW_k  \subsetneq \cW_k^\perp \oplus \cO \subsetneq \cdots \subsetneq \cW_1^\perp \oplus \cO \subsetneq \cE_s.
    \end{equation*}
    Therefore, the proposition follows from lemma \ref{lem:function_field_compact_support} below.
\end{proof}
 
\begin{lemma} \label{lem:function_field_compact_support}
Let $f:\Bun_{G}(\F_q) \to \C$ satisfies the condition in proposition ~\ref{prop:function_field_compact_support}. Let $\cE \in \Bun_{G}(\F_q)$ such that in the slope sequence of $\cE$ there are two gaps $> 2g-2$. Then $f(\cE)=0$.
\end{lemma}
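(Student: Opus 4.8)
The plan is to exploit the two large gaps to split $\cE$ as a direct sum of three subbundles, which exhibits $\cE$ as a rigid point in the space of bundles reduced to a \emph{non-maximal} parabolic of $\GL_{2n+1}$, and then to recover $f(\cE)$ from the vanishing of the constant term along that parabolic. Concretely, write the Harder--Narasimhan filtration of $\cE$ as $0 = \cE_0 \subsetneq \cE_1 \subsetneq \cdots \subsetneq \cE_m = \cE$ with $\mu(\cE_i/\cE_{i-1})$ strictly decreasing in $i$, and call the \emph{gap at position $i$} (for $1 \le i \le m-1$) the positive number $\mu(\cE_i/\cE_{i-1}) - \mu(\cE_{i+1}/\cE_i)$. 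Choose two positions $a < b$ at which the gap exceeds $2g-2$ and set
\[
    \cV_1 := \cE_a,\qquad \cV_2 := \cE_b/\cE_a,\qquad \cV_3 := \cE/\cE_b,
\]
bundles on $C$ of ranks $r_1,r_2,r_3 \ge 1$ with $r_1+r_2+r_3 = 2n+1$. By strict monotonicity of the HN slopes together with the fact that the gaps at $a$ and at $b$ both exceed $2g-2$, one checks $\mu_{\min}(\cV_i) - \mu_{\max}(\cV_j) > 2g-2$ for all $i < j$.

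Next I would record the cohomological consequences on the genus-$g$ curve $C$. Using that a semistable bundle $\cG$ with $\mu(\cG) > 2g-2$ has $H^1(C,\cG) = 0$ (Serre duality and semistability), and one with $\mu(\cG) < 0$ has $H^0(C,\cG) = 0$, dévissage along Harder--Narasimhan filtrations gives, for all $i < j$,
\[
    \Ext^1_C(\cV_j,\cV_i) = 0
    \qquad\text{and}\qquad
    \Hom_C(\cV_i,\cV_j) = 0.
\]
Now put $M := \GL_{r_1}\times\GL_{r_2}\times\GL_{r_3}$, $P := P_{r_1,r_2,r_3}$, and consider the usual correspondence $\Bun_M \xleftarrow{\mathfrak q} \Bun_P \xrightarrow{\mathfrak p} \Bun_G$. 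Over the point $\cF_M := (\cV_1,\cV_2,\cV_3) \in \Bun_M(\F_q)$, an $\F_q$-point of $\Bun_P$ is a bundle equipped with a two-step flag whose successive quotients are $\cV_1,\cV_2,\cV_3$; the vanishing of $\Ext^1_C(\cV_2,\cV_1)$ and then of $\Ext^1_C(\cV_3,\cV_1\oplus\cV_2)$ forces such a bundle to be $\cV_1\oplus\cV_2\oplus\cV_3 = \cE$ with its evident flag, while the vanishing of $\Hom_C(\cV_i,\cV_j)$ for $i<j$ forces every automorphism of $\cE$ to preserve that flag. Hence $\mathfrak q^{-1}(\cF_M)$ consists of the single point $\cE$, with finite automorphism group, and $\mathfrak p$ carries it to $\cE\in\Bun_G(\F_q)$. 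Therefore the value at $\cF_M$ of the constant term $f_P = \mathfrak q_!\mathfrak p^* f$ is a strictly positive multiple of $f(\cE)$. Since $r_1,r_2,r_3\ge 1$, the parabolic $P$ is not maximal, so $f_P \equiv 0$ by hypothesis, whence $f(\cE) = 0$.

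I do not expect a genuine obstacle: once the three-block decomposition is in hand, everything is elementary bundle theory and standard facts about the geometric constant term. The one point I would stress is that \emph{two} large gaps are essential --- a single large gap only decomposes $\cE$ into two summands and hence only yields a reduction to a \emph{maximal} parabolic, along which the constant term of $f$ is \emph{not} assumed to vanish; it is precisely the second large gap that produces a genuinely non-maximal $P$, which is exactly where the hypothesis on $f$ bites. A secondary, purely bookkeeping, concern is the verification of the slope inequality $\mu_{\min}(\cV_i)-\mu_{\max}(\cV_j)>2g-2$ for $i<j$ and the dévissage reducing the $\Ext$/$\Hom$ vanishing to the semistable case, but neither is delicate.
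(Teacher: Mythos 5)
Your proposal is correct and follows essentially the same route as the paper: pick the two large gaps in the Harder--Narasimhan slope sequence, cut the filtration there to obtain a three-block Levi datum $(\cV_1,\cV_2,\cV_3)$ for the non-maximal parabolic $P_{r_1,r_2,r_3}$, use the gap condition to kill the relevant $\Ext^1$ groups so that the only bundle with a flag having these graded pieces is $\cE = \cV_1\oplus\cV_2\oplus\cV_3$, and then read off $f(\cE)=0$ from the vanishing of the constant term $f_P$. Your write-up is in fact slightly more careful than the paper's in two small ways: you explicitly note $\Ext^1_C(\cV_3,\cV_1)=0$ (the paper records only $\Ext^1(\cV_2,\cV_1)=0$ and $\Ext^1(\cV_3,\cV_2)=0$, though the third vanishing is needed and does follow), and you observe the $\Hom$-vanishing, which ensures uniqueness of the flag inside $\cE$ — pleasant but not strictly required, since any flag with those graded pieces must produce $\cV\cong\cE$, and each contributes the positive weight $1/\#\Aut$.
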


\begin{proof}
   Suppose that the Harder-Narasimhan filtration of $\cE$ is given by $0 \subsetneq \cW_1 \cdots \subsetneq \cW_k \subsetneq \cE$ with slopes $\mu_r := \mu(\cW_r/\cW_{r-1})$ satisfies $\mu_i - \mu_{i-1}>2g-2, \mu_j-\mu_{j-1}>2g-2$.
   
    Take $a = \opn{rank} \cW_i$, $b = \opn{rank} \cW_j$. Consider the parabolic subgroup $P=P_{a,b,c}=MN$ of $\GL_{2n+1}$. For $(\cV_1,\cV_2,\cV_3) \in \Bun_M(\bF_q)$, we have
    \[
    f_P(\cV_1,\cV_2,\cV_3) = \sum_{ \substack{\cF^{\bullet} := 0 \subset \cF_1 \subset \cF_2 \subset \cF_3 \subset \cV \in \Bun_P(\bF_q) \\ \cF_i/\cF_{i-1} \cong \cV_i }}  \frac{1}{\# \opn{Aut}(\cF^\bullet)} f(\cV) = 0
    \]
    Consider the case when $\cV_1 = \cW_i$, $\cV_2 = \cW_j/\cW_i$, $\cW_3 = \cE/\cW_j$. Since for $i=1,2$, $\mu(\cV_i)>\mu(\cV_{i+1})+2g-2$, we have $\opn{Ext}^1(\cV_2,\cV_1) = \opn{Ext}^1(\cV_3,\cV_2)=0$. Therefore $f_P(\cV_1,\cV_2,\cV_3)$ is a non-zero multiple of $f(\cE)$. This implies $f(\cE)=0$.
\end{proof}

Note that cuspidal Eisenstein series associated to maximal parabolic subgroup satisfy the assumption of the proposition \ref{prop:function_field_compact_support}, therefore we obtain.

\begin{corollary}
    Let $P$ be a maximal parabolic subgroup of $G$, let $\varphi \in \cA_{P,\opn{cusp}}(G)$ be a right $K = G(\mathbb{O})$ invariant function. Then the Eisenstein series $E(\cdot,\varphi,\lambda)$ is compactly supported on $[H]$ whenever $\lambda$ is regular.
\end{corollary}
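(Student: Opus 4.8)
The plan is to deduce the statement directly from Proposition~\ref{prop:function_field_compact_support}. Fix a regular $\lambda$ and put $f:=E(\cdot,\varphi,\lambda)$. First I would record the elementary point that $f$ descends to a function on $\Bun_G(\F_q)$: since $\varphi$ is right $K$-invariant and $H_P$ is trivial on $K$, the function $\varphi_\lambda$, and hence $f$, is right $K$-invariant; as $\lambda$ is regular, $f$ is an honest (finite-valued) left $G(F)$-invariant function on $G(\bA)$, so in the unramified function-field setting it descends to $f\colon\Bun_G(\F_q)=G(F)\backslash G(\bA)/K\to\C$. It then remains to verify the hypothesis of Proposition~\ref{prop:function_field_compact_support}, namely that $f_Q=0$ for every proper, non-maximal parabolic subgroup $Q$ of $G$.

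By conjugation it suffices to treat standard $Q$, because $f_{\gamma Q\gamma^{-1}}(g)=f_Q(\gamma^{-1}g)$ for $\gamma\in G(F)$ and $f$ is left $G(F)$-invariant. Write $Q=P_{c_1,\dots,c_k}$, so that $M_Q\cong\prod_{i=1}^{k}\GL_{c_i}$; here "$Q$ proper and non-maximal" is precisely the condition $k\ge 3$. Since $\varphi\in\cA_{P,\opn{cusp}}(G)$, the constant-term formula \eqref{eq:constant_term} gives $f_Q=\sum_{w\in W(P;Q)}E^Q(\cdot,M(w)\varphi,w\lambda)$ (valid for $\opn{Re}(\lambda)$ sufficiently positive, hence at the regular $\lambda$ by meromorphic continuation), so it suffices to show $W(P;Q)=\emptyset$. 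If $w\in W(P;Q)$ then $M_P\subset w^{-1}M_Qw$, i.e. $wM_Pw^{-1}$ is a Levi subgroup of $G$ containing $A_0$ and contained in $M_Q$; such a subgroup is a centralizer of a subtorus of $A_0$, hence a semistandard Levi subgroup of $M_Q$, and every semistandard Levi of $\prod_{i=1}^k\GL_{c_i}$ is a product of at least $k\ge 3$ general linear groups. But $P$ is maximal, so $wM_Pw^{-1}\cong M_P\cong\GL_a\times\GL_b$ (with $a,b\ge 1$, $a+b=2n+1$) is a product of exactly two general linear groups — a contradiction. Hence $W(P;Q)=\emptyset$, the sum is empty, and $f_Q\equiv 0$.

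With the hypothesis verified, Proposition~\ref{prop:function_field_compact_support} applied to $f$ shows that $f|_{\Bun_H(\F_q)}$ is finitely supported, i.e. $E(\cdot,\varphi,\lambda)$ is compactly supported on $[H]=\Bun_H(\F_q)$, which is exactly the assertion.

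I do not expect any serious obstacle here, since the substantive content is already isolated in Proposition~\ref{prop:function_field_compact_support}; the only steps requiring a little care are the bookkeeping identification of right-$K$-invariant automorphic functions with functions on $\Bun_G(\F_q)$ (and of $[H]$-periods with sums over $\Bun_H(\F_q)$), and the small combinatorial lemma that a two-block Levi of $\GL_{2n+1}$ cannot occur as a Levi subgroup of a Levi with three or more blocks, which is what makes all the non-maximal constant terms vanish via \eqref{eq:constant_term}.
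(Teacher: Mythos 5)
Your proof is correct and takes essentially the same route as the paper, which simply asserts without elaboration that "cuspidal Eisenstein series associated to maximal parabolic subgroup satisfy the assumption of the proposition." You supply the two routine verifications the paper leaves implicit: (i) that right $K$-invariance and holomorphy at a regular $\lambda$ let $E(\cdot,\varphi,\lambda)$ descend to a genuine function on $\Bun_G(\F_q)$, and (ii) that the constant term along every non-maximal parabolic $Q$ vanishes, via the constant-term formula \eqref{eq:constant_term} together with the observation that $W(P;Q)=\emptyset$ because a two-block Levi of $\GL_{2n+1}$ cannot embed as a semistandard Levi of a Levi with three or more blocks. Both steps are exactly the content the paper has in mind, and your Levi-block-count argument (equivalently, comparing dimensions of centers) is the standard way to see the emptiness of $W(P;Q)$.
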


\begin{remark}
    As previously noted, the assumption that $\varphi$ is unramified is only for sake of convenience. The corollary holds without this assumption.
\end{remark}

\subsection{Number field case}

In the number field case, we use Zydor's truncation to replace the Harder-Narasimhan argument in the function field. The reader can compare the approach here and Lafforgue's interpretation of Arthur truncation in the function field case \cite{Lafforgue}. Let $\lambda: \bG_m \to A_0'$ be a cocharacter, recall that we have a parabolic subgroup $P(\lambda)$ associated to $\lambda$ recalled in ~\eqref{eq:dynamical}. We denote by $\lambda^G$ the cocharacter $i \circ \lambda$, where $i:A_0' \to A_0$ is the natural embedding.

\begin{proposition}
    The map
    \begin{equation*}
        \cF'(B')\to \cF, \quad P(\lambda) \mapsto P(\lambda^G)
    \end{equation*}
    is a well-defined injection.
\end{proposition}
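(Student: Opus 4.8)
The plan is to make the combinatorics of parabolic subgroups explicit on both sides and then match them. First I would recall that semi-standard parabolic subgroups of $H = \Sp_{2n}$ (resp. of $G = \GL_{2n+1}$) containing $B'$ (resp. $B$) are standard parabolics, and standard parabolics of $\GL_{2n+1}$ are in bijection with compositions $(a_1,\dots,a_k)$ of $2n+1$, while standard parabolics of $\Sp_{2n}$ are in bijection with data $(b_1,\dots,b_r)$ with $b_i \ge 1$ and $\sum b_i \le n$ (the parabolic stabilizing a flag of isotropic subspaces of dimensions $b_1, b_1+b_2, \dots$). Concretely, if $P(\lambda)$ is the standard parabolic of $H$ attached to a dominant cocharacter $\lambda$, then with respect to the identification $H \subset \GL_{2n} \subset \GL_{2n+1}$ fixed in \S\ref{sec:prelim}, the cocharacter $\lambda^G = i \circ \lambda$ is a cocharacter of $A_0 \subset \GL_{2n+1}$; I would check it is dominant (or at least conjugate under $W$ to a dominant one in a canonical way that lands us in a standard parabolic) and identify the composition of $2n+1$ that $P(\lambda^G)$ corresponds to: it is the symmetric composition $(b_1, \dots, b_r, 2(n - \sum b_i)+1, b_r, \dots, b_1)$, the ``middle'' block of odd size absorbing the extra coordinate coming from the last basis vector of $F^{2n+1}$.

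The key steps, in order, would be: (1) write down $\lambda$ in coordinates on the standard maximal split torus $A_0'$ of $\Sp_{2n}$, so $\lambda(t) = \diag(t^{m_1}, \dots, t^{m_n}, t^{-m_n}, \dots, t^{-m_1})$ with $m_1 \ge \cdots \ge m_n \ge 0$, and observe that $P(\lambda)$ depends only on the ordered partition of $\{1,\dots,n\}$ into level sets of the $m_i$ together with the set of indices where $m_i = 0$; (2) compute $\lambda^G(t) = \diag(t^{m_1}, \dots, t^{m_n}, t^{-m_n}, \dots, t^{-m_1}, 1)$ as a cocharacter of the diagonal torus of $\GL_{2n+1}$, and apply the formula \eqref{eq:dynamical}: $P(\lambda^G)$ is the parabolic blocking together consecutive equal exponents; (3) observe that the exponent sequence of $\lambda^G$ is weakly decreasing except possibly for the final comparison of $-m_n$ versus $0$ versus the block structure — but since $-m_n \le 0$ and the last entry is $0$, and the sequence $m_1 \ge \cdots \ge m_n \ge 0 \ge -m_n \ge \cdots \ge -m_1$ reordered has the single $0$ (or the block of zeros $m_i = 0$ merged with the trailing $1$) sitting in the center, conclude that $P(\lambda^G)$ is indeed standard, with the symmetric composition described above; (4) check this assignment is well-defined (independent of the choice of $\lambda$ representing $P(\lambda)$, which is immediate since $P(\lambda)$ determines the level-set data which determines the composition) and injective (the symmetric composition of $2n+1$ determines $(b_1,\dots,b_r)$ and the size of the central block, hence recovers $P(\lambda)$).

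I expect the main obstacle to be a bookkeeping subtlety in step (3): one must be careful that when some $m_i = 0$, the corresponding torus coordinates $t^{m_i} = t^{-m_i} = 1$ merge with the trailing $1$ into a single central block of odd size $2s+1$ where $s = \#\{i : m_i = 0\}$, rather than producing a non-standard or disconnected-looking block; and one must confirm that no reordering (i.e.\ no nontrivial Weyl element of $\GL_{2n+1}$) is needed to bring $\lambda^G$ to dominant position, which relies on the specific upper-triangular/ordering conventions fixed for $B'$, $B$, $A_0'$, $A_0$ in \S\ref{sec:prelim} being compatible with the embedding $H \subset \GL_{2n} \subset \GL_{2n+1}$. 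Once the coordinate description is pinned down, well-definedness and injectivity are formal, so the real content is just this explicit identification of the map on compositions.
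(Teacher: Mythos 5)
Your overall plan --- coordinatize $\lambda$, compute $\lambda^G$, and read off $P(\lambda^G)$ from its exponent vector --- is exactly the paper's approach. But step (3) contains a genuine error: the claim that ``$P(\lambda^G)$ is indeed standard'' is false in general. Write $\lambda^G(t) = \diag(t^{m_1},\dots,t^{m_n},t^{-m_n},\dots,t^{-m_1},1)$ and set $a_i$ to be the $i$-th exponent, so $(a_1,\dots,a_{2n+1}) = (m_1,\dots,m_n,-m_n,\dots,-m_1,0)$. The last comparison in this sequence is between $a_{2n} = -m_1$ and $a_{2n+1}=0$ (not between $-m_n$ and $0$), and $-m_1 \le 0$ with equality only if $m_1=0$. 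Thus whenever $m_1 > 0$ the \emph{positive} root $e_{2n}-e_{2n+1}$ satisfies $\langle \lambda^G, e_{2n}-e_{2n+1}\rangle = -m_1 < 0$ and hence does not lie in $\Lie P(\lambda^G)$, so $B \not\subset P(\lambda^G)$: the image is semi-standard but not standard. The reordering you contemplate to move the $0$ to the middle conjugates $\lambda^G$ by a Weyl element and hence replaces $P(\lambda^G)$ by a conjugate parabolic; it cannot show $P(\lambda^G)$ itself is standard. This is precisely why the proposition targets $\cF$ rather than $\cF(B)$, and why the paper introduces the separate collection $\cF^H$ of ``$H$-relevant'' parabolics for the image. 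Your identification with symmetric compositions of $2n+1$ therefore does not go through.

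The correct endgame, which is also the paper's and which you nearly have, is to stop short of the composition bookkeeping. With $a_i$ as above, $\Lie P(\lambda^G)$ is the span of the diagonal Cartan and the root spaces $e_i - e_j$ with $a_i \ge a_j$. Since $a_1 \ge \cdots \ge a_{2n}$ and $a_{2n+1}=0$, whether $a_i \ge a_j$ holds is determined by (i) which pairs $i,j \le 2n$ satisfy $a_i = a_j$, and (ii) which $i \le 2n$ satisfy $a_i = 0$. This data is exactly what cuts out the Levi of $P(\lambda)$ inside $\Sp_{2n}$ (the roots of $\Sp_{2n}$ killed by $\lambda$), so the map is well defined; and since (i)--(ii) are recoverable from $\Lie P(\lambda^G)$ (look at which pairs $e_i-e_j$, $e_j-e_i$ both occur, and which $e_i-e_{2n+1}$, $e_{2n+1}-e_i$ both occur), the map is injective. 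No detour through compositions is needed, nor is one available.
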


\begin{proof}
    For $\lambda$ of the form
    \begin{equation*}
        \lambda(t) = \opn{diag}(t^{a_1},\cdots,t^{a_n},t^{-a_n},\cdots,t^{-a_1}),
    \end{equation*}
    where $a_1 \ge a_2 \cdots \ge a_n \ge 0$ are integers. For $n+1 \le k \le 2n$, we put $a_k := -a_{2n+1-k}$. Then the Lie algebra of $P(\lambda^G)$ consisting of roots $e_i-e_j$ where $1 \le i \le j \le 2n$ or $a_i=a_j$ or $a_i=0, j=2n+1$ or $a_j=0, i=2n+1$. It is easily seen that these conditions only depends on $P(\lambda)$.
\end{proof}

We denote the image of the map by $\cF^H$, and call its elements the \emph{$H$-relevant parabolic subgroups}.

For $P' \in \cF'(B')$ we denote by $P$ the image of $P'$ in $\cF^H$ in the map above. Note that $P \cap H = P'$, so this notation is compatible with our convention in ~\eqref{eq:group_convention}

We recall the Zydor's construction of truncation operator in ~\cite[\S 3.7]{Zydor19} For $\varphi \in \cT([G])$ and $h \in [H]$, we define
\begin{equation} \label{eq:Zydor_truncation}
    \Lambda^T_H \varphi(h) = \sum_{P' \in \cF'} \varepsilon_{P'} \sum_{\gamma \in P'(F) \backslash H(F)} \widehat{\tau}_{P'}(H_{P'}(\gamma h)-T_{P'}) \varphi_P(\gamma h).
\end{equation}

The following theorem is due to Zydor ~\cite[theorem 3.9]{Zydor19}, for the reader's convenience, we include a proof here.

\begin{theorem}[Zydor] \label{thm:Zydor_truncation_property}
    For $T$ sufficiently positive, $\Lambda^T \varphi \in \cS^0([H])$. More precisely, for any $N>0$, there exists a continuous semi-norm $\| \cdot \|$ (depending on $N$) on $\cT([G])$ such that 
    \begin{equation*}
        \| \Lambda^T_H \varphi \|_{\infty,N} \ll \| \varphi \|.
    \end{equation*}
    Where we recall that $\| \cdot \|_{\infty,N}$ is the norm defined in \eqref{eq:norm_infty_N}.
\end{theorem}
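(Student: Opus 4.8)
The plan is to adapt, following \cite{Zydor19}, Arthur's argument that his truncation operator is rapidly decreasing. \emph{Step 1 (reduction to a Siegel set).} The outer sum in \eqref{eq:Zydor_truncation} runs over the finite set $\cF'$, and for each $P'\in\cF'$, once $T$ is sufficiently positive, reduction theory for $H$ shows that for every $h$ in a fixed Siegel set $\fs$ of $[H]$ the inner sum over $\gamma\in P'(F)\backslash H(F)$ has only finitely many nonzero terms, the contributing cosets lying in a fixed compact subset of $P'(F)\backslash H(\bA)$ and their number being $\ll\|h\|_H^{C}$; hence $\Lambda^T_H\varphi$ is well-defined and every rearrangement below is legitimate. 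Since $\Lambda^T_H\varphi$ is a function on $[H]$, it suffices to bound $\lvert\Lambda^T_H\varphi(h)\rvert\,\|h\|_H^{N}$ for $h\in\fs$. On the part of $\fs$ lying in a fixed compact subset of $[H]$ each $\varphi_P$ is bounded there by a continuous semi-norm of $\varphi$ and $\|h\|_H$ is bounded, so the bound is trivial; it remains to treat $h$ going to a cusp of $[H]$.

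\emph{Step 2 (combinatorial cancellation).} Substituting $\varphi_P(\gamma h)=\varphi(\gamma h)-(\varphi-\varphi_P)(\gamma h)$ into \eqref{eq:Zydor_truncation} decomposes $\Lambda^T_H\varphi=\mathrm I-\mathrm{II}$, where $\mathrm I(h)=\sum_{P'\in\cF'}\varepsilon_{P'}\sum_{\gamma}\widehat\tau_{P'}(H_{P'}(\gamma h)-T_{P'})\varphi(\gamma h)$ and $\mathrm{II}(h)=\sum_{P'\in\cF'}\varepsilon_{P'}\sum_{\gamma}\widehat\tau_{P'}(H_{P'}(\gamma h)-T_{P'})(\varphi-\varphi_P)(\gamma h)$. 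For $\mathrm I$, partition $[H]$ (off the bounded subset of Step 1) into the regions attached by Arthur's functions $F^{Q'}(\,\cdot\,,T)$ to $Q'\in\cF'(B')$, with image $Q$ in $\cF^{H}$. On the region attached to $Q'$ — where $h$ is $M_{Q'}$-bounded and deep along $N_{Q'}$ — reduction theory pins the surviving $\gamma$'s to a fixed finite set and makes $\widehat\tau_{P'}(H_{P'}(\gamma h)-T_{P'})=1$ for all contributing $P'$, and Langlands' combinatorial lemma applied to the cones $\{\fa_{P'}\}$ shows that the ensuing alternating sum over $P'$ cancels (ultimately because an alternating sum of the shape $\sum\varepsilon_{P'}$ over a nontrivial family of parabolics of the semisimple group $H$ vanishes); hence $\mathrm I\equiv0$ off the bounded subset. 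This step crucially uses the order-preserving injection $\cF'(B')\hookrightarrow\cF^{H}\subseteq\cF^{G}$, $P'\mapsto P$, of \eqref{eq:dynamical} to run the truncation combinatorics of $H$ alongside the constant-term combinatorics of $G$.

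\emph{Step 3 (constant-term estimate and conclusion).} The remaining input is the decay of $\psi-\psi_R$ for $\psi\in\cT([G])$ and $R=M_RN_R$ a parabolic of $G$: for $x$ in a Siegel set of $[G]$ one has $\lvert(\psi-\psi_R)(x)\rvert\ll_N e^{-N d_R(x)}\|x\|_G^{C}\|\psi\|$, where $d_R(x):=\min_{\alpha\in\Delta_0^{R}}\langle\alpha,H_0(x)\rangle\ge0$ measures how far $x$ has gone toward the $R$-cusp and $\|\cdot\|$ is a continuous semi-norm on $\cT([G])$ depending on $N$ and on finitely many elements of $\cU(\fn_R)\subseteq\cU(\fg_\infty)$. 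One proves this by Fourier-expanding $n\mapsto\psi(nx)$ over the compact quotient $[N_R]$, $\psi(nx)=\psi_R(x)+\sum_{\chi\ne1}\psi_\chi(x)\chi(n)$, integrating $\psi_\chi(x)=\int_{[N_R]}\psi(n'x)\overline{\chi(n')}\,\mathrm{d}n'$ repeatedly by parts against right-invariant vector fields in $\fn_R$ — each nontrivial $\chi$ acquiring a growing eigenvalue — and summing over $\chi$. Apply this with $\psi=\varphi$ and $R=P$: on the region of $[H]$ attached to $Q'$ as in Step 2, every contributing $\gamma h$ has $P'\supseteq Q'$, hence $P\supseteq Q$, and by $H$-relevance of $P$ the point $\gamma h$ is $M_P$-bounded with $d_P(\gamma h)\asymp\log\|h\|_H$, so each term of $\mathrm{II}(h)$ is $\ll_N\|h\|_H^{-N}\|\varphi\|$; since $\mathrm{II}(h)$ is a sum of $\ll\|h\|_H^{C}$ such terms, so is $\mathrm{II}(h)$. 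Combining with $\mathrm I\equiv0$ off the bounded subset and the trivial bound on it gives $\|\Lambda^T_H\varphi\|_{\infty,N}\ll\|\varphi\|$ for every $N$, whence $\Lambda^T_H\varphi\in\cS^0([H])$.

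\emph{Main obstacle.} I expect the delicate part to be the bookkeeping of Step 2 and the uniform reduction-theory input it rests on: one must line up the relative truncation data of $H$ (the $P'$, the cone functions $\widehat\tau_{P'}$ on $\fa_{P'}$, Arthur's $F^{Q'}$, the reduction theory of $[H]$) with the constant-term data of $G$ (the $H$-relevant parabolics $P$, the cones $\fa_P$) closely enough that Langlands' combinatorial lemma forces the alternating sum defining $\mathrm I$ to collapse, and one must verify that every support and every quantitative estimate — notably $d_P(\gamma h)\asymp\log\|h\|_H$ for the contributing $\gamma$ — holds uniformly once $T$ is sufficiently positive. The constant-term decay estimate of Step 3 is, by contrast, a standard fact about functions of uniform moderate growth.
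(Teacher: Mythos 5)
Your proof takes a genuinely different route from the paper's. The paper's proof is abstract and terse: it recalls from \cite[\S 4.9]{BLX} the space $\cT(H)$ of compatible tuples $({}_{P'}\varphi)_{P'\in\cF'(B')}$ (with the condition ${}_{P'}\varphi-{}_{Q'}\varphi\in\cS_{d_{P'}^{Q'}}$), invokes \cite[Proposition~4.27]{BLX}, which says precisely that such a tuple depending continuously on $\varphi$ produces a rapidly decreasing alternating sum, applies it to the tuple $(\varphi_P)_{P'}$ of constant terms, and reduces the needed membership in $\cT(H)$, via the approximation-by-constant-term estimate of \cite[Proposition~2.5.14.1]{BPCZ}, to a one-line root-theoretic comparison $d_{P'}^{Q'}\sim d_P^Q$. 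Your argument, by contrast, is Arthur's hands-on proof adapted to Zydor's relative truncation: split $\Lambda_H^T\varphi=\mathrm I-\mathrm{II}$, show $\mathrm I$ is compactly supported by Arthur's characteristic-function lemma, and bound $\mathrm{II}$ by the constant-term decay estimate. Both arguments bottom out in the same comparison between the weight functions of $H$-parabolics $P'$ and of their $H$-relevant images $P$ in $G$ — your $d_P(\gamma h)\asymp\log\|h\|_H$ is the paper's $d_{P'}^{Q'}\sim d_P^Q$ — but the paper delegates all the truncation combinatorics to general machinery, while yours is self-contained and closer to Zydor's own argument in \cite{Zydor19}, at the price of carrying the bookkeeping (the partition by $F^{Q'}$, pinning the contributing $\gamma$'s, uniformity in $T$) that you correctly flag as the delicate part.

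Two small remarks on your Step 2. First, it simplifies cleanly: since $\varphi$ is left $G(F)$-invariant and $\gamma\in H(F)\subset G(F)$, $\varphi(\gamma h)=\varphi(h)$ factors out of the inner sum, giving literally $\mathrm I(h)=\varphi(h)\cdot F^{H}(h,T)$ with $F^{H}(\cdot,T)$ Arthur's compactly supported function on $[H]$; no interplay with the constant-term combinatorics of $G$ is needed at this step. Second, your parenthetical explanation — that the cancellation happens ``ultimately because an alternating sum of the shape $\sum\varepsilon_{P'}$ over a nontrivial family of parabolics vanishes'' — is not quite the mechanism: the alternating sum does not vanish but equals the characteristic function $F^H(\cdot,T)$ of a compact set, which is what makes $\mathrm I$ compactly supported rather than identically zero. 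Your main claim is correct; only this heuristic aside is off.
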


\begin{proof}
    Recall the function space $\cT(H)$ in ~\cite[\S 4.9]{BLX} (denoted by $\cT_{\cF}(G)$ in \emph{loc. cit}). It consists of tuples of functions $({}_{P'} \varphi)$ for each $P' \in \cF'(B')$, such that ${}_{P'}\varphi' \in \cT(P'(F) \backslash H(\bA))$ for any $P'$ and ${}_{P'} \varphi - {}_{Q'} \varphi \in \cS_{d_{P'}^{Q'}}(P'(F) \backslash H(\bA))$ for any $P' \subset Q'$ (see \emph{loc. cit.} for the definition of $d_{P'}^{Q'}$). The space $\cT(H)$ carries a natural topology and by ~\cite[Proposition 4.27]{BLX}, it suffices to check that the tuple $(\varphi_P)_{P' \in \cF'(B')}$ belongs to $\cT(H)$ and the map
    \begin{equation*}
        \cT([G]) \to \cT(H), \quad \varphi \mapsto (\varphi_P)
    \end{equation*}
    is continuous.

    By the approximation by constant term ~\cite[Proposition 2.5.14.1]{BPCZ}, it suffices to check that $d_{P'}^{Q'} \sim d_{P}^Q$ on $[H]_{P'}$. For a suitably chosen Siegel set, we have $\fs^{P'} \subset \fs^{P}$, therefore, it reduces to the following claim:
    \begin{num}
        \item For $P' \subset Q'$, let $\alpha \in X^*(A_0)$ be a root in $\fn_{P} \cap \fm_Q$, then a multiple of $\alpha|_{A_0'}$ is a root in $\fn_{P'} \cap \fm_{Q'}$,
    \end{num}
    which one can easily check directly.
\end{proof}

We record a corollary of the theorem above.

\begin{corollary} \label{cor:Eisenstein_series_rapid_decrease}
    Let $Q$ be a maximal parabolic subgroup of $G$, and let $\varphi \in \cA_{Q,\opn{cusp}}(G)$. Then for $\lambda \in \fa_{P,\C}^*$ such that the Eisenstein series $E(g,\varphi,\lambda)$ is holomorphic, it is Schwartz on $[H]$. In other words, $h \mapsto E(h,\varphi,\lambda) \in \cS([H])$.
\end{corollary}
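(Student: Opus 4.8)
The plan is to deduce this directly from Theorem \ref{thm:Zydor_truncation_property} together with the constant term formula \eqref{eq:constant_term}. The key observation is that for a \emph{maximal} parabolic $Q$ and $\varphi \in \cA_{Q,\opn{cusp}}(G)$, the Eisenstein series $E(\cdot,\varphi,\lambda)$ has only two nonzero constant terms along standard parabolics: $E_G = E$ itself and $E_Q = \varphi_\lambda + (\text{intertwined terms})$, because $W(Q;P)$ is empty unless $P$ is associate to $Q$ (here $G$ and $Q$, up to conjugacy). In particular, for every \emph{proper} $H$-relevant parabolic $P$ other than those associate to $Q$, the constant term $E(\cdot,\varphi,\lambda)_P$ vanishes, and the surviving ones are again cuspidal Eisenstein series on the corresponding Levi. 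Thus, when we apply Zydor's truncation operator $\Lambda^T_H$ to $\psi := E(\cdot,\varphi,\lambda)$, almost all terms in \eqref{eq:Zydor_truncation} drop out, and a direct manipulation (as in the unfolding of truncation operators) shows that $\Lambda^T_H \psi$ and $\psi|_{[H]}$ differ by a sum of terms each of which is manifestly rapidly decreasing — coming from the fact that the remaining constant terms are supported where the relevant $\widehat\tau$-functions force the Iwasawa coordinates into a compact region transverse to the rapidly-decreasing directions.

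Concretely, first I would record that $E(\cdot,\varphi,\lambda) \in \cT([G])$ (standard: an Eisenstein series is of uniform moderate growth where holomorphic), so Theorem \ref{thm:Zydor_truncation_property} applies and gives $\Lambda^T_H E(\cdot,\varphi,\lambda) \in \cS^0([H])$ with the quantitative bound. Next I would upgrade $\cS^0$-membership to $\cS$-membership: since $E(\cdot,\varphi,\lambda)$ is $Z(\fg_\infty)$-finite and $K$-finite (or at least smooth with all derivatives again Eisenstein series of the same type), applying the estimate to $R(X)E(\cdot,\varphi,\lambda)$ for all $X \in \cU(\fg_\infty)$ controls all the Schwartz seminorms $\|\cdot\|_{X,N}$, giving $\Lambda^T_H E(\cdot,\varphi,\lambda) \in \cS([H])$. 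Finally I would show $\Lambda^T_H E(\cdot,\varphi,\lambda) = E(\cdot,\varphi,\lambda)$ on $[H]$: this is the identity $\Lambda^T_H \psi = \psi$ for any $\psi \in \cT([G])$ whose constant terms $\psi_P$ vanish for all proper $H$-relevant $P$ — which holds here because the only nonvanishing proper constant terms of $E(\cdot,\varphi,\lambda)$ are along parabolics associate to $Q$, and one checks these contribute nothing to $\Lambda^T_H$ after the standard cancellation among the $\varepsilon_{P'}\widehat\tau_{P'}$ terms (exactly as in Arthur's/Zydor's proof that truncation is a projector onto the space of rapidly decreasing functions). Alternatively, and perhaps more cleanly, one argues that cuspidality of $\varphi$ forces $E(\cdot,\varphi,\lambda)_P = 0$ for \emph{all} proper standard $P$ that are $H$-relevant except the associates of $Q$, and for the associates of $Q$ the truncation explicitly removes them.

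The main obstacle I anticipate is the last step — proving $\Lambda^T_H E(\cdot,\varphi,\lambda)$ agrees with $E(\cdot,\varphi,\lambda)$ on the nose. This requires carefully matching the combinatorics of $H$-relevant parabolics of $G$ with semistandard parabolics of $H$ (via the injection $\cF'(B') \hookrightarrow \cF^H$ established above), and verifying that the constant-term data entering Zydor's formula \eqref{eq:Zydor_truncation} for $\psi = E(\cdot,\varphi,\lambda)$ telescopes to leave just $\psi$ itself. One must be slightly careful that $\varphi_P$ in \eqref{eq:Zydor_truncation} denotes the constant term of the \emph{Eisenstein series} along the $H$-relevant parabolic $P$ (not along $P'\subset H$), and invoke \eqref{eq:constant_term} to identify these with lower-rank cuspidal Eisenstein series, all of which vanish for dimension/associativity reasons unless $P$ is conjugate to $Q$ or to $G$. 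Once this bookkeeping is in place, the rapid decay is immediate from Theorem \ref{thm:Zydor_truncation_property}, and compact support in the function field case is the analogous (and already proven) statement.
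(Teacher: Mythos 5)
Your strategy (apply Zydor's truncation and argue it is the identity on the Eisenstein series) is the same as the paper's, but you miss the one combinatorial fact that makes the argument close cleanly, and because of that your proof has a real gap where you anticipate the presence of surviving constant terms and hand-wave about controlling them.

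The paper's proof rests on the following observation, established implicitly by the proposition just before it: under the injection $\cF'(B') \hookrightarrow \cF$, $P' \mapsto P$, the image of any \emph{proper} $P' \subsetneq H$ is \emph{never} a maximal parabolic subgroup of $G$. Concretely, an $H$-relevant parabolic comes from a cocharacter $\lambda(t)=\diag(t^{a_1},\dots,t^{a_n},t^{-a_n},\dots,t^{-a_1})$ with $a_1\geq\cdots\geq a_n\geq 0$; appending the extra weight $0$ for the last coordinate of $F^{2n+1}$, the resulting $G$-weights $\{a_1,\dots,a_n,0,-a_n,\dots,-a_1\}$ are symmetric about $0$, so if they are nonconstant they take at least three distinct values, i.e.\ $P=P(\lambda^G)$ has at least two blocks beyond a maximal one. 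Since $\varphi$ is cuspidal supported on a \emph{maximal} $Q$, the constant term formula \eqref{eq:constant_term} gives $E_P(\cdot,\varphi,\lambda)=0$ for every such $P$, because $W(Q;P)=\emptyset$ when $P$ is not associate to $Q$ or to $G$. Hence in Zydor's sum \eqref{eq:Zydor_truncation} every term with $P'\neq G'$ vanishes identically and $\Lambda_H^T E(\cdot,\varphi,\lambda)=E(\cdot,\varphi,\lambda)$ on the nose, with nothing to cancel. The Schwartz bound then follows from \cref{thm:Zydor_truncation_property} applied to $R(X)E(\cdot,\varphi,\lambda)$.

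Your writeup instead assumes there may be $H$-relevant parabolics associate to $Q$ whose constant terms survive, and proposes to control them via a vague claim that the $\widehat\tau$-functions confine those terms to a compact/transverse region. That claim is not correct as stated — the $\widehat\tau$-cutoffs confine things to a cusp-neighborhood which is not compact, and the usual Arthur-truncation decay requires a genuine combinatorial cancellation, not a single-term estimate. Your fallback ``more cleanly'' version has the same issue: it still imagines associates of $Q$ among the $H$-relevant parabolics and says ``the truncation explicitly removes them,'' which is neither proved nor needed. Once you observe that no proper $H$-relevant parabolic of $G$ is maximal, the entire issue disappears and the proof is a three-line consequence of \cref{thm:Zydor_truncation_property}.
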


\begin{proof}
    Note that for any $P' \in \cF'(B')$ such that $P' \ne G'$, the corresponding element $P$ in $\cF^H$ is \emph{not} a maximal parabolic subgroup of $G$. Hence by the constant term formula \eqref{eq:constant_term}, $E_P(h,\varphi,\lambda)=0$. As a consequence, for any sufficiently regular $T \in \fa'_{0}$, $\Lambda^T_H E(h,\varphi,\lambda) = E(h,\varphi,\lambda)$. The corollary the follows from theorem ~\ref{thm:Zydor_truncation_property} (applied to $R(X)E(\cdot,\varphi,\lambda)$ for any $X \in \cU(\fg_{\infty}))$.
\end{proof}

In particular, for any cuspidal Eisenstein series associated to maximal parabolic subgroup, the period
\[
    \int_{[H]} E(h,\varphi,\lambda) \rd h
\]
is absolutely convergent whenever $\lambda$ is regular.

\section{Vanishing of certain periods}
\label{sec:cuspidal}

\subsection{A lemma on Klingen-mirabolic period}


Let $P$ be the subgroup of $\GL_{2n}$ of the following form
\begin{equation*}
    \begin{pmatrix}
        1 & * & * \\
          &  g & * \\
          &          & 1
    \end{pmatrix}, \quad g \in \GL_{2n-2}.
\end{equation*}
We call it the Klingen-mirabolic subgroup of $\GL_{2n}$. Recall $P^\prime=P \cap H$. We call it the Klingen-mirabolic subgroup of $H$. Write the Levi decomposition of $P=\GL_{2n}^\flat U$, where $U$ is the unipotent radical and
\[
    \GL_{2n}^\flat = \begin{pmatrix}
        1 &   &    \\
          & g &  \\
          &       & 1
    \end{pmatrix}, \quad g \in \GL_{2n-2}.
\]
Then $P' = H^\flat U'$, where
\[
    H^\flat = \begin{pmatrix}
        1 &   &    \\
          & h &  \\
          &       & 1
    \end{pmatrix}, \quad h \in \Sp_{2n-2},
\]
and
\[
    U' = \begin{pmatrix}
        1 & a & b & c \\
          & I_{n-1} &  & J_{n-1} {}^tb \\
          &        &   I_{n-1} &  -J_{n-1} {}^ta \\
          &        &           &  1
    \end{pmatrix} ,\quad a, b \in F_{n-1}, c \in F.  
\]
The form of $U^\prime$ follows from the general computation that
\begin{equation*}
    \begin{pmatrix}
        I_{n-a} & X & Y & Z \\
                 & I_{a} &  & W \\
                 &    &  I_{a} & T \\
                 &   &    & I_{n-a}
    \end{pmatrix}
\end{equation*}
is an element of the symplectic group if and only if:
\begin{equation*}
    T = -J_{a}{}^tX J_{n-a}, \quad W = J_{a}{}^tY J_{n-a}, \quad Z^t J_{n-a} + W^t J_{a} T - T^t J_{a} W - J_{n-a}Z = 0.
\end{equation*}

Let $P^\flat$ be the  Klingen-mirabolic subgroup of $\GL_{2n}^\flat$ and let $P^{\prime,\flat}$ be the Klingen-mirabolic subgroup of $H^\flat$.

Note that 
\begin{num}
    \item \label{eq:normality} $U'$ is a normal subgroup of $U$ and $U/U'$ is abelian.
\end{num}

Let $\rd p'$ be the \emph{right} Haar measure on $P'(\bA)$. More concretely, we have
\begin{equation*} \label{eq:integration_on_P'}
  \int_{[P']} f(p') \rd p' = \int_{[H^\flat]} \int_{[U']} f(u'h) \rd u' \rd h. 
\end{equation*}

We say a parabolic subgroup $Q$ of $\GL_{2n}$ is \emph{$\theta$-stable}, if $\theta(Q) = Q$, where we recall $\theta$ is the involution defined in ~\eqref{involution}. We say a smooth function $f$ on $[P]$ is \emph{$\theta$-cuspidal} if, for all $\theta$-stable parabolic subgroup $Q$ of $\GL_{2n}$, the constant term 
\begin{equation*}
    f_Q(p) := \int_{[N_Q]} f(np) \rd n 
\end{equation*}
vanishes. Note that $N_Q \subset P$ so the integration above makes sense. More concretely, we have the following description:
\begin{num}
    \item \label{eq:theta_cuspidal} $f$ is $\theta$-cuspidal, if and only if $f_{N_{k,2n-2k,k}} = 0$ for all $k=1,2,\cdots,n$.
\end{num}

\begin{lemma}[See also \cite{MO21}] \label{lem:Klingen-mirabolic}
    Let $f \in \cS([P])$ be a $\theta$-cuspidal function on $[P]$, then we have
    \[
        \int_{[P']} f(p') \rd p' = 0.
    \]
\end{lemma}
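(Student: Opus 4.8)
The plan is to run a Fourier--Jacobi descent in the spirit of \cite{MO21}: peel off $U'$ against additive characters of an ambient abelian unipotent quotient, observe that the trivial character contributes a constant term that is killed by $\theta$-cuspidality, and reduce the rest to a period of the same shape for a symplectic group of strictly smaller rank, which is then handled by induction on $n$.

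First I would observe that $U=N_P$ is also the unipotent radical $N_{1,2n-2,1}$ of the standard parabolic $P_{1,2n-2,1}$ of $\GL_{2n}$, which is $\theta$-stable (the flag $\langle e_1\rangle\subset\langle e_1,\dots,e_{2n-1}\rangle$ equals its own symplectic perp), and that by \eqref{eq:normality} we have $U'\triangleleft U$ with $\bar U:=U/U'$ abelian of dimension $2n-2$, carrying (via conjugation) the standard representation of $H^\flat\cong\Sp_{2n-2}$. For fixed $h$, the assignment $\bar u\mapsto\int_{[U']}f(u'\bar uh)\,\rd u'$ descends to a smooth function on the compact abelian group $[\bar U]$; expanding it in characters and evaluating at $\bar u=1$ yields
\[
  \int_{[U']}f(u'h)\,\rd u'=\sum_{\xi\in\widehat{[\bar U]}}\ \int_{[U]}f(uh)\,\overline{\xi(u)}\,\rd u ,
\]
and the interchange of this sum with the outer $[H^\flat]$-integral is legitimate because $f\in\cS([P])$. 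The trivial character contributes $f_U(h)=f_{N_{1,2n-2,1}}(h)$, which vanishes identically: this is the $k=1$ case of the characterization \eqref{eq:theta_cuspidal} of $\theta$-cuspidality.

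For the nontrivial characters I would bring in the $H^\flat(F)$-action. Since $\bar U$ is the standard symplectic representation, Witt's theorem makes $\Sp_{2n-2}(F)$ act transitively on the nonzero covectors, so there is a single orbit; fixing a representative $\xi_0$ whose stabilizer is exactly the Klingen--mirabolic subgroup $P^{\prime,\flat}$ of $H^\flat$ and unfolding, the period becomes
\[
  \int_{[P']}f(p')\,\rd p'=\int_{P^{\prime,\flat}(F)\backslash H^\flat(\bA)}F_{\xi_0}(h)\,\rd h,\qquad F_{\xi_0}(h):=\int_{[U]}f(uh)\,\overline{\xi_0(u)}\,\rd u .
\]
Writing $P^{\prime,\flat}\cong\Sp_{2n-4}\ltimes N'$ with $N'$ the Heisenberg unipotent radical of the Klingen parabolic of $H^\flat$, and using that $N'$ fixes $\xi_0$, the character $\xi_0$ extends to a character $\widetilde\xi_0$ of $\widetilde U:=UN'$ trivial on $N'$; one checks that $\widetilde U$ is once more the unipotent radical of a $\theta$-stable parabolic of $\GL_{2n}$. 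Performing the $[N']$-integral and then the Iwasawa decomposition of $H^\flat$ along its Klingen parabolic rewrites the last integral as a $\GL_1(\bA)$-integral of symplectic periods $\int_{[\Sp_{2n-4}]}\widetilde F(\dots)\,\rd g$, where $\widetilde F(g):=\int_{[\widetilde U]}f(\widetilde ug)\,\overline{\widetilde\xi_0(\widetilde u)}\,\rd\widetilde u$.

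It remains to show that each inner symplectic period vanishes, and this is the step I expect to be the real obstacle. The mechanism should repeat: restricted to the copy of $\GL_{2n-4}$ inside $\GL_{2n}$ containing the $\Sp_{2n-4}$-factor of $P^{\prime,\flat}$, the function $\widetilde F$ again has vanishing constant terms along all $\theta$-stable parabolics, because those constant terms unwind to constant terms of $f$ along larger $\theta$-stable parabolics of $\GL_{2n}$ --- here the higher instances $k=2,3,\dots$ of \eqref{eq:theta_cuspidal} get consumed --- so the whole Fourier--Jacobi step can be iterated. After $n$ steps the period is exhausted; the base case $n=1$ is immediate, since then $P'=U'=U$ and the period is literally $f_U(1)=0$. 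The delicate points, all ultimately controlled by $f\in\cS([P])$, are: keeping track of precisely which $\theta$-stable parabolic (equivalently, which value of $k$) is used at each stage; verifying that $\theta$-cuspidality is genuinely inherited through each Fourier--Jacobi step, in particular that the additive characters produced stay fixed under the unipotent conjugations that arise; and justifying the convergence and the interchanges of summation and integration throughout.
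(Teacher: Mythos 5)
Your first half matches the paper's proof exactly: Fourier expand the $[U']$-integral over the abelian quotient $[U/U']$, observe that the trivial character contributes the vanishing constant term $f_U$ (the $k=1$ instance of \eqref{eq:theta_cuspidal}), observe that $H^\flat(F)$ acts transitively on nontrivial characters with stabilizer $P^{\prime,\flat}$, and unfold. Where you diverge is in how the induction is closed. The paper simply notes that the resulting Fourier coefficient $f_{U',\psi_n}$, viewed as a function on $[P^\flat]$, is again $\theta$-cuspidal, so the inner integral $\int_{[P^{\prime,\flat}]} f_{U',\psi_n}(ph)\,\rd p$ is precisely a Klingen--mirabolic period at level $n-1$ and vanishes by the induction hypothesis. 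This is the cleanest formulation: the object you induct on is the lemma itself, for $\GL_{2n-2}$.

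Your attempt to instead unroll one further Fourier--Jacobi layer and reduce to an $\Sp_{2n-4}$-period goes off track in two places. First, $\widetilde U := U N'$ (with $N'=U^{\prime,\flat}$ the unipotent radical of the Klingen--mirabolic of $\Sp_{2n-2}$) is \emph{not} the unipotent radical of a parabolic of $\GL_{2n}$: a dimension count gives $\dim \widetilde U = (4n-3)+(2n-3)=6n-6$, whereas the smallest candidate $N_{1,1,2n-4,1,1}$ has dimension $8n-10$; so the claimed $\theta$-stability and the "constant terms of $\widetilde F$ unwind to constant terms of $f$" argument do not apply directly to $\widetilde U$. Second, you are reducing to a symplectic period $\int_{[\Sp_{2n-4}]}\widetilde F$, which is a different statement from the lemma; in the paper the symplectic period vanishing is a downstream corollary (Corollary \ref{cor:symplectic_period}), not the inductive object. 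Both issues disappear if, instead of unrolling, you stop at the Klingen--mirabolic period over $[P^{\prime,\flat}]$ and verify the one checkable claim — that $p\mapsto f_{U',\psi_n}(p)$ is $\theta$-cuspidal on $[P^\flat]$, which follows because $U\cdot N^\flat_{k,2n-2-2k,k} = N_{1,k,2n-2-2k,k,1} \supset N_{k+1,2n-2k-2,k+1}$, so the relevant constant terms of $f_{U',\psi_n}$ are controlled by the $k+1$ instance of \eqref{eq:theta_cuspidal} — and then invoke the lemma at level $n-1$.
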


\begin{proof}
    For $p \in P(\bA)$, denote by $f_{U'}$ the function defined by
    \begin{equation*}
        f_{U'}(p) := \int_{[U']} f(u'p) \rd u'.
    \end{equation*}

    By ~\eqref{eq:normality}, for any $p \in P(\bA)$, $p \mapsto f_{U'}(up)$ defines a function on $[U/U']$. 

    Let $\psi_n$ be the character on $[U/U']$ defined by
    \begin{equation*}
        \psi_n(u) = \psi(u_{12} + u_{n-1,n}),
    \end{equation*}
    where $u_{12}$ and $u_{n-1,n}$ denote the $(1,2)$ and $(n-1, n)$ entries of $u$ respectively.
    
    Note $U / U^\prime$ is abelian, by Fourier expansion on $[U/U']$,  we can write
    \begin{equation} \label{eq:vanishing_1}
        f_{U'}(p) =  \sum_{\chi} f_{U',\chi}(p),
    \end{equation}
    where the Fourier coefficient $f_{U',\chi}$ is defined by
    \begin{equation*}
        f_{U',\chi}(p) := \int_{[U/U']} f_{U'}(up) \chi^{-1}(u) \rd u.
    \end{equation*}

    When $\psi = \mathbbm{1}$ is the trivial character, then $f_{U',\mathbbm{1}}(g) = f_U(g) = 0$ by the cuspidality assumption. 

    The nontrivial character on $[U/U']$ can be written as $u \mapsto \psi_n(\gamma^{-1} u \gamma)$, where $\gamma \in H^\flat(F)$. Therefore, the equation ~\eqref{eq:vanishing_1} can be written as
    \[
        f_{U'}(p) = \sum_{\gamma \in P^{\prime,\flat}(F) \backslash H^\flat(F)} f_{U',\psi_n}(\gamma p).
    \]
    Therefore by ~\eqref{eq:integration_on_P'}
    \begin{equation*}
        \int_{[P']} f(p') \rd p' = \int_{P^{\prime,\flat}(F) \backslash H^\flat(\bA)} f_{U',\psi_n}(h) \rd h = \int_{P^{\prime,\flat}(\bA) \backslash H^\flat(\bA)} \int_{[P^{\prime,\flat}]}f_{U',\psi_n}(ph) \rd p \rd h.
    \end{equation*}
    One check easily that $f_{U',\psi_n}$ is a $\theta$-cuspidal function on $P^\flat$. Therefore, by induction on $n$, we only need to check the case when $n=1$, in which case the proposition is trivial.
\end{proof}

\subsection{Vanishing of symplectic period}

For $g \in \GL_{2n}(\bA)$ and $\Phi \in \cS(\bA_n)$, we put
\[
    \Theta'(g,\Phi) = \lvert \det g \rvert^{\frac 12} \sum_{ 0 \neq v \in F_{2n} } \Phi(vg).
\]
Recall the Epstein series 
\begin{equation*}
    E(g,\Phi,s) = \int_{A_{\GL_{2n}}^\infty} \Theta'(ag,\Phi) \lvert \det ag \rvert^{s-\frac 12} \rd a, \quad g \in \GL_{2n}(\bA).
\end{equation*}
By ~\cite[Lemma 4.2]{JacquetShalika}, the integral defining $E(g,\Phi,s)$ is absolutely convergent when $\opn{Re}(s)>1$ and the map $s \mapsto E(\cdot,\Phi,s)$ extends to a meromorphic function valued in $\cT([\GL_{2n}])$ with simple poles at $s=0,1$ of respective residues $\Phi(0)$ and $\widehat{\Phi}(0)$.

\begin{lemma}
    Let $\varphi \in \cT_{\opn{cusp}}([\GL_{2n}])$. Thus, for any $s \ne \{0,1\}$, we have
    \begin{equation} \label{eq:symplectic_theta_period}
        \int_{[\Sp_{2n}]} \varphi(h) E(h,\Phi,s) = 0.
    \end{equation}
\end{lemma}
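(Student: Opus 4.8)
The plan is to unfold the Epstein series $E(\cdot,\Phi,s)$ along the $\Sp_{2n}(F)$-orbit decomposition of $F^{2n}\setminus\{0\}$ and feed the result into the Klingen--mirabolic vanishing of Lemma~\ref{lem:Klingen-mirabolic}.

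First I would dispose of the analytic continuation. Since $\varphi$ is cuspidal, its restriction to $[\Sp_{2n}]$ is rapidly decreasing, so $F\mapsto\int_{[\Sp_{2n}]}\varphi(h)F(h)\,\rd h$ is a continuous linear functional on $\cT([\GL_{2n}])$; composing it with the meromorphic family $s\mapsto E(\cdot,\Phi,s)\in\cT([\GL_{2n}])$, which is holomorphic off $s\in\{0,1\}$, shows that the left-hand side of \eqref{eq:symplectic_theta_period} is meromorphic in $s$ with possible poles only at $s=0,1$. Hence it suffices to prove the identity for $\opn{Re}(s)>1$, where the defining integral and series for $E(\cdot,\Phi,s)$ converge absolutely.

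For $\opn{Re}(s)>1$, the main reduction is to the claim that $\int_{[\Sp_{2n}]}\varphi(h)\Theta'(h,\Psi)\,\rd h=0$ for every $\Psi\in\cS(\bA^{2n})$ and every $\varphi\in\cT_{\opn{cusp}}([\GL_{2n}])$. Granting this, write $\Theta'(ah,\Phi)=|\det a|^{1/2}\Theta'(h,\Phi_a)$ for $h\in\Sp_{2n}(\bA)$ (using $|\det h|_{\bA}=1$), where $a=a_0I_{2n}\in A_{\GL_{2n}}^\infty$ and $\Phi_a(v):=\Phi(a_0v)\in\cS(\bA^{2n})$; then $\int_{[\Sp_{2n}]}\varphi(h)\Theta'(ah,\Phi)\,\rd h=|\det a|^{1/2}\int_{[\Sp_{2n}]}\varphi(h)\Theta'(h,\Phi_a)\,\rd h=0$, and a Fubini interchange legitimate for $\opn{Re}(s)>1$ (compare with the absolutely convergent Epstein series attached to a nonnegative Schwartz majorant of $|\Phi|$, together with the rapid decay of $\varphi$ on $[\Sp_{2n}]$) yields $\int_{[\Sp_{2n}]}\varphi(h)E(h,\Phi,s)\,\rd h=0$. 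To prove the claim I would invoke Witt's theorem: $\Sp_{2n}(F)$ acts transitively on $F^{2n}\setminus\{0\}$, so with $v_0:=e_{2n}$ and $R:=\Stab_{\Sp_{2n}}(v_0)$ one has $\sum_{0\ne v\in F^{2n}}\Psi(vh)=\sum_{\gamma\in R(F)\backslash\Sp_{2n}(F)}\Psi(v_0\gamma h)$, and on $\Sp_{2n}$ one has $\Theta'(h,\Psi)=\sum_{0\ne v}\Psi(vh)$. The structural point is that $R$ is exactly the Klingen--mirabolic subgroup $P'=P\cap\Sp_{2n}$ of $\S\ref{sec:cuspidal}$: the Klingen--mirabolic $P\subset\GL_{2n}$ has last row $e_{2n}$, so $P'\subseteq\Stab_{\Sp_{2n}}(e_{2n})=R$, and since $P'=H^\flat U'$ is connected of the same dimension as $R$, equality follows. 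As $v_0r=v_0$ for $r\in R(\bA)$ and $R$ is unimodular, unfolding and Fubini give
\[
\int_{[\Sp_{2n}]}\varphi(h)\Theta'(h,\Psi)\,\rd h=\int_{R(\bA)\backslash\Sp_{2n}(\bA)}\Psi(v_0h)\left(\int_{[P']}\varphi(p'h)\,\rd p'\right)\rd h.
\]
For each $h\in\Sp_{2n}(\bA)$ the function $p'\mapsto\varphi(p'h)$ on $[P]$ is Schwartz, being the restriction to $[P]$ of the rapidly decreasing cusp form $g\mapsto\varphi(gh)$, and it is $\theta$-cuspidal: by \eqref{eq:theta_cuspidal} this is the vanishing of $\int_{[N_{k,2n-2k,k}]}\varphi(np'h)\,\rd n$ for $1\le k\le n$, which holds since each $P_{k,2n-2k,k}$ is a proper parabolic subgroup of $\GL_{2n}$ and $\varphi$ is a cusp form. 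Lemma~\ref{lem:Klingen-mirabolic} then forces the inner integral to vanish, proving the claim and hence \eqref{eq:symplectic_theta_period}.

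\textbf{Main obstacle.} The difficulties are analytic bookkeeping rather than conceptual: one must check that cusp forms on $[\GL_{2n}]$ restrict to genuinely Schwartz (resp.\ rapidly decreasing) functions on $[P]$ and on $[\Sp_{2n}]$, so that all integrals above converge and the interchanges are justified for $\opn{Re}(s)>1$; and one must confirm that $\Stab_{\Sp_{2n}}(e_{2n})$ equals $P'$ rather than merely containing it (handled by the connectedness and dimension argument above, or, if one prefers, by unfolding one further step). Everything else is a formal unfolding that terminates in Lemma~\ref{lem:Klingen-mirabolic}.
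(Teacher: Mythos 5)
Your proof is correct and follows essentially the same route as the paper: unfold the Epstein series along the transitive $\Sp_{2n}(F)$-action on $F^{2n}\setminus\{0\}$, identify the stabilizer of $e_{2n}$ with the Klingen--mirabolic subgroup $P'$, and conclude via Lemma~\ref{lem:Klingen-mirabolic}, then propagate to all $s$ by meromorphic continuation. The only (harmless) difference is bookkeeping: you first reduce to the pointwise vanishing of $\int_{[\Sp_{2n}]}\varphi\cdot\Theta'(\cdot,\Psi)$ and then integrate over $A_{\GL_{2n}}^{\infty}$, whereas the paper absorbs the $A_{\GL_{2n}}^{\infty}$-integral into a single $P'(\bA)$-invariant function $F(g,\Phi,s)$ and unfolds $E$ directly, avoiding the extra Fubini step.
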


\begin{proof}
    The integral is absolutely convergent because $\varphi \in \cS([\Sp_{2n}])$.

    Recall that $P^\prime$ denotes the Klingen-mirabolic subgroup of $H=\Sp_{2n}$. Let $e_{2n}=(0, \cdots, 0, 1)$; then $P^\prime$ is the stabilizer of $e_{2n}$ in $\Sp_{2n}$.

    For $\opn{Re}(s)>1$, we put
    \begin{equation*}
        F(g,\Phi,s) = \lvert \det g \rvert^s \int_{A_{\GL_{2n}}^\infty} \Phi(ae_{2n} g) \lvert \det a \rvert^s \rd a.
    \end{equation*}
    The integral is absolutely convergent. Then for $\opn{Re}(s)>1$, we have 
    \begin{equation*}
           E(g,\Phi,s) = \sum_{\gamma \in P'(F) \backslash \Sp_{2n}(F)} F(\gamma g ,\Phi,s).
    \end{equation*}
    Therefore the integral ~\eqref{eq:symplectic_theta_period} can be written as
    \begin{equation*}
        \int_{P'(F) \backslash \Sp_{2n}(\bA)} \varphi(h) F(h,\Phi,s) \rd h. 
    \end{equation*}
    Note that for all $p' \in P'(\bA)$ and $h \in H(\bA)$, we have $F(p'h,\Phi,s) = F(h,\Phi,s)$. The integral above is therefore equal to
    \begin{equation*}
        \int_{P'(\bA) \backslash \Sp_{2n}(\bA)} \int_{[P']} \varphi(p'h) F(h,\Phi,s) \rd p' \rd h,
    \end{equation*}
    which vanishes by lemma ~\ref{lem:Klingen-mirabolic}. Since the integral ~\eqref{eq:symplectic_theta_period} is meromorphic in $s$, we see that it vanishes for any $s \ne \{0,1\}$.
\end{proof}

\begin{corollary} \label{cor:symplectic_period}
    Let $\varphi \in \cT_{\opn{cusp}}([\GL_{2n}])$, then
    \begin{equation*}
        \int_{[\Sp_{2n}]} \varphi(h) \rd h = 0.
    \end{equation*}
\end{corollary}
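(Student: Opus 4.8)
The plan is to deduce the corollary from the preceding lemma by taking a residue in the variable $s$ of the identity
\[
    \int_{[\Sp_{2n}]} \varphi(h) E(h,\Phi,s) \rd h = 0, \qquad s \notin \{0,1\},
\]
and exploiting the fact that the residue of $E(\cdot,\Phi,s)$ at $s=0$ is the (constant) function $\Phi(0)$.

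First I would record the analytic input, already cited above from \cite[Lemma 4.2]{JacquetShalika}: the Epstein series $s \mapsto E(\cdot,\Phi,s)$ is a meromorphic family valued in $\cT([\GL_{2n}])$ whose only singularities are simple poles at $s=0$ and $s=1$, with $\opn{Res}_{s=0} E(\cdot,\Phi,s)$ the constant function $\Phi(0)$. Since $\varphi$ is a cusp form, it is rapidly decreasing on $[\GL_{2n}]$ by \eqref{eq:cusp_form_rapid_decay}, so its restriction to $[\Sp_{2n}]$ lies in $\cS([\Sp_{2n}])$; consequently $g \mapsto \int_{[\Sp_{2n}]} \varphi(h) g(h)\rd h$ is a continuous linear functional on $\cT([\GL_{2n}])$ (restricted to $[\Sp_{2n}]$). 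Composing this functional with the meromorphic family $E(\cdot,\Phi,s)$ shows that $\Psi(s) := \int_{[\Sp_{2n}]} \varphi(h) E(h,\Phi,s)\rd h$ is a scalar meromorphic function of $s$ with at worst a simple pole at $s=0$, and that taking the residue commutes with the functional:
\[
    \opn{Res}_{s=0} \Psi(s) = \int_{[\Sp_{2n}]} \varphi(h)\,\opn{Res}_{s=0} E(h,\Phi,s)\,\rd h = \Phi(0) \int_{[\Sp_{2n}]} \varphi(h)\rd h .
\]

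By the preceding lemma, $\Psi(s) = 0$ for all $s\notin\{0,1\}$, hence $\Psi$ is the zero function and in particular $\opn{Res}_{s=0}\Psi(s) = 0$. Therefore $\Phi(0) \int_{[\Sp_{2n}]}\varphi(h)\rd h = 0$, and choosing $\Phi$ with $\Phi(0) \neq 0$ yields $\int_{[\Sp_{2n}]}\varphi(h)\rd h = 0$, as claimed. (Alternatively, and avoiding any topological-vector-space language, one can write $E(h,\Phi,s) = \Phi(0)/s + R(h,s)$ with $R(\cdot,s)$ holomorphic near $s=0$ and of uniform moderate growth, integrate against the rapidly decreasing $\varphi$ over $[\Sp_{2n}]$ — the integral converging absolutely and locally uniformly in $s$ — and read off the residue directly.)

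The only genuinely nontrivial point — and the step I would treat most carefully — is the interchange of the residue (a contour integral in $s$) with the integration over $[\Sp_{2n}]$; this is handled either by the continuity of the pairing of $\varphi$ against $\cT([\GL_{2n}])$ together with the fact that $E(\cdot,\Phi,s)$ is a genuinely $\cT([\GL_{2n}])$-valued meromorphic family, or, in the elementary approach, by the absolute and locally uniform convergence of $\int_{[\Sp_{2n}]} \varphi(h) R(h,s)\rd h$ near $s=0$, which follows from the moderate growth of $R(\cdot,s)$ and the Schwartz decay of $\varphi|_{[\Sp_{2n}]}$. Everything else is formal.
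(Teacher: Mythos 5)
Your argument is correct and is essentially the paper's own proof: take the residue of the identity $\int_{[\Sp_{2n}]}\varphi(h)E(h,\Phi,s)\,\rd h=0$ at $s=0$, using that $\operatorname{Res}_{s=0}E(\cdot,\Phi,s)=\Phi(0)$, and choose $\Phi$ with $\Phi(0)\neq 0$. The only difference is that you spell out the justification for interchanging the residue with the integral over $[\Sp_{2n}]$, which the paper leaves implicit.
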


\begin{proof}
    Take $\Phi \in \cS(\bA_n)$ such that $\Phi(0)=1$. Then, taking residue of ~\eqref{eq:symplectic_theta_period} at $s=0$ suffices. 
\end{proof}

\begin{remark}
    The proposition is also proved in ~\cite{JR}; however, the proof applies to cuspidal automorphic forms, and the proposition above can be applied to any cuspidal function.
\end{remark}

\subsection{Vanishing of certain $\Sp_{2n} \subset \GL_{2n+1}$-period}
For any $f \in \cT([G])$, we define
\[
    \cP_{H}(f)= \int_{[H]}f(h) \rd h
\]
if the integral is convergent.
\begin{proposition} \label{prop:main_vanishing}
    We have $\cP_{H}(f) = 0$ if either
    \begin{itemize}
        \item $f \in \cT_{\opn{cusp}}([G])$, or
        \item $f$ is a pseudo-Eisenstein series $E(g,\varphi)$ for $\varphi \in \cS_{\opn{cusp}}([G]_Q)$, where $Q$ is a maximal parabolic subgroup not associated to $P_{n,n+1}$.
    \end{itemize}
\end{proposition}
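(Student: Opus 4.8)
Throughout, the plan is to use the convergence results of \S\ref{sec:convergence} to make $\cP_H(f)$ an absolutely convergent unfolding, and then to dispose of the two bullets separately, each by reduction to Lemma~\ref{lem:Klingen-mirabolic} and/or to the vanishing of the symplectic period of $\GL_{2n}$-cusp forms (Corollary~\ref{cor:symplectic_period}). Indeed, in both cases $f$ has vanishing constant term along every parabolic of $G$ not associated to $P_{n,n+1}$, so Corollary~\ref{cor:Eisenstein_series_rapid_decrease} in the number field case, and Proposition~\ref{prop:function_field_compact_support} in the function field case, show that $f|_{[H]}$ is Schwartz; hence $\cP_H(f)$ converges absolutely and all the rearrangements below are legitimate.

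\emph{The cuspidal case.} Let $V$ be the unipotent radical of the mirabolic parabolic $P_{2n,1}$ of $G$, so $V\cong\Ga^{2n}$ is the last column and $\GL_{2n}$, hence $H$, normalizes $V$. Since $V=N_{P_{2n,1}}$ and $f$ is cuspidal, Fourier expansion over the compact abelian group $[V]$ gives $f=\sum_{0\ne\xi\in V^{\vee}(F)}f_\xi$ on $G(\bA)$, with $f_\xi(g)=\int_{[V]}f(vg)\psi_\xi(v)^{-1}\,\rd v$ and the $\xi=0$ term $f_{P_{2n,1}}=0$. Using the symplectic form to identify $V^{\vee}\cong F^{2n}$ $H$-equivariantly, $H(F)$ acts transitively on the nonzero characters (Witt's theorem), and I would choose $\xi_0$ with $\Stab_H(\xi_0)=P'$, the Klingen--mirabolic of $H$. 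Since $f$ is left $G(F)$-invariant, unfolding the sum collapses $\cP_H(f)$ to
\[
\cP_H(f)=\int_{P'(F)\backslash H(\bA)}f_{\xi_0}(h)\,\rd h=\int_{P'(\bA)\backslash H(\bA)}\Big(\int_{[P']}f_{\xi_0}(p'h)\,\rd p'\Big)\,\rd h .
\]
Now $\Stab_{\GL_{2n}}(\xi_0)$ is a mirabolic of $\GL_{2n}$ containing the Klingen--mirabolic $P$, so for each $h$ the function $p\mapsto f_{\xi_0}(ph)$ descends to $[P]$ and lies in $\cS([P])$ (uniformly up to rapid decay in $h$). The remaining point is to check it is $\theta$-cuspidal: by \eqref{eq:theta_cuspidal} it suffices that $\int_{[N_{j,2n-2j,j}]}f_{\xi_0}(u'ph)\,\rd u'=0$ for $1\le j\le n$, and since $N_{j,2n-2j,j}$ normalizes $V$ with $N_{j,2n-2j,j}\ltimes V=N_{P_{j,2n-2j,j,1}}$ and $\psi_{\xi_0}$ is $N_{j,2n-2j,j}$-invariant, this integral is a Fourier coefficient of $f$ along $N_{P_{j,2n-2j,j,1}}$ whose character is supported, modulo commutators, on the adjacent block-pair $(j,1)$; fusing those two Levi blocks realizes it as a constant term of $f$ along the proper parabolic $P_{j,2n-2j,j+1}$ of $G$, which vanishes by cuspidality. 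Lemma~\ref{lem:Klingen-mirabolic} then forces $\int_{[P']}f_{\xi_0}(p'h)\,\rd p'=0$ for every $h$, whence $\cP_H(f)=0$.

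\emph{The cuspidal Eisenstein case.} Write $Q=P_{k,2n+1-k}$ with $1\le k\le n-1$ (up to the association $P_{k,2n+1-k}\sim P_{2n+1-k,k}$ one may take $k\le n$, and excluding $P_{n,n+1}$ forces $k\le n-1$), and $f=\sum_{\gamma\in Q(F)\backslash G(F)}\varphi(\gamma\cdot)$ with $\varphi\in\cS_{\opn{cusp}}([G]_Q)$. Unfolding against the $H(F)$-orbits $\cO$ on $Q(F)\backslash G(F)=\Gr(k,2n+1)(F)$,
\[
\cP_H(f)=\sum_{\cO}\int_{\Sigma_\cO(F)\backslash H(\bA)}\varphi(\gamma_\cO h)\,\rd h,\qquad \Sigma_\cO=\gamma_\cO^{-1}Q\gamma_\cO\cap H=\Stab_H(W_\cO),
\]
$W_\cO$ being the $k$-plane attached to $\gamma_\cO$. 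I would classify the finitely many orbits by the linear algebra of a $k$-plane $W\subset F^{2n+1}$ relative to the degenerate symplectic form preserved by $H$ --- the relevant invariants being $\dim\big(W\cap\langle e_{2n+1}\rangle\big)$, the isometry type of $W\cap\langle e_1,\dots,e_{2n}\rangle$, and the residual position data --- a task parallel to, and simpler than, the analysis of \S\ref{sec:orbits}. For each orbit I would show the contribution vanishes by one of two mechanisms: either $\Sigma_\cO$ surjects onto a block $\GL_{2m}$ of $M_Q$ and integrating over the corresponding embedded $\Sp_{2m}$ produces $\int_{[\Sp_{2m}]}(\text{cusp form})=0$ by Corollary~\ref{cor:symplectic_period}; or, because $k\le n-1$, the orbit geometry forces $\gamma_\cO^{-1}N_Q\gamma_\cO\cap H$ to be large enough that integrating $\varphi(\gamma_\cO\cdot)$ over a suitable unipotent subgroup yields a constant term of $\varphi$ along a proper parabolic of $M_Q$, which vanishes since $\varphi$ is cuspidal on $[M_Q]$. (This is precisely where $k\le n-1$ enters: for $Q\sim P_{n,n+1}$ there remains one "generic" orbit, on which $\Sigma_\cO$ meets $M_Q$ in a form of the diagonal $\GL_n\times\GL_{n+1}$ and the contribution is the nonzero linear period of Theorem~\ref{thm:intro_main}(2).) Hence every term is $0$ and $\cP_H(f)=0$.

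The hard part is the cuspidal Eisenstein case: enumerating the $H$-orbits on $\Gr(k,2n+1)$ and verifying, orbit by orbit, which vanishing mechanism applies, while checking uniformly that the hypothesis $k\le n-1$ kills the surviving generic orbit. In the cuspidal case the only delicate point is the $\theta$-cuspidality verification above --- that each auxiliary Fourier coefficient of $f$ collapses to an honest constant term along a proper parabolic of $G$.
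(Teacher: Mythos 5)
Your cuspidal case follows the paper's proof essentially verbatim: Fourier-expand along the abelian unipotent $V = N_{P_{2n,1}}$ (the paper calls it $U_{2n}$), use transitivity of $H(F)$ on nonzero characters with stabilizer $P'$ to collapse to $\int_{P'(F)\backslash H(\bA)} f_{\xi_0}$, check $\theta$-cuspidality of $p\mapsto f_{\xi_0}(p)$ on $[P]$, and invoke Lemma~\ref{lem:Klingen-mirabolic}.

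For the pseudo-Eisenstein bullet, however, you depart from the paper. You propose to unfold $\cP_H(E(\cdot,\varphi))$ directly against the $H(F)$-orbits on $Q(F)\backslash G(F)=\Gr(k,2n+1)(F)$ and kill each orbit contribution separately; this is the philosophy of Proposition~\ref{prop:pseudo} (applied there only to $Q=P_{n,n+1}$), and carrying it out for an arbitrary maximal $Q$ would require a fresh orbit classification analogous to \S\ref{sec:orbits}, which you sketch but do not produce. The paper instead reuses the exact same mirabolic Fourier expansion along $U_{2n}$ as in the cuspidal case: by the constant-term formula \eqref{eq:constant_term_pseudo}, when $Q$ is maximal and not of type $(1,2n)$, $(2n,1)$, or $(n,n+1)$, the relevant constant terms $f_{P_{j,2n-2j,j+1}}$ ($1\le j\le n$) all vanish because $W(Q;P_{j,2n-2j,j+1})=\emptyset$, so $f_\psi$ is again $\theta$-cuspidal and Lemma~\ref{lem:Klingen-mirabolic} applies; in the remaining two types $(1,2n)$ and $(2n,1)$ the Fourier expansion picks up a single extra constant term $\varphi$, which is a cusp form on $\GL_{2n}$ and whose $\Sp_{2n}$-period vanishes by Corollary~\ref{cor:symplectic_period}. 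This route is both shorter and uniform with the first bullet, and avoids any new orbit analysis. Your alternative plan is plausible but is left at the level of a program — you would need to classify the orbits for every $k$, and note also that the two vanishing mechanisms you list may not suffice as stated (the paper's analogue for $Q=P_{n,n+1}$ already needs the mixed-period Corollary~\ref{mixed period}, which is neither a pure symplectic period nor a pure constant term). Finally, a small misstatement in your opening: for a pseudo-Eisenstein series from $Q$, the constant terms that vanish are those along parabolics not associated to $Q$ (not to $P_{n,n+1}$); what actually makes the convergence argument work is that every proper element of $\cF^H$ is non-maximal, so $f_P=0$ there.
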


\begin{proof}
    By the formula for the constant term of pseudo-Eisenstein series (see ~\eqref{eq:constant_term_pseudo}), in the first case or the second case when $Q$ is not of the type $(1,2n)$ or $(2n,1)$, we have the Fourier expansion
    \begin{equation*}
        f(g) = \sum_{\gamma \in \cP_{2n}(F) \backslash \GL_{2n}(F)} f_{\psi}(\gamma g),
    \end{equation*}
    where $\cP_{2n}$ denotes the mirabolic subgroup of $\GL_{2n}$ fixing $e_{2n}$ and 
    \begin{equation*}
        f_\psi(\gamma g) = \int_{[U_{2n}]} f(ug) \psi^{-1}(u_{2n}) \rd u,
    \end{equation*}
    where the last column of $u$ is given by ${}^t(u_1,\cdots,u_{2n},1)$.

    Note that under the right action of $H(F)$ on $\cP_{2n}(F) \backslash \GL_{2n}(F)$, there is only one orbit, and the stabilizer of $1$ is $P'$. Therefore, we can write
    \begin{equation*}
        \cP_H(f) = \int_{P'(F) \backslash H(\bA)} f_\psi(h) \rd h.
    \end{equation*}
    Using ~\eqref{eq:constant_term_pseudo} again, the map $[P] \ni p  \mapsto f_\psi(p)$ is a $\theta$-cuspidal function on $[P]$. Therefore, the proposition follows from lemma ~\ref{lem:Klingen-mirabolic}.

    We then treat the case when $Q$ is of type $(1,2n)$ or $(2n,1)$. We prove the case when $Q$ is of type $(2n,1)$, the case when $Q$ is of type $(1,2n)$ is the same.

    By ~\eqref{eq:constant_term_pseudo}, we have
    \[
        f(g) =  \sum_{\gamma \in \cP_{2n}(F) \backslash \GL_{2n}(F)} f_{\psi}(\gamma g) + \varphi(g).
    \]
    The argument above shows that
    \begin{equation*}
        \int_{[H]} \sum_{\gamma \in \cP_{2n}(F) \backslash \GL_{2n}(F)} f_{\psi}(\gamma h) \rd h = 0,
    \end{equation*}
    it remains to show
    \begin{equation*}
        \int_{[H]} \varphi(h) \rd h = 0.
    \end{equation*}
    This follows from corollary ~\ref{cor:symplectic_period}.
\end{proof}
\begin{remark} 
    \begin{itemize}
        \item  In theorem ~\ref{thm:main}, we will show that the same conclusion holds after we replace the pseudo-Eisenstein series by the actual Eisenstein series.
        \item  When $f$ is a cuspidal automorphic form, the vanishing is also stated in \cite[Theorem,(2)]{AGR}. In loc. cit., the authors claim that for a generic representation $\pi$ of $\GL_{2n+1}(k)$ over a local field $k$, we have $\Hom_{\Sp_{2n}(k)}(\pi,1) = 0$. This seems incorrect if one believes the local conjecture for this BZSV quadruple. Moreover, the computations in \S \ref{ssec:period_Eisenstein_series} imply that $\Hom_{\Sp_{2n}(\bA)}(\Pi, 1) \neq 0$ for certain global parabolic induction $\Pi$; thus, we have $\Hom_{\Sp_{2n}(F_v)}(\Pi_v, 1) \neq 0$ for each local place $v$.
    \end{itemize}
\end{remark}
We observe that the above proof applies also for $f \in \cS_{\opn{cusp}}([\cP_{2n+1}])$, where $\cP_{2n+1}$ denotes the mirabolic subgroup of $G=\GL_{2n+1}$.

Let $r \geq 1$ and $k \geq 0$, denote by $U_{2r+1,k}$ the subgroup of $N_{2r+1,k}$ consisting of elements of the form
\[
    \begin{pmatrix}
I_{2r} & 0 & * \\
 & 1 & 0 \\
 &  & I_k
\end{pmatrix}.
\]
Identify $\Sp_{2r}$ with the subgroup of $\GL_{2r+1} \subset M_{2r+1,k}$ at the top-left corner as usual. 
\begin{corollary}\label{mixed period}
    For any $f \in \cS_{\opn{cusp}}([\GL_{2r+k+1}])$, we have
    \[
        \int_{[\Sp_{2r} \ltimes U_{2r+1,k}]} f(h) \rd h =0.
    \]
\end{corollary}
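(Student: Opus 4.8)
The plan is to imitate the proof of Proposition~\ref{prop:main_vanishing}, carrying $U_{2r+1,k}$ along inside a Fourier coefficient of $f$. First, since $U_{2r+1,k}$ is normal in $\Sp_{2r}\ltimes U_{2r+1,k}$ and $\Sp_{2r}$ acts on it with trivial modulus, that group is unimodular, $[\Sp_{2r}\ltimes U_{2r+1,k}]$ has finite volume, and (as $f$ is rapidly decreasing, hence bounded) the period converges and equals $\int_{[\Sp_{2r}]}\int_{[U_{2r+1,k}]}f(uh)\,\rd u\,\rd h$. Set $V_*:=\{I+\sum_{i=1}^{2r}x_iE_{i,2r+1}\}$, so that $\mathfrak{U}:=V_*\cdot U_{2r+1,k}$ is the (abelian) unipotent radical of the standard parabolic $P_{2r,k+1}$ of $\GL_{2r+k+1}$. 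Fourier‑expanding the inner integral along $[V_*]$ and using $\int_{[\mathfrak{U}]}f(\mathfrak{u}g)\,\rd\mathfrak{u}=f_{P_{2r,k+1}}(g)=0$ to kill the trivial‑character term, one gets $\int_{[U_{2r+1,k}]}f(uh)\,\rd u=\sum_{0\ne\chi\in\widehat{[V_*]}}\int_{[U_{2r+1,k}]}f^{V_*}_{\chi}(uh)\,\rd u$, where $f^{V_*}_\chi$ is the Fourier coefficient of $f$ along $V_*$ against $\chi$.

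Next I would use that $\Sp_{2r}(F)$ acts transitively on the nonzero characters of $[V_*]\cong F^{2r}\backslash\bA^{2r}$. Taking $\chi_0$ to be $\psi$ evaluated on the $(2r,2r+1)$‑entry, one checks (via the covector description) that $\Stab_{\Sp_{2r}}(\chi_0)=P'$, the Klingen–mirabolic subgroup of $\Sp_{2r}$ of Lemma~\ref{lem:Klingen-mirabolic}. Unfolding the $\chi$‑sum against $\int_{[\Sp_{2r}]}$ — absorbing the left translate by $\gamma\in\Sp_{2r}(F)$ by conjugating $u$, which is measure‑preserving since $\Sp_{2r}$ has trivial modulus on $U_{2r+1,k}$ — collapses the period to $\int_{P'(F)\backslash\Sp_{2r}(\bA)}\tilde\phi(h)\,\rd h$, where $\tilde\phi(g):=\int_{[U_{2r+1,k}]}f^{V_*}_{\chi_0}(ug)\,\rd u$. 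Writing this as $\int_{P'(\bA)\backslash\Sp_{2r}(\bA)}\bigl(\int_{[P']}\tilde\phi(ph)\,\rd p\bigr)\rd h$, it then suffices — exactly as in Proposition~\ref{prop:main_vanishing} — to show that $\tilde\phi$ restricts to a Schwartz, $\theta$‑cuspidal function on $[P]$, $P$ the Klingen–mirabolic of $\GL_{2r}$, and to invoke Lemma~\ref{lem:Klingen-mirabolic}. (Equivalently, one may phrase the whole argument as: $\int_{[U_{2r+1,k}]}f(u\cdot)\,\rd u$ descends to a Schwartz cuspidal function on the mirabolic $[\cP_{2r+1}]$, and apply the version of Proposition~\ref{prop:main_vanishing} for $\cS_{\opn{cusp}}([\cP_{2r+1}])$ noted above.)

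The Schwartz property follows from the rapid decrease of $f$; the heart of the matter is the $\theta$‑cuspidality, i.e.\ $\int_{[N_{j,2r-2j,j}]}\tilde\phi(ng)\,\rd n=0$ for $j=1,\dots,r$. Here $N_{j,2r-2j,j}\subset\GL_{2r}$ normalizes both $V_*$ and $U_{2r+1,k}$ and fixes $\chi_0$ (its last row is $(0,\dots,0,1)$), and a sequence of substitutions turns this integral into $\int_{[N]}f(xg)\,\Psi^{-1}(x)\,\rd x$, where $N$ is the unipotent radical of $P_{j,2r-2j,j,k+1}$ and $\Psi$ is the character of $N$ supported on the single root $(2r,2r+1)$. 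Since $(2r,2r+1)$ is a minimal root of $N$, one has $N=U_{(2r,2r+1)}\ltimes\mathcal{V}$ and then $\mathcal{V}=N_P\rtimes\mathcal{W}$ with $P:=P_{j,2r-2j,j+k+1}$ and $\mathcal{W}$ the remaining root subgroup, both $U_{(2r,2r+1)}$ and $\mathcal{W}$ lying in the Levi of $P$; hence $\int_{[\mathcal{V}]}f(tvg)\,\rd v=\int_{[\mathcal{W}]}f_P(twg)\,\rd w=0$ by cuspidality of $f$, and the coefficient vanishes.

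The main obstacle is this last root‑exchange step, together with the right choice of $\chi_0$: it must be supported on exactly the entry $(2r,2r+1)$ — the unique minimal root of $N_{P_{j,2r-2j,j,k+1}}$ with target $2r+1$ and source $\le 2r$ — for only then is $\Psi$ a genuine character of $N$ whose coefficient reduces to a parabolic constant term of $f$; it is a pleasant coincidence that this $\chi_0$ has $\Sp_{2r}$‑stabilizer equal to the standard Klingen–mirabolic $P'$, so no conjugation is needed in the unfolding. All Fubini interchanges above are routine given the rapid decay of the cusp form $f$ and the compactness of the unipotent quotients $[V_*]$, $[U_{2r+1,k}]$, $[N]$, etc.
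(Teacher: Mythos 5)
Your proof is correct and, as you acknowledge in your parenthetical remark, is a fully-unrolled version of the paper's own argument. The paper simply defines $f_{U_{2r+1,k}}$, asserts that it lies in $\cS_{\opn{cusp}}([\cP_{2r+1}])$, and invokes the preceding observation that the proof of Proposition~\ref{prop:main_vanishing} applies to cuspidal Schwartz functions on the mirabolic; your Fourier expansion along $V_*$, the unfolding over $P'(F)\backslash\Sp_{2r}(F)$, and the verification of $\theta$-cuspidality of $\tilde\phi$ are precisely what that observation compresses.
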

\begin{proof}
    For $g \in \GL_{2r+k+1}(\bA)$, denote by $f_{U_{2r+1,k}}$ the function defined by
    \[
        f_{U_{2r+1,k}}(g):=\int_{[U_{2r+1,k}]}f(ug) \rd g.
    \]
    Then one checks easily that $f_{U_{2r+1,k}} \in \cS_{\opn{cusp}}([\cP_{2r+1}])$. By the previous remark, we have
    \[
         \int_{[\Sp_{2r} \ltimes U_{2r+1,k}]} f(h) \rd h= \int_{[\Sp_{2r}]}f_{U_{2r+1,k}} (h) \rd h =0. 
    \]
\end{proof}

\section{\texorpdfstring{Orbits of $\Sp_{2n}$ acting on the Grassmannian}{Orbits of Sp_2n  acting on the Grassmannian}}
\label{sec:orbits}

Throughout \S \ref{sec:orbits} and \S \ref{sec:proof}, we let $P := P_{n,n+1}$ be the maximal parabolic subgroup of type $(n,n+1)$ of $\GL_{2n+1}$ and let $P=M \cdot N$ denote its Levi decompostion. So we do not follow the notations in \S \ref{sec:cuspidal}: we hope that it will cause no confusion.

\subsection{\texorpdfstring{Classification of the orbits of $P(F) \backslash G(F) / H(F)$}{Classification of the orbits P \ G/H }}  

In this subsection, we give a complete classification of the double cosets $P(F) \backslash G(F)/H(F)$, or equivalently, the orbits of the $H(F)$-action on $P(F) \backslash G(F)$. Let $V_0=\langle e_1, \cdots ,e_n \rangle$ be the standard $n$-dimensional subspace. Then the map
$ g \mapsto g^{-1}V_0$ induces a bijection between $P(F) \backslash G(F)$ and the set $\Gr_n:= \Gr(n,2n+1)(F)$ of $n$-dimensional subspaces of $F^{2n+1}$. So the double coset $P(F) \backslash G(F) / H(F)$ can be identified with the orbits of $\Gr_n(F)$ under the action of $\Sp_{2n}(F)$. The orbits can be grouped into 4 different types, which we call type \RNum{1}, \RNum{2}, \RNum{3}, \RNum{4} respectively. For each $\gamma \in P(F) \backslash G(F)$, we denote by $H_{\gamma}$ the stabilizer of $\gamma$ in $H(F)$.

Recall that $F^{2n}$ denotes the subspace of $F^{2n+1}$ with the last coordinate 0. Let $p:F^{2n+1} \to F^{2n}$ denote the projection to the first $2n$ coordinates. For a subspace $V \subset F^{2n+1}$, we denote by $i(V):= V \cap F^{2n}$ and denote by $p(V)$ the image of $V$ under $p$. Let $W$ be a subspace of $F^{2n}$, we denote by $r(W)$ the Witt index of $W$.

\begin{enumerate}
    \item(Type \RNum{1} orbit) $V \subset F^{2n}$, then by Witt's theorem, the Witt index $r:=r(V)$ is the only invariant of the $\Sp_{2n}(F)$ orbit of $V$ in $\Gr_n(F)$. In other words, if $r(V)=r(W)$ for $W \in \Gr_n(F)$, then there exists $h \in \Sp_{2n}(F)$ such that $h^{-1}V=W$. It's clear $r \in \{ 0,1,\cdots,[\frac n2] \}$

    \item(Type \RNum{2} orbit) $e_{2n+1} \in V$. Then by Witt's theorem again, the Witt index $r:=r(i(V))=r(p(V))$ is the only invariant of the $\Sp_{2n}(F)$ orbit of $V$ in $\Gr_n(F)$. In this case, $r \in \{0,1,\cdots,[\frac{n-1}{2}]\}$ 

    \item(Type \RNum{3} and \RNum{4} orbits) $e_{2n+1} \not \in V$ and $V \not \subset F^{2n}$. In this case, we have $\dim p(V) = \dim i(V) + 1$. Type \RNum{3} and \RNum{4} orbits correspond to whether $r(p(V)) = r(i(V))$ or $r(p(V)) = r(i(V))+1$ respectively. Consider the flag $0 \subset i(V) \subset p(V) \subset V$, it is easy to apply the next lemma to see that
        \begin{itemize}
            \item For $V$ as above. The subspace $V$ with $r(p(V)) = r(i(V)) = r$ form a single $\Sp_{2n}(F)$ orbit. The possible values of $r$ are the set $\{0,1,\cdots,[\frac{n-1}{2}]\}$. We call this a type \RNum{3} orbit.

            \item Similarly, for $V$ with $r(p(V)) = r(i(V)) + 1 = r$ form a single $\Sp_{2n}(F)$ orbit. The possible values of $r$ are the set $\{1,2,\cdots,[\frac{n}{2}]\}$. We call this a type \RNum{4} orbit. 
        \end{itemize}
    
\end{enumerate}
The following lemma is a basic fact in linear algebra, see e.g. ~\cite{Shmelev}.

Recall that a Darboux basis for a non-degenerate symplectic vector space of dimension $2n$ is a basis $\langle e_1,\cdots,e_n,f_1,\cdots,f_n \rangle$ such that $\langle e_i,e_j \rangle = \langle f_i,f_k \rangle = 0$ and $\langle e_i,f_j \rangle = \delta_{ij}$.  

\begin{lemma} \label{lem:Darboux_basis}
    Let $V$ be a symplectic space over a field $F$ with $\opn{char}(F) \ne 2$. Let $0 \subset V_1 \subset \cdots V_{k} \subset V$ be a flag of vector subspaces of $V$. Then there exists a Darboux basis $\cB = \{e_1,\cdots,e_n,f_1,\cdots,f_n\}$ such that each $V_i$ is generated by a subset of $\cB$. 
\end{lemma}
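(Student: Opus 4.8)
The plan is to prove this by induction on $\dim V$, at each step splitting off a carefully chosen symplectic plane. The base cases ($\dim V = 0$, or the flag being just $0 \subset V$, where any Darboux basis works) are immediate, so assume $\dim V \ge 2$ and that the lemma holds in smaller dimension. Since enlarging the flag only makes the conclusion stronger, I may first delete repeated terms and append $0$ and $V$, so as to assume the flag is $0 = V_0 \subsetneq V_1 \subsetneq \cdots \subsetneq V_k = V$.

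The key step is to find a $2$-dimensional symplectic subspace $P$, equipped with a symplectic basis $\{v,u\}$, that is \emph{compatible with the flag}, in the sense that $V_i \cap P \in \{0,\langle v\rangle,\langle u\rangle,P\}$ and $V_i = (V_i\cap P)\oplus(V_i\cap P^\perp)$ for every $i$. Once such a $P$ is found, the subspaces $V_i \cap P^\perp$ form a flag in the symplectic space $P^\perp$, which has dimension $\dim V-2$; applying the inductive hypothesis gives a Darboux basis $\mathcal B'$ of $P^\perp$ adapted to this flag, and then $\mathcal B' \cup \{v,u\}$ is the required Darboux basis of $V$, because $V = P \perp P^\perp$ and each $V_i$ decomposes as a direct sum of a subset of $\{v,u\}$ and a subspace spanned by a subset of $\mathcal B'$.

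To produce $P$: pick any nonzero $v \in V_1$, and let $m$ be the largest index with $v \perp V_m$ (so $0 \le m \le k-1$, using $V_0 = 0$ and the fact that $v \not\perp V = V_k$). Since $v\not\perp V_{m+1}$, choose $u \in V_{m+1}$ with $\langle v,u\rangle = 1$ and set $P := \langle v,u\rangle$. The verification of compatibility is then routine linear algebra: for $1 \le i\le m$ one has $v\in V_1\subseteq V_i$ but $u\notin V_i$ (otherwise $u\in V_m$, contradicting $v \perp V_m$ and $\langle v,u\rangle=1$), so $V_i\cap P=\langle v\rangle$, while $V_i\subseteq V_m\subseteq v^\perp$ forces $V_i\cap P^\perp=V_i\cap u^\perp$ to be a codimension-one complement of $\langle v\rangle$ in $V_i$; for $i\ge m+1$ one has $P\subseteq V_i$ with $P$ nondegenerate, so $V_i=P\oplus(V_i\cap P^\perp)$.

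The one point that genuinely needs care — and where a naive argument breaks — is precisely this choice of $u$: it must be taken \emph{inside} $V_{m+1}$, not as an arbitrary symplectic partner of $v$ in $V$, so that the plane $P$ does not straddle the flag and the induced flag on $P^\perp$ splits off cleanly. Everything else reduces to standard facts about alternating forms (existence of Darboux bases, and $V = P \oplus P^\perp$ for a nondegenerate subspace $P$), and the hypothesis $\operatorname{char} F \ne 2$ enters only through that standard theory.
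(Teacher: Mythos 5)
Your proof is correct. The paper itself supplies no proof of this lemma (it only cites a reference, Shmelev), so there is no author argument to compare against; your inductive scheme—splitting off a symplectic plane $P=\langle v,u\rangle$ with $v\in V_1$ and $u$ chosen inside $V_{m+1}$, where $m$ is the largest index with $v\perp V_m$, then passing to the flag $\{V_i\cap P^\perp\}$ on $P^\perp$—is complete, and the compatibility check $V_i=(V_i\cap P)\oplus(V_i\cap P^\perp)$ is verified accurately in both the $i\le m$ case (where $V_i\cap P=\langle v\rangle$ and $V_i\subseteq v^\perp$ forces $V_i\cap P^\perp=V_i\cap u^\perp$, a hyperplane missing $v$) and the $i\ge m+1$ case (where $P\subseteq V_i$ is nondegenerate). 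One incidental remark: the hypothesis $\operatorname{char}F\ne 2$ is not actually used anywhere in your argument, nor is it needed for the standard facts you invoke, provided ``symplectic'' is taken to mean a nondegenerate alternating form.
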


\subsection{Representatives and stabilizers} \label{ssec:list_of_orbit}
Throughout this section, we write an element $h \in H_\gamma$ as a $2n \times 2n$-matrix to emphasize that it lies in the stabilizer under the action of $H=\Sp_{2n}$. We write the element $\gamma h \gamma^{-1}$ as a $(2n+1)\times (2n+1)$-matrix to make it easy to see its factors in the Levi decomposition of $P$.

\subsubsection{Type \RNum{1} orbits}


Consider type \RNum{1} orbits, for any $0 \leq r \leq [\frac{n}{2}]$, we choose a representative
\[
    V_\gamma=\langle e_1, \cdots, e_{n-2r}, e_{n-r+1}, \cdots, e_n, e_{n+1}, \cdots , e_{n+r}\rangle,
\]
and
\begin{align*}
    \gamma^{-1}=\begin{pmatrix}
I_{n-2r} &  &  &  &  \\
 &  &  & I_r &  \\
 &  & I_r &  &  \\
 & I_r &  &  &  \\
 &  &  &  & I_{n-r+1}
\end{pmatrix}
\end{align*}
such that $\gamma^{-1}V=V_\gamma$. Then $\gamma=\gamma^{-1}$.

\begin{num}
\item \label{eq:main_orbit} If $r=0$, then it is clear that $H_{\gamma}=P^\prime$, the Siegel parabolic subgroup fixing the maximal isotropic subspace $V$. We call this orbit the \emph{main orbit}.
\end{num}

If $r>0$, consider the isotropic subspace $\langle e_1 \cdots, e_{n-2r}\rangle$ again and consider the associated parabolic subgroup $Q_{\gamma}=M_{\gamma} \cdot N_{\gamma}$. Again we have
\[  
    H_{\gamma}=(H_{\gamma} \cap M_{\gamma}) \cdot N_{\gamma}.
\]
Denote by $U_{\gamma}$ the normal subgroup of $N_{\gamma}$ consisting of elements of the following form
\[
    h=\begin{pmatrix}
I_{n-2r} & X &  &  & Y & Z \\
 & I_r &  &  &  & J_r {}^tYJ_{n-2r} \\
 &  & I_r &  &  &  \\
 &  &  & I_r &  &  \\
 &  &  &  & I_r & -J_r{}^tXJ_{n-2r} \\
 &  &  &  &  & I_{n-2r}
\end{pmatrix}.
\]
One can easily compute
\[
    \gamma h \gamma^{-1}=\begin{pmatrix}
I_{n-2r} &  & X & Y & Z &  \\
 & I_{2r} &  &  &  &  \\
 &  & I_r &  & J_r{}^tYJ_{n-2r} &  \\
 &  &  & I_r & -J_r{}^tXJ_{n-2r} &  \\
 &  &  &  & I_{n-2r} &  \\
 &  &  &  &  & 1
\end{pmatrix}.
\]

On the other hand, we have
\[
    M_\gamma=\GL(\langle e_1, \cdots, e_{n-2r}\rangle) \times \Sp(\langle e_{n-2r+1}, \cdots, e_n, e_{n+1}, \cdots, e_{n+2r}\rangle).
\]
It's clear $\GL(\langle e_1, \cdots, e_{n-2r}\rangle) \subset H_{\gamma}$. So we have
\[
    H_{\gamma} \cap M_{\gamma}=\GL(\langle e_1, \cdots, e_{n-2r}\rangle) \times (H_{\gamma} \cap \Sp(\langle e_{n-2r+1}, \cdots, e_n, e_{n+1}, \cdots, e_{n+2r}\rangle) ).
\]
It's clear $H_{\gamma} \cap \Sp(\langle e_{n-2r+1}, \cdots, e_n, e_{n+1}, \cdots, e_{n+2r}\rangle)$ stabilizes $\langle e_{n-r+1}, \cdots, e_n, e_{n+1}, \cdots, e_{n+r} \rangle$. Then it follows
\begin{align*}
    &H_{\gamma} \cap \Sp(\langle e_{n-2r+1}, \cdots, e_n, e_{n+1}, \cdots, e_{n+2r}\rangle) \\
    &= \Sp(\langle e_{n-2r+1}, \cdots, e_{n-r}, e_{n+r+1}, \cdots, e_{n+2r} \rangle) \times \Sp(\langle e_{n-r+1}, \cdots, e_n, e_{n+1}, \cdots, e_{n+r} \rangle).
\end{align*}

Denote the subgroup $\Sp(\langle e_{n-2r+1}, \cdots, e_{n-r}, e_{n+r+1}, \cdots, e_{n+2r}\rangle)$ by $S_\gamma$. Then $S_\gamma$ is normal in $H_\gamma \cap M_\gamma$. It's easy to see $S_\gamma$ normalizes $U_\gamma$; thus, $S_\gamma \cdot U_\gamma$ is a subgroup of $H_{\gamma}$. Also, by direct computation, we can show that
\[
    S_\gamma \cdot U_\gamma \lhd H_{\gamma}=(H_{\gamma} \cap M_{\gamma}) \cdot N_{\gamma}.
\]
Write elements of $S_\gamma$ in the following form
\[
    h=\begin{pmatrix}
I_{n-2r} &  &  &  &  \\
 & A &  & B &  \\
 &  & I_{2r} &  &  \\
 & C &  & D &  \\
 &  &  &  & I_{n-2r}
\end{pmatrix}.
\]
One can easily compute
\[
    \gamma h \gamma^{-1}=\begin{pmatrix}
I_n &  &  &  \\
 & A & B &  \\
 & C & D &  \\
 &  &  & I_{n-2r+1}
\end{pmatrix}.
\]


\subsubsection{Type \RNum{2} orbits}

Consider type \RNum{2} orbits, for any $0 \leq r \leq [\frac{n-1}{2}]$, we choose a representative
\[
    V_\gamma=\langle e_2, \cdots, e_{n-2r}, e_{n-r+1}, \cdots, e_n, e_{n+1}, \cdots , e_{n+r}, e_{2n+1}\rangle,
\]
and
\begin{align*}
\gamma^{-1}= \begin{pmatrix}
 &  &  &  &  &  & 1 \\
 & I_{n-2r-1} &  &  &  &  &  \\
 &  &  &  & I_r &  &  \\
 &  &  & I_r &  &  &  \\
 &  & I_r &  &  &  &  \\
 &  &  &  &  & I_{n-r} &  \\
1 &  &  &  &  &  & 
\end{pmatrix}
\end{align*}
such that $\gamma^{-1}V=V_\gamma$. Then $\gamma=\gamma^{-1}$.

If $r>0$, denote by $U_{\gamma}$ the normal subgroup of $N_{\gamma}$ consisting of elements of the following form
\begin{align*}
    h=\begin{pmatrix}
1 &  &  &  &  &  &  &  \\
  & I_{n-2r-1} &  & X & Y &   & Z &  \\
  &  & I_r &  &  &   &   &  \\
 &   &  & I_r &  &   & J_r{}^tYJ_{n-2r-1} &  \\
 &  &  &  & I_r &  & -J_r{}^tXJ_{n-2r-1} &  \\
 &  &  &  &  & I_r &  &  \\
 &  &  &  &  &  & I_{n-2r-1} &  \\
 &  &  &  &  &  &  & 1
\end{pmatrix}.
\end{align*}
One can easily compute
\begin{align*}
    \gamma h \gamma^{-1}=\begin{pmatrix}
1 &  &  &  &  &  &  \\
 & I_{n-2r-1} & Y & X &  & Z &  \\
 &  & I_r &  &  & -J_r{}^tXJ_{n-2r-1} &  \\
 &  &  & I_r &  & J_r{}^tYJ_{n-2r-1} &  \\
 &  &  &  & I_{2r} &  &  \\
 &  &  &  &  & I_{n-2r-1} &  \\
 &  &  &  &  &  & I_2
\end{pmatrix}.
\end{align*}
On the other hand, we have
\[
    M_\gamma=\GL(\langle e_2, \cdots, e_{n-2r}\rangle) \times \Sp(\langle e_1, e_{n-2r+1}, \cdots, e_n, e_{n+1}, \cdots, e_{n+2r}, e_{2n}\rangle).
\]
It's clear $\GL(\langle e_2, \cdots, e_{n-2r}\rangle) \subset H_{\gamma}$. So we have
\[
    H_{\gamma} \cap M_{\gamma}=\GL(\langle e_2, \cdots, e_{n-2r}\rangle) \times (H_{\gamma} \cap \Sp(\langle e_1, e_{n-2r+1}, \cdots, e_n, e_{n+1}, \cdots, e_{n+2r}, e_{2n}\rangle) ).
\]
It's clear $H_{\gamma} \cap \Sp(\langle e_1, e_{n-2r+1}, \cdots, e_n, e_{n+1}, \cdots, e_{n+2r}, e_{2n}\rangle)$ stabilizes $\langle e_{n-r+1}, \cdots, e_n, e_{n+1}, \cdots, e_{n+r} \rangle$. Then it follows
\begin{align*}
    &H_{\gamma} \cap \Sp(\langle e_1, e_{n-2r+1}, \cdots, e_n, e_{n+1}, \cdots, e_{n+2r}, e_{2n}\rangle)\\
    &=\Sp(\langle e_{n-r+1}, \cdots, e_n, e_{n+1}, \cdots, e_{n+r} \rangle) \times \Sp(\langle e_1, e_{n-2r+1}, \cdots, e_{n-r}, e_{n+r+1}, \cdots, e_{n+2r},e_{2n}).
\end{align*}
Denote the subgroup $\Sp(\langle e_{n-r+1}, \cdots, e_n, e_{n+1}, \cdots, e_{n+r} \rangle)$ by $S_\gamma$. Then we can show again
\[
    S_\gamma \cdot U_\gamma \lhd H_\gamma=(H_\gamma \cap M_\gamma) \cdot N_\gamma.
\]
Write elements of $S_\gamma$ in the following form
\[
    h=\begin{pmatrix}
I_{n-r} &  &  &  \\
 & A & B &  \\
 & C & D &  \\
 &  &  & I_{n-r}
\end{pmatrix}.
\]
One can easily compute
\[
    \gamma h \gamma^{-1}=\begin{pmatrix}
I_{n-2r} &  &  &  \\
 & D & C &  \\
 & B & A &  \\
 &  &  & I_{n+1}
\end{pmatrix}.
\]

If $r=0$, we let $S_\gamma=1$ and $U_\gamma=N_\gamma$. Write elements in $N_\gamma$ in the following form
\[
    h=\begin{pmatrix}
1 &  & {}^tyJ_{n-1} &  \\
x & I_{n-1} & Z & y \\
 &  & I_{n-1} &  \\
 &  & -{}^txJ_{n-1} & 1
\end{pmatrix}.
\]
One can easily compute
\[
    \gamma h \gamma^{-1}=\begin{pmatrix}
1 &  &  &  &  \\
 & I_{n-1} & Z & y & x \\
 &  & I_{n-1} &  &  \\
 &  & -{}^txJ_{n-1} & 1 &  \\
 &  & {}^tyJ_{n-1} &  & 1
\end{pmatrix}.
\]
\subsubsection{Type \RNum{3} orbits}
Consider type \RNum{3} orbits, for any $0 \leq r \leq [\frac{n-1}{2}]$, we choose a representative
\[
    V_\gamma=\langle e_1+ e_{2n+1}, e_2, \cdots, e_{n-2r}, e_{n-r+1}, \cdots, e_n, e_{n+1}, \cdots , e_{n+r}\rangle,
\]
and
\begin{align*}
    \gamma^{-1}=\begin{pmatrix}
1 &  &  &  &  &  &  \\
 & I_{n-2r-1} &  &  &  &  &  \\
 &  &  &  & I_r &  &  \\
 &  &  & I_r &  &  &  \\
 &  & I_r &  &  &  &  \\
 &  &  &  &  & I_{n-r} &  \\
1 &  &  &  &  &  & 1
\end{pmatrix}
\end{align*}
such that $\gamma^{-1}V=V_\gamma$. Then
\begin{align*}
    \gamma=\begin{pmatrix}
1 &  &  &  &  &  &  \\
 & I_{n-2r-1} &  &  &  &  &  \\
 &  &  &  & I_r &  &  \\
 &  &  & I_r &  &  &  \\
 &  & I_r &  &  &  &  \\
 &  &  &  &  & I_{n-r} &  \\
-1 &  &  &  &  &  & 1
\end{pmatrix}.
\end{align*}

Consider the isotropic subspace $\langle e_2 \cdots, e_{n-2r}\rangle$ again and consider the associated parabolic subgroup $Q_{\gamma}=M_{\gamma} \cdot N_{\gamma}$. Again we have
\[  
    H_{\gamma}=(H_{\gamma} \cap M_{\gamma}) \cdot N_{\gamma}.
\]
If $r>0$, denote by $U_{\gamma}$ the normal subgroup of $N_{\gamma}$ consisting of elements of the following form
\begin{align*}
    h=\begin{pmatrix}
1 &  &  &  &  &  &  &  \\
  & I_{n-2r-1} &  & X & Y &   & Z &  \\
  &  & I_r &  &  &   &   &  \\
 &   &  & I_r &  &   & J_r{}^tYJ_{n-2r-1} &  \\
 &  &  &  & I_r &  & -J_r{}^tXJ_{n-2r-1} &  \\
 &  &  &  &  & I_r &  &  \\
 &  &  &  &  &  & I_{n-2r-1} &  \\
 &  &  &  &  &  &  & 1
\end{pmatrix}.
\end{align*}
One can easily compute
\begin{align*}
    \gamma h \gamma^{-1}=\begin{pmatrix}
1 &  &  &  &  &  &  \\
 & I_{n-2r-1} & Y & X &  & Z &  \\
 &  & I_r &  &  & -J_r{}^tXJ_{n-2r-1} &  \\
 &  &  & I_r &  & J_r{}^tYJ_{n-2r-1} &  \\
 &  &  &  & I_{2r} &  &  \\
 &  &  &  &  & I_{n-2r-1} &  \\
 &  &  &  &  &  & I_2
\end{pmatrix}.
\end{align*}
On the other hand, we have
\[
    M_\gamma=\GL(\langle e_2, \cdots, e_{n-2r}\rangle) \times \Sp(\langle e_1, e_{n-2r+1}, \cdots, e_n, e_{n+1}, \cdots, e_{n+2r}, e_{2n}\rangle).
\]
It's clear $\GL(\langle e_2, \cdots, e_{n-2r}\rangle) \subset H_{\gamma}$. So we have
\[
    H_{\gamma} \cap M_{\gamma}=\GL(\langle e_2, \cdots, e_{n-2r}\rangle) \times (H_{\gamma} \cap \Sp(\langle e_1, e_{n-2r+1}, \cdots, e_n, e_{n+1}, \cdots, e_{n+2r}, e_{2n}\rangle) ).
\]
Note the subgroup $H_{\gamma} \cap \Sp(\langle e_1, e_{n-2r+1}, \cdots, e_n, e_{n+1}, \cdots, e_{n+2r}, e_{2n}\rangle)$ stabilizes the subspace $\langle e_1+e_{2n+1}, e_{n-r+1}, \cdots, e_n, e_{n+1}, \cdots, e_{n+r} \rangle$. Then it also stabilizes $\langle e_{n-r+1}, \cdots, e_n, e_{n+1}, \cdots, e_{n+r} \rangle$, which is the intersection of $\langle e_1+e_{2n+1}, e_{n-r+1}, \cdots, e_n, e_{n+1}, \cdots, e_{n+r} \rangle$ with $F^{2r}$. By similar arguments, we can show
\begin{align*}
    &H_{\gamma} \cap \Sp(\langle e_1, e_{n-2r+1}, \cdots, e_n, e_{n+1}, \cdots, e_{n+2r}, e_{2n}\rangle)\\
    &=\Sp(\langle e_{n-r+1}, \cdots, e_n, e_{n+1}, \cdots, e_{n+r} \rangle) \times (H_\gamma \cap \Sp(\langle e_1, e_{n-2r+1}, \cdots, e_{n-r}, e_{n+r+1}, \cdots, e_{n+2r},e_{2n})).
\end{align*}
Denote the subgroup $\Sp(\langle e_{n-r+1}, \cdots, e_n, e_{n+1}, \cdots, e_{n+r} \rangle)$ by $S_\gamma$. Then $S_\gamma$ is normal in $H_{\gamma} \cap M_{\gamma}$. It's easy to see $S_\gamma$ normalizes $U_\gamma$; thus, $S_\gamma \cdot U_\gamma$ is a subgroup of $H_{\gamma}$. Also, by direct computation, we can show that
\[
    S_\gamma \cdot U_\gamma \lhd H_\gamma=(H_\gamma \cap M_\gamma) \cdot N_\gamma.
\]
Write elements of $S_\gamma$ in the following form
\[
    h=\begin{pmatrix}
I_{n-r} &  &  &  \\
 & A & B &  \\
 & C & D &  \\
 &  &  & I_{n-r}
\end{pmatrix}.
\]
One computes easily
\[
    \gamma h \gamma^{-1}=\begin{pmatrix}
I_{n-2r} &  &  &  \\
 & D & C &  \\
 & B & A &  \\
 &  &  & I_{n+1}
\end{pmatrix}.
\]

If $r=0$, denote by $U_\gamma$ the normal subgroup of $N_\gamma$ consisting elements of the following form
\[
    h=\begin{pmatrix}
1 &  & {}^ty J_{n-1} &  \\
 & I_{n-1} & Z & y \\
 &  & I_{n-1} &   \\
 &  &  & 1
\end{pmatrix}.
\]
One computes easily
\[
    \gamma h \gamma^{-1}=\begin{pmatrix}
1 &  & {}^ty J_{n-1} &  &  \\
 & I_{n-1} & Z & y &  \\
 &  & I_{n-1} &   &  \\
 &  &  & 1 &  \\
 &  & -{}^ty J_{n-1} &  & 1
\end{pmatrix}.
\]
Again, one obtains
\[
    H_\gamma \cap M_\gamma=\GL(\langle e_2, \cdots, e_n \rangle) \times (H_\gamma \cap \Sp(\langle e_1, e_{2n} \rangle)).
\]
This implies the subgroup $S_\gamma=H_\gamma \cap \Sp(\langle e_1, e_{2n} \rangle)$ is normal in $H_\gamma \cap M_\gamma$. And we have again
\[
    S_\gamma \cdot U_\gamma \lhd H_\gamma=(H_\gamma \cap M_\gamma) \cdot N_\gamma.
\]
Write elements of $S_\gamma$ in the following form
\[
    h=\begin{pmatrix}
1 &  & s \\
 & I_{2n-2} &  \\
 &  & 1
\end{pmatrix}.
\]
One computes easily
\[
    \gamma h \gamma^{-1}=\begin{pmatrix}
1 &  & s &  \\
 & I_{2n-2} &  &  \\
 &  & 1 &  \\
 &  & -s & 1
\end{pmatrix}.
\]
\subsubsection{Type \RNum{4} orbits}
Consider type \RNum{4} orbits, for any $1 \leq r \leq [\frac{n}{2}]$, we choose a representative
\[
    V_\gamma=\langle e_1, \cdots, e_{n-2r}, e_{n-r+1}, \cdots, e_n, e_{n+1}+e_{2n+1}, \cdots , e_{n+r}\rangle,
\]
and
\begin{align*}
    \gamma^{-1}=\begin{pmatrix}
I_{n-2r} &  &  &  &  &  &  \\
 &  &  &  & I_r &  &  \\
 &  &  & I_r &  &  &  \\
 & 1 &  &  &  &  &  \\
 &  & I_{r-1} &  &  &  &  \\
 &  &  &  &  & I_{n-r} &  \\
 & 1 &  &  &  &  & 1
\end{pmatrix}
\end{align*}
such that $\gamma^{-1}V=V_\gamma$. Then
\[
    \gamma=\begin{pmatrix}
I_{n-2r} &  &  &  &  &  &  \\
 &  &  & 1 &  &  &  \\
 &  &  &  & I_{r-1} &  &  \\
 &  & I_r &  &  &  &  \\
 & I_r &  &  &  &  &  \\
 &  &  &  &  & I_{n-r} &  \\
 &  &  & -1 &  &  & 1
\end{pmatrix}.
\]

Consider the isotropic subspace $\langle e_1 \cdots, e_{n-2r}\rangle$ again and consider the associated parabolic subgroup $Q_{\gamma}=M_{\gamma} \cdot N_{\gamma}$. Again we have
\[  
    H_{\gamma}=(H_{\gamma} \cap M_{\gamma}) \cdot N_{\gamma}.
\]
Denote by $U_{\gamma}$ the normal subgroup of $N_{\gamma}$ consisting of elements of the following form
\[
    h=\begin{pmatrix}
I_{n-2r} & X &  &  & Y & Z \\
 & I_r &  &  &  & J_r {}^tYJ_{n-2r} \\
 &  & I_r &  &  &  \\
 &  &  & I_r &  &  \\
 &  &  &  & I_r & -J_r{}^tXJ_{n-2r} \\
 &  &  &  &  & I_{n-2r}
\end{pmatrix}
\].
One computes easily
\[
    \gamma h \gamma^{-1}=\begin{pmatrix}
I_{n-2r} &  & X & Y & Z &  \\
 & I_{2r} &  &  &  &  \\
 &  & I_r &  & J_r{}^tYJ_{n-2r} &  \\
 &  &  & I_r & -J_r{}^tXJ_{n-2r} &  \\
 &  &  &  & I_{n-2r} &  \\
 &  &  &  &  & 1
\end{pmatrix}.
\]

On the other hand, we have
\[
    M_\gamma=\GL(\langle e_1, \cdots, e_{n-2r}\rangle) \times \Sp(\langle e_{n-2r+1}, \cdots, e_n, e_{n+1}, \cdots, e_{n+2r}\rangle).
\]
It's clear $\GL(\langle e_1, \cdots, e_{n-2r}\rangle) \subset H_{\gamma}$. So we have
\[
    H_{\gamma} \cap M_{\gamma}=\GL(\langle e_1, \cdots, e_{n-2r}\rangle) \times (H_{\gamma} \cap \Sp(\langle e_{n-2r+1}, \cdots, e_n, e_{n+1}, \cdots, e_{n+2r}\rangle) ).
\]
Similarly, $H_{\gamma} \cap \Sp(\langle e_{n-2r+1}, \cdots, e_n, e_{n+1}, \cdots, e_{n+2r}\rangle)$ stabilizes $\langle e_{n-r+1}, \cdots, e_n, e_{n+1}, \cdots, e_{n+r} \rangle$, which is the projection of $\langle e_{n-r+1}, \cdots, e_n, e_{n+1}+e_{2n+1}, \cdots, e_{n+r} \rangle$ to $F^{2r}$. Then it follows
\begin{align*}
    &H_{\gamma} \cap \Sp(\langle e_{n-2r+1}, \cdots, e_n, e_{n+1}, \cdots, e_{n+2r}\rangle) \\
    &= \Sp(\langle e_{n-2r+1}, \cdots, e_{n-r}, e_{n+r+1}, \cdots, e_{n+2r}) \times (H_\gamma \cap \Sp(\langle e_{n-r+1}, \cdots, e_n, e_{n+1}, \cdots, e_{n+r} \rangle)).
\end{align*}

This implies the subgroup $S_\gamma=\Sp(\langle e_{n-2r+1}, \cdots, e_{n-r}, e_{n+r+1}, \cdots, e_{n+2r}\rangle)$ is normal in $H_{\gamma} \cap M_{\gamma}$. Again, we have
\[
    S_\gamma \cdot U_\gamma \lhd H_{\gamma}=(H_{\gamma} \cap M_{\gamma}) \cdot N_{\gamma}.
\]
Write elements of $S_\gamma$ in the following form
\[
    h=\begin{pmatrix}
I_{n-2r} &  &  &  &  \\
 & A &  & B &  \\
 &  & I_{2r} &  &  \\
 & C &  & D &  \\
 &  &  &  & I_{n-2r}
\end{pmatrix}.
\]
One computes easily
\[
    \gamma h \gamma^{-1}=\begin{pmatrix}
I_n &  &  &  \\
 & A & B &  \\
 & C & D &  \\
 &  &  & I_{n-2r+1}
\end{pmatrix}.
\]

\section{Proof of the main result}
\label{sec:proof}

In this section, we prove our main results (theorem \ref{thm:main}) concerning $\Sp_{2n}$-periods of cusp forms and cuspidal Eisenstein series attached to maximal parabolic subgroups. For convenience, we only treat the number field case; the function field case follows from a similar (indeed easier) argument. Recall that $P = P_{n,n+1}$ is the maximal parabolic subgroup of type $(n,n+1)$ of $\GL_{2n+1}$.

\subsection{Intertwining period}

For $\varphi \in \cT([G]_{P})$ and $\lambda \in \fa^*_{P,\C}$, if convergent, we define the intertwining period of $\varphi$ as
\begin{equation*}
    J(\varphi,\lambda) = \int_{[H]_{P'}} \varphi(h) e^{\langle \lambda, H_{P}(h) \rangle} \rd h.
\end{equation*}
We put $J(\varphi) := J(\varphi,0)$.

\begin{lemma} \label{lem:intertwining_convergence}
    If $\varphi \in \cT_{\opn{cusp}}([G]_{P})$, then integral defining $J(\varphi,\lambda)$ is absolutely convergent for any $\lambda \in \fa_{P,\C}^*$.
\end{lemma}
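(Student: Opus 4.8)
The plan is to cover $[H]_{P'}$ (with $P'=P\cap H$ the Siegel parabolic of $H=\Sp_{2n}$, as in the $r=0$ type \RNum{1} computation \eqref{eq:main_orbit}) by a Siegel set adapted to $P'$, and to play the at-most-polynomial growth of $e^{\langle\lambda,H_P(\cdot)\rangle}$ against the rapid decay that cuspidality of $\varphi$ forces in the ``$M_P$-direction''. Recall that the Siegel Levi $M_{P'}\cong\GL_n$ sits inside $M_P=\GL_n\times\GL_{n+1}$ by the non-standard embedding $g\mapsto(g,\diag({}^tg^{-1},1))$. Using the Iwasawa decomposition $H(\bA)=N_{P'}(\bA)M_{P'}(\bA)K_H$ together with the facts that $\varphi$ is left $N_{P'}(\bA)$-invariant and that $H_P$ vanishes on $N_{P'}(\bA)$ and on $K_H$, the integral unfolds to
\[
    \int_{[H]_{P'}}\lvert\varphi(h)\rvert\, e^{\langle\Re\lambda,H_P(h)\rangle}\,dh \;\ll\; \int_{K_H}\int_{[M_{P'}]}\lvert\varphi(mk)\rvert\, e^{\langle\Re\lambda,H_P(m)\rangle}\,\delta_{P'}(m)^{-1}\,dm\,dk ,
\]
so by compactness of $K_H$ it is enough to bound the inner integral over $[M_{P'}]=\GL_n(F)\backslash\GL_n(\bA)$ uniformly in $k$. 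On this embedded $\GL_n$ both $\delta_{P'}$ and $e^{\langle\Re\lambda,H_P(\cdot)\rangle}$ are powers of $\lvert\det\rvert$ (for $H_P$ this is because under the embedding the two block-determinants are inverse to one another); writing $m=z m^1$ with $z\in A_{\GL_n}^\infty$ and $m^1\in\GL_n(\bA)^1$, these auxiliary factors are trivial on the $m^1$-part and bounded by a fixed power of $\max(\lVert z\rVert,\lVert z\rVert^{-1})$.

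The decay input is cuspidality. The parabolics $R$ of $G$ with $R\subsetneq P$ are exactly those of the form $R_M N_P$ with $R_M$ a proper standard parabolic of $M_P$, and for such $R$ the restriction of $\varphi_R$ to $M_P$ is a multiple of the constant term of $m\mapsto\varphi(mk)$ along $R_M$; hence the hypothesis $\varphi_R=0$ says precisely that $m\mapsto\varphi(mk)$ is a cusp form on $[M_P]$ for each $k$. Since it also has uniform moderate growth on $[M_P]$ (the norm on $[G]_P$ dominates the one on $[M_P]$ along $M_P(\bA)$), it lies in $\cT_{\opn{cusp}}([M_P])$, so the rapid decay of cusp forms \eqref{eq:cusp_form_rapid_decay}, applied to $M_P$, gives $\lvert\varphi(mk)\rvert\ll_N\lVert m\rVert_{[M_P]^1}^{-N}$ for every $N$, uniformly in $k\in K_H$. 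The reduction-theoretic point to verify is that the non-standard embedding $\GL_n\hookrightarrow M_P$ meets $A_{M_P}^\infty$ only in the identity and is proper on Siegel sets — concretely $\lVert m\rVert_{M_P}\gtrsim\lVert m\rVert_{\GL_n}$, and going to infinity in $[\GL_n]$, whether by pushing $m^1$ deep into the positive chamber of $\GL_n(\bA)^1$ or by letting $\lVert z\rVert\to0$ or $\infty$, forces $\lVert m\rVert_{[M_P]^1}\to\infty$. Consequently $\lvert\varphi(mk)\rvert$ decays faster than any power of $\max(\lVert z\rVert,\lVert z\rVert^{-1})$ and faster than any power of the norm on $\GL_n(\bA)^1$.

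Combining these, after covering $[\GL_n]$ by a Siegel set the $z$-integral converges because the rapid decay beats the fixed power of $\max(\lVert z\rVert,\lVert z\rVert^{-1})$, and the integral over the positive chamber of $\GL_n(\bA)^1$ converges because $\GL_n(F)\backslash\GL_n(\bA)^1$ has finite volume while the integrand there is simply a rapidly decreasing function; uniformity in $k$ over the compact $K_H$ then yields absolute convergence of $J(\varphi,\lambda)$ for all $\lambda\in\fa_{P,\C}^*$. I expect the main obstacle to be exactly this last properness statement for the non-standard embedding $M_{P'}\hookrightarrow M_P$ — i.e.\ checking that no direction of non-compactness of $[H]_{P'}$ escapes the cuspidal rapid decay — everything else being a routine Siegel-domain estimate; a secondary point worth stating carefully is that $\varphi|_{M_P}$ is genuinely a cusp form, and not merely of moderate growth.
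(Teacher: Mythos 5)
Your argument is correct and follows essentially the same route as the paper, which after reducing to $\lambda=0$ simply asserts that $\varphi|_{[H]_{P'}}\in\cS([H]_{P'})$, citing the rapid decay property of cusp forms. The content you supply — that cuspidality on $[G]_P$ makes $m\mapsto\varphi(mk)$ a cusp form on $[M_P]$, together with the properness of the non-standard embedding $M_{P'}=\GL_n\hookrightarrow M_P$ away from $A_{M_P}^\infty$ so that the central direction of $[\GL_n]$ escapes into $[M_P]^1$ — is precisely what underlies the paper's one-line assertion, and you are right to identify that comparison of Siegel domains as the nontrivial point.
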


\begin{proof}
    Replacing $\varphi(h)$ by $\varphi \cdot e^{\langle \lambda,H_{P}(\cdot) \rangle}$, we can assume $\lambda = 0$. Since $\varphi \in \cT_{\opn{cusp}}([G]_{P})$, $\varphi|_{[H]_{P'}} \in \cS([H]_{P'})$ (see \eqref{eq:cusp_form_rapid_decay}) the result follows.
\end{proof}

We can also relate the intertwining period to Whittaker functions. Denote the upper triangular unipotent subgroups of $\GL_i$ by $N_i$. Let $\psi_{n,n+1}$ be the character on $[N_n] \times [N_{n+1}]$ defined by
\begin{equation*}
    \psi_{n,n+1}(u) = \psi(u_{1,2}+\cdots+u_{n-1,n}+u_{n+1,n+2}+\cdots+u_{2n,2n+1}).
\end{equation*}
For $\varphi \in \cT([G]_{P})$, we define 
\begin{equation*}
    W_\varphi(g) = \int_{[N_n] \times [N_{n+1}]} \varphi(ug) \psi_{n,n+1}(u) \rd u. 
\end{equation*}

By the usual expansion into Whittaker functions, recall that $N_{2n+1}' := N_{2n+1} \cap H$, we can write
\begin{equation} \label{eq:J_and_Whittaker}
    \begin{split}
    J(\varphi,\lambda) &= \int_{P^\prime(\bA) \backslash H(\bA)} \int_{N_n(\bA) \backslash \GL_n(\bA)} W_{R(h)\varphi} \begin{pmatrix}
        J{}^tm^{-1}J &  & \\ & m &  \\ & & 1
    \end{pmatrix} \lvert \det m \rvert^{n+1+s_\lambda} e^{\langle \lambda, H_{P}(h) \rangle} \rd m \rd h  \\
    &= \int_{N_{2n+1}'(\bA) \backslash H(\bA)} W_\varphi(h) e^{\langle \lambda, H_P(h) \rangle} \rd h,
    \end{split}
\end{equation}
when $\mathrm{Re}(s_\lambda) \gg 0$, where $s_\lambda \in \C$ is determined by
\[
    \exp \left(\langle \lambda, H_{P}\begin{pmatrix}
        J{}^tm^{-1}J &  & \\ & m &  \\ & & 1
    \end{pmatrix}\rangle \right)= \lvert \det m \rvert ^{s_\lambda}.
\]

\subsection{Period of pseudo-Eisenstein series}

\begin{proposition} \label{prop:pseudo}
    Let $\varphi \in \cS([G]_{P})$ and let $E(\cdot,\varphi)$ be the pseudo-Eisenstein series. Then
    \begin{equation*}
        \int_{[H]} E(h,\varphi) \rd h = J(\varphi)
    \end{equation*}
\end{proposition}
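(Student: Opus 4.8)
The plan is to evaluate $\int_{[H]}E(h,\varphi)\,\rd h$ by unfolding the pseudo-Eisenstein series along the orbits of $H(F)$ on $P(F)\backslash G(F)$, classified together with their stabilizers in \S\ref{sec:orbits}, and to show that every orbit other than the \emph{main orbit} \eqref{eq:main_orbit} contributes zero. Concretely: since $\varphi\in\cS([G]_P)$, the pseudo-Eisenstein series $E(\cdot,\varphi)=E_P^G\varphi$ lies in $\cS([G])$, and $[H]=[\Sp_{2n}]$ has finite volume, so $\int_{[H]}E(h,\varphi)\,\rd h$ is absolutely convergent. Identifying $P(F)\backslash G(F)$ with $\Gr_n$ and splitting it into the $H(F)$-orbits of \S\ref{sec:orbits}, each with representative $\gamma$ and stabilizer $H_\gamma=\gamma^{-1}P(F)\gamma\cap H(F)$, the integral becomes $\sum_\gamma\int_{H_\gamma(F)\backslash H(\bA)}\varphi(\gamma h)\,\rd h$. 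To legitimize the interchange of the sum over $\Gr_n(F)$ with the integral I would majorize the sum of absolute values by a convergent series, using the Schwartz seminorms of $\varphi$ on the explicit representatives of \S\ref{ssec:list_of_orbit} (a majorization of the type used in \S\ref{sec:convergence}), or else group the orbits into finitely many pieces each controlled by a genuine rapid-decay estimate.

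For the main orbit one has $\gamma=1$ and $H_\gamma=P'=P\cap H$, the Siegel parabolic of $H$. Since $N_{P'}\subset N_P$ and $\varphi$ is left $N_P(\bA)$-invariant, the integration over $N_{P'}(F)\backslash N_{P'}(\bA)$ (Tamagawa volume $1$) is vacuous, and this term equals $\int_{[H]_{P'}}\varphi(h)\,\rd h=J(\varphi)$; note this integral is absolutely convergent because the main orbit is the \emph{closed} $H$-orbit in $[G]_P$, so the restriction of a Schwartz function to it remains Schwartz.

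The content is therefore the vanishing of the remaining orbital integrals. For every non-main $\gamma$, \S\ref{ssec:list_of_orbit} presents $H_\gamma=(H_\gamma\cap M_\gamma)\cdot N_\gamma$ as a subgroup of a \emph{proper} parabolic $Q_\gamma=M_\gamma N_\gamma$ of $H$, together with a nontrivial unipotent $U_\gamma\subset N_\gamma$ whose conjugate $\gamma U_\gamma\gamma^{-1}$ lies in $N_P$. Using left $N_P(\bA)$-invariance of $\varphi$ to trivialize the integration over $U_\gamma(F)\backslash U_\gamma(\bA)$, and then expanding the integrand in Fourier series along the complementary directions of $N_\gamma/U_\gamma$ (whose $\gamma$-conjugates lie in the Levi $M_P=\GL_n\times\GL_{n+1}$), the orbital integral should collapse to a Klingen--mirabolic type period of constant terms of $\varphi$, which vanishes by the mechanism of Lemma~\ref{lem:Klingen-mirabolic} and Corollary~\ref{mixed period}. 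An alternative and perhaps cleaner route is to first reduce, by the spectral decomposition of $\cS([G]_P)$ and continuity, to the two cases: (i) $\varphi$ has cuspidal data $\Pi_n\boxtimes\Pi_{n+1}$ on $M_P$, and (ii) $E(\cdot,\varphi)$ is a pseudo-Eisenstein series attached to a parabolic strictly smaller than $P_{n,n+1}$; case (ii) is covered by (an iterate of) Proposition~\ref{prop:main_vanishing}, while in case (i) the cuspidality of $\Pi_n$ and $\Pi_{n+1}$ makes the reduction to Lemma~\ref{lem:Klingen-mirabolic} direct.

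The main obstacle is exactly this last step: one must track, for each orbit type of \S\ref{sec:orbits}, precisely which unipotent and symplectic pieces of the stabilizer $\gamma$ carries into $N_P$ and which into $M_P$, and then organize the resulting inner integral so that either a nontrivial additive character surfaces or one lands on a period already known to vanish. The convergence bookkeeping of the unfolding is a secondary but genuine point, since the individual non-main orbital integrals are not as manifestly well-behaved as the one over the closed orbit.
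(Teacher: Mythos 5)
Your primary route — unfolding $E(\cdot,\varphi)$ along $H(F)$-orbits on $P(F)\backslash G(F)$, identifying the main-orbit contribution with $J(\varphi)$, and killing the remaining orbital integrals using the explicit stabilizer data of \S\ref{ssec:list_of_orbit} together with the period lemmas of \S\ref{sec:cuspidal} — is exactly the paper's argument, down to the convergence justification (the paper notes $|\varphi|\in\cS^{00}([G]_P)$ to control the sum over $\Gr_n(F)$).

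Two small inaccuracies are worth flagging. First, $\gamma U_\gamma\gamma^{-1}$ does \emph{not} sit inside $N_P$ in general: the explicit matrices in \S\ref{ssec:list_of_orbit} show it splits into an $N_P$-piece (absorbed by $N_P(\bA)$-invariance of $\varphi$) and a nontrivial unipotent piece inside the Levi $M_P$; what actually vanishes is the inner integral over the full normal subgroup $[S_\gamma U_\gamma]$, which the paper recognizes directly as an instance of Corollary~\ref{mixed period} (itself already encoding the Fourier-expansion mechanism of Lemma~\ref{lem:Klingen-mirabolic}), so no fresh Fourier expansion is re-run at this stage. Second, your ``cleaner alternative'' does not close as stated: cuspidal data on a Levi strictly smaller than $M_P$ is not covered by Proposition~\ref{prop:main_vanishing}, which treats only \emph{maximal} parabolics not associate to $P_{n,n+1}$, and one would additionally need $J(\varphi)=0$ for such data, which is not established — so your direct route is the one to keep. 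Finally, note that the non-main orbits vanish only upon invoking cuspidality of $\varphi$ as a function of $M_P$; this matches how the proposition is actually applied (to $\varphi'\in\cS_{\opn{cusp}}([G]_P)$) in the proof of Theorem~\ref{thm:main}, and both you and the paper use this hypothesis even though the statement writes only $\varphi\in\cS([G]_P)$.
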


\begin{proof}
    Since $\varphi \in \cS([G]_{P})$, we have $\lvert \varphi \rvert \in \cS^{00}([G]_{P})$. Hence the expression
    \begin{equation*}
        \int_{[H]} \sum_{\gamma \in P(F) \backslash G(F)} \lvert \varphi(\gamma h) \rvert \rd h
    \end{equation*}
    is finite. Let $[P(F) \backslash G(F) / H(F)]$ denote an arbitrary representative of the double coset $P(F) \backslash G(F) / H(F)$. Therefore
    \begin{equation*}
        \begin{split}
              \int_{[H]} E(h,\varphi) \rd h &= \sum_{\gamma \in [P(F) \backslash G(F)/H(F)]} \int_{H_{\gamma}(F) \backslash H(\bA)} \varphi(\gamma h) \rd h \\ 
             =& \sum_{\gamma \in [P(F) \backslash G(F)/H(F)]} \int_{H_\gamma(\bA) \backslash H(\bA)} \int_{[H_\gamma]} \varphi(\gamma h \gamma^{-1} \gamma g) \rd h \rd g
        \end{split}
    \end{equation*}
    Note that $\gamma h \gamma^{-1} \in P$. In \S ~\ref{ssec:list_of_orbit}, we give a complete list of one choice of $[P(F)\backslash G(F)/H(F)]$. When the orbit is not the main orbit (see ~\eqref{eq:main_orbit}), we list a normal subgroup $S_\gamma \cdot U_\gamma$ and compute the explicit form of $\gamma h \gamma^{-1}$ for $h$ in $S_\gamma$ or $U_\gamma$ respectively. Using cuspidality or corollary ~\ref{mixed period}, the integral on this normal subgroup already vanishes. Therefore, only the main orbit contributes, which gives
    \begin{equation*}
        \int_{P'(F) \backslash H(\bA)} \varphi(h) \rd h = J(\varphi).
    \end{equation*}
\end{proof}

\subsection{Period of cuspidal Eisenstein series}
\label{ssec:period_Eisenstein_series}

In this subsection, we prove the main theorem.

\begin{theorem} \label{thm:main}
    Let $Q$ be a maximal parabolic subgroup of $G$ and let $\varphi \in \cA_{Q,\opn{cusp}}(G)$. Then
    \begin{enumerate}
        \item If $Q$ is not associate to $P$, then $\cP_H(E(\cdot,\varphi,\lambda)) = 0$ for any $\lambda$ such that $E(\cdot,\varphi,\lambda)$ is regular.
        \item If $Q=P$, then for any $\lambda$ such that $E(\cdot,\varphi,\lambda)$ is regular,
        \begin{equation*}
            \cP_H(E(\cdot,\varphi,\lambda)) = J(\varphi,\lambda).
        \end{equation*}
    \end{enumerate}
\end{theorem}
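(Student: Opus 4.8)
The plan is to deduce \cref{thm:main} from the pseudo‑Eisenstein versions already established (\cref{prop:pseudo} for $Q=P$ and \cref{prop:main_vanishing} for $Q$ not associate to $P$) by realizing the genuine Eisenstein series as a contour integral of a pseudo‑Eisenstein series. First note that both sides of the asserted identities are holomorphic on the regular locus: $\lambda\mapsto\cP_H(E(\cdot,\varphi,\lambda))$ because $E(\cdot,\varphi,\lambda)|_{[H]}\in\cS([H])$ depends holomorphically on $\lambda$ wherever $E(\cdot,\varphi,\lambda)$ is holomorphic (\cref{cor:Eisenstein_series_rapid_decrease}), and $\lambda\mapsto J(\varphi,\lambda)$ by \cref{lem:intertwining_convergence}. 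For cuspidal data on a maximal parabolic of $\GL_{2n+1}$ the singular set of $E(\cdot,\varphi,\lambda)$ is a finite union of affine hyperplanes in $\fa_{Q,\C}^*$, so the regular locus is connected. Hence it suffices to prove the identity on the vertical affine subspace $\opn{Re}\lambda=c$ for one $c$ in the range of absolute convergence of the Eisenstein sum, and then invoke analytic continuation.

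Fix $\varphi\in\cA_{Q,\opn{cusp}}(G)$ and $\alpha\in\opn{PW}(i\fa_Q^*)$, and set $\phi:=\varphi\cdot(\widehat\alpha\circ H_Q)$, where $\widehat\alpha\in C_c^\infty(\fa_Q)$ is the Fourier transform of $\alpha$; using the rapid decay of cusp forms \eqref{eq:cusp_form_rapid_decay} and the compact support of $\widehat\alpha$ one checks that $\phi\in\cS_{\opn{cusp}}([G]_Q)$. Writing $\phi=\int_{i\fa_Q^*}\alpha(\lambda)\varphi_\lambda\,\rd\lambda$, shifting the contour to $\opn{Re}\lambda=c$ (no poles are crossed since $\alpha$ is entire) and interchanging the $\lambda$‑integral with the sum over $P(F)\backslash G(F)$ — legitimate because $c$ lies in the range of absolute convergence — gives, for the pseudo‑Eisenstein series attached to $\phi$,
\[
    E(\cdot,\phi)=\int_{\opn{Re}\lambda=c}\alpha(\lambda)\,E(\cdot,\varphi,\lambda)\,\rd\lambda
\]
as functions on $[G]$, hence on $[H]$. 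Applying $\cP_H=\int_{[H]}$ and interchanging it with the contour integral (the crux, discussed below), and using \cref{prop:pseudo} (if $Q=P$, where also $J(\phi)=\int_{\opn{Re}\lambda=c}\alpha(\lambda)J(\varphi,\lambda)\,\rd\lambda$ by the absolute convergence in \cref{lem:intertwining_convergence}) or \cref{prop:main_vanishing} (if $Q$ is not associate to $P$), we obtain
\[
    \int_{\opn{Re}\lambda=c}\alpha(\lambda)\Big(\cP_H\big(E(\cdot,\varphi,\lambda)\big)-J(\varphi,\lambda)\Big)\,\rd\lambda=0
\]
in the first case, and the same with $J(\varphi,\lambda)$ deleted in the second, for every Paley–Wiener function $\alpha$. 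Since $\lambda\mapsto\cP_H(E(\cdot,\varphi,\lambda))-J(\varphi,\lambda)$ is continuous (indeed holomorphic) on $\opn{Re}\lambda=c$ and Paley–Wiener functions separate points of that affine subspace, it vanishes identically there, hence on the whole (connected) regular locus. The remaining case $Q$ associate to but distinct from $P$, namely $Q=P_{n+1,n}$, reduces to $Q=P$ by the standard intertwining operator. This proves both parts of the theorem.

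\textbf{The main obstacle} is justifying the interchange of $\int_{[H]}$ with the contour integral. Since $c$ lies in the range of absolute convergence, $E(h,\varphi,\lambda)=\sum_\gamma\varphi_\lambda(\gamma h)$ there, and each right‑derivative of the twist $e^{\langle\lambda,H_Q(\cdot)\rangle}$ introduces a factor linear in $\lambda$; consequently every fixed seminorm of $E(\cdot,\varphi,\lambda)$ in $\cT([G])$ grows at most polynomially in $|\opn{Im}\lambda|$, uniformly for $\opn{Re}\lambda=c$. On the other hand, exactly as in the proof of \cref{cor:Eisenstein_series_rapid_decrease}, the $H$‑relevant constant terms of $E(\cdot,\varphi,\lambda)$ vanish by \eqref{eq:constant_term}, so $\Lambda^T_H E(\cdot,\varphi,\lambda)=E(\cdot,\varphi,\lambda)$, and \cref{thm:Zydor_truncation_property} bounds the relevant $\cS([H])$‑seminorm of $E(\cdot,\varphi,\lambda)|_{[H]}$ by a continuous $\cT([G])$‑seminorm of $E(\cdot,\varphi,\lambda)$. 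Combining these, $\|E(\cdot,\varphi,\lambda)\|_{L^1([H])}$ grows at most polynomially in $|\opn{Im}\lambda|$ on $\opn{Re}\lambda=c$, which against the rapidly decreasing $|\alpha(\lambda)|$ makes the double integral absolutely convergent, so Fubini applies; the corresponding interchange for $J(\phi)$ is immediate from the absolute convergence in \cref{lem:intertwining_convergence}. (Alternatively one could unfold $\cP_H(E(\cdot,\varphi,\lambda))$ directly for $\opn{Re}\lambda\gg0$ and rerun the $\Sp_{2n}$‑orbit analysis of \cref{sec:orbits} as in the proof of \cref{prop:pseudo}; the difficulty then shifts to justifying the termwise integration, which the route above sidesteps by quoting the pseudo‑Eisenstein statements.)
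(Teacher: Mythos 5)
Your proposal is correct and follows essentially the same route as the paper: smear $\varphi$ against a Paley--Wiener function $\alpha$ to obtain $\phi\in\cS_{\opn{cusp}}([G]_Q)$ (the paper's $\varphi'$), identify $E(\cdot,\phi)$ with the contour integral $\int_{\opn{Re}\lambda=c}\alpha(\lambda)E(\cdot,\varphi,\lambda)\,\rd\lambda$, use the Zydor truncation estimate (Theorem~\ref{thm:Zydor_truncation_property}) together with Lemma~\ref{lem:intertwining_convergence} to justify Fubini, invoke Propositions~\ref{prop:pseudo} and~\ref{prop:main_vanishing} for the pseudo-Eisenstein series, and conclude by arbitrariness of $\alpha$ and meromorphic continuation. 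The only notable point of difference is that where the paper asserts that $\{E(\cdot,\varphi,\lambda)\}_{\lambda\in\lambda_0+i\fa_P^*}$ is a bounded subset of $\cT([G])$, you instead observe that each fixed $\cT([G])$-seminorm grows at most polynomially in $\lvert\opn{Im}\lambda\rvert$; your version is the more careful formulation (differentiation introduces powers of $\lambda$, so boundedness uniform over all seminorms is not what one actually has), but both suffice against the rapid decay of the Paley--Wiener weight, so the conclusion is the same. Your final remark about $Q=P_{n+1,n}$ reducing to $Q=P$ by an intertwining operator is harmless but unnecessary: the theorem as stated only treats $Q$ not associate to $P$ in (1) and $Q=P$ exactly in (2).
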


\begin{proof}
    By meromorphicity, we only need to check both (1) and (2) within the convergence domain of the Eisenstein series.

    For $\varphi \in \cA_{P,\opn{cusp}}(G)$ and $\beta \in \opn{PW}(i\fa_{P}^*)$ (see ~\eqref{eq:Paley-Wiener}). Let
    \begin{equation*}
        \varphi'(g) = \int_{i\fa_P^*} e^{\langle \lambda, H_P(g) \rangle} \beta(\lambda) \varphi(g) \rd \lambda.
    \end{equation*}
    Then $\varphi' \in \cS_{\opn{cusp}}([G]_{P})$. Moreover, for $\lambda_0 \in \fa_{P}^*$ sufficiently positive, we have
    \begin{equation*}
        E(\varphi',g) = \int_{\lambda_0 + i \fa_{P}^* } E(g,\varphi,\lambda) \beta(\lambda) \rd \lambda.
    \end{equation*}

    Note that $E(h,\varphi, \lambda)$ forms a bounded set in $\cT([G])$ as $\lambda$ varies in $\lambda_0 + i \fa_{P}^*$. By theorem ~\ref{thm:Zydor_truncation_property} and lemma ~\ref{lem:intertwining_convergence}, both the integrals
    \[
        \int_{[H]} \int_{\lambda_0 + i\fa_{P}^*} \lvert E(h,\varphi,\lambda) \beta(\lambda) \rvert \rd \lambda \rd h.
    \]
    and
    \begin{equation*}
        \int_{\lambda_0 + i\fa_{P}^*} \int_{[H]_{P'}} \lvert \varphi(h) \beta(\lambda) \rvert e^{\langle \lambda_0 , H_P(g) \rangle} \rd h \rd \lambda
    \end{equation*}
    are convergent. By proposition ~\ref{prop:pseudo}, we then see that
    \begin{equation*}
        \int_{\lambda_0 + i\fa_P^*} \int_{[H]} E(h,\varphi,\lambda) \beta(\lambda) \rd \lambda = J(\varphi',\lambda) = \int_{\lambda_0 + i\fa_P^*} J(\varphi,\lambda) \beta(\lambda) \rd \lambda.
    \end{equation*}
    Since $\beta$ is arbitrary, (2) holds. The proof of (1) is similar (and indeed easier).
\end{proof}

\subsection{Proof of Theorem \ref{thm:intro_main}} 
\label{ssec:proof_of_intro_main}

By lemma ~\ref{eq:J_and_Whittaker}, let $\Pi= \Pi_n \otimes \Pi_{n+1}$ be a cuspidal automorphic representation of $\GL_n(\bA) \times \GL_{n+1}(\bA)$ with central character trivial on $A_{\GL_n}^\infty \times A_{\GL_{n+1}}^\infty$, if $\varphi \in \cA_{P,\Pi}$, by theorem \ref{thm:main} and \eqref{eq:J_and_Whittaker}, we have
\begin{equation} \label{eq:proof_main_1}
    \cP_H(E(\cdot,\varphi, \lambda)) = \int_{N_{2n+1}'(\bA) \backslash H(\bA)} W_{\varphi}(h) e^{\langle \lambda, H_{P}(h) \rangle}  \rd h,
\end{equation}
 for $\lambda \in \fa_{P,\C}^{*}$ such that $\mathrm{Re}(s_\lambda) \gg 0$.

Fix a decomposition $\Pi=\otimes^\prime_v \Pi_v$, $\psi=\otimes_v \psi_v$ and assume $\varphi=\otimes'_v \varphi_v$ is a pure tensor. Here each $\varphi_v$ lies in the local parabolic induction $\Ind_{P(F_v)}^{G(F_v)}\Pi_v$. Note the additive character $\psi_{n, n+1}$ we defined before can be decomposed as $\prod_v \psi_{n, n+1, v}$, where each $\psi_{n, n+1, v}$ is the additive character on $N_n(F_v) \times N_{n+1}(F_v)$.  Let $\cW(\Pi_v, \psi_v)$ denote the Whittaker model of the generic representation $\varphi_v$ with respect to $\psi_{n, n+1, v}$. We regard the parabolic induction $\Ind_{P(F_v)}^{G(F_v)}\cW(\Pi_v, \psi_v)$ as the spaces of functions $f: G(F_v) \to \mathbb{C}$ such that for any $g \in G(F_v)$,
\[  
    \delta_{P}^{-\frac{1}{2}}(m)f(mg), \quad m \in M(F_v)=\GL_n(F_v) \times \GL_{n+1}(F_v)
\]
lies in $\cW(\Pi_v, \psi_v)$ as a function on $\GL_n(F_v) \times \GL_{n+1}(F_v)$. Due to the uniqueness of the Whittaker model, we have
\begin{equation} \label{eq:proof_main_2}
    W_\varphi(g)= \prod_v W_{\varphi_v}(g_v), \quad g=(g_v)
\end{equation}
where $W_{\varphi_v}$ denotes the vector in $\Ind_{P(F_v)}^{G(F_v)}\cW(\Pi_v, \psi_v)$ associated to $\varphi_v$. For any local place $v$ and $\varphi_v \in \Ind_{P(F_v)}^{G(F_v)}\Pi_v$, we define the local period integral
\[
    \cP_{H,v}(\varphi_v, \lambda)= \int_{N_{2n+1}^\prime(F_v) \backslash H(F_v)}W_{\varphi_v}(h_v)e^{\langle \lambda, H_{P}(h_v) \rangle}  \rd h_v.
\]
By \eqref{eq:proof_main_1} and \eqref{eq:proof_main_2}, we have
\[
    \cP_H(E(\cdot, \varphi, \lambda))=(\Delta_{H}^*)^{-1} \prod_v \cP_{H,v}(\varphi_v, \lambda).
\]
Unfold the local period integral, and we can see
\begin{equation*}
    \begin{split}
        \cP_{H,v}(\varphi_v, \lambda)=\int_{P^\prime(F_v) \backslash H(F_v)} \int_{N_n(F_v) \backslash \GL_n(F_v)}&W_{R(h_v)\varphi_v}\begin{pmatrix}
        J{}^tm_v^{-1}J &  &  \\ &  m_v & \\ & & 1
    \end{pmatrix} \\ &\vert \det m_v \rvert ^{n+1+s_\lambda} e^{\langle \lambda, H_{P}(h_v) \rangle} \rd m_v \rd h_v.
    \end{split}
\end{equation*}
This integral converges absolutely for $\lambda$ such that $\mathrm{Re}(s_\lambda) \gg 0$, and extends meromorphically to all $\lambda \in \fa_{P,\C}^{*}$.

Let $W(\varphi_v)$ denote the vector in $\cW(\Pi_v, \psi_v)$ defined by
\[
    W(\varphi_v)(m)=\delta_{P}^{-\frac{1}{2}}(m)W_{\varphi_v}(m), \quad m \in \GL_n(F_v) \times \GL_{n+1}(F_v).
\]
Then we have
\[
    W_{\varphi_v}\begin{pmatrix}
        J{}^tm_v^{-1}J &  &  \\ &  m_v & \\ & & 1
    \end{pmatrix}= W(\varphi_v)(J{}^tm_v^{-1}J, \begin{pmatrix}
m_v &  \\
 & 1
\end{pmatrix})|\det m_v|^{-\frac{2n+1}{2}}
\]
and we can write
\begin{equation*}
    \begin{split}
        \cP_{H,v}(\varphi_v, \lambda) =\int_{P^\prime(F_v) \backslash H(F_v)} \int_{N_n(F_v) \backslash \GL_n(F_v)} & W(R(h_v)\varphi_v) \left(J{}^tm_v^{-1}J, \begin{pmatrix}
m_v &  \\
 & 1
\end{pmatrix} \right) \\ & \lvert \det m_v \rvert^{\frac{1}{2}+s_\lambda} e^{\langle \lambda, H_{P}(h_v) \rangle} \rd m_v \rd h_v.
    \end{split}
\end{equation*}
 Using Iwasawa decomposition, we have
\[
    \cP_{H,v}(\varphi_v, \lambda) = \int_{K_{H,v}}\int_{N_n(F_v) \backslash \GL_n(F_v)}W(R(k_v)\varphi_v) \left(J{}^tm_v^{-1}J \begin{pmatrix}
m_v &  \\
 & 1
\end{pmatrix} \right)\lvert \det m_v \rvert^{\frac{1}{2}+s_\lambda}\rd m_v \rd k_v
\]
for a suitable Haar measure on $K_{H,v}$. Moreover, when $\psi_v$ is unramified, $\opn{vol}(K_{H,v})=1$.
Assume $\Pi_v$ is unramified and let $\varphi_v^\circ$ be the unique spherical vector in $\Ind_{P(F_v)}^{G(F_v)}\Pi_v$ such that $W_{\varphi_v^\circ}(1)=1$. Then, by the unramified computation of the local Rankin-Selberg integral, we have
\[
    \cP_{H,v}(\varphi_v^\circ, \lambda)=L(s_\lambda+ 1, \Pi^\vee_{n,v} \times \Pi_{n+1, v}).
\]
If we define the local normalized period integral by
\[
    \cP_{H,v}^\natural(\varphi_v, \lambda)=\frac{\cP_{H,v}(\varphi_v, \lambda)}{L(s_\lambda+1, \Pi^\vee_{n,v} \times \Pi_{n+1, v})},
\]
then for any $\lambda \in \fa_{P,\C}^*$, we have
\[
    \cP_H(E(\cdot, \varphi, \lambda))=(\Delta_{H}^*)^{-1} L(s_\lambda+1, \Pi_n^\vee \times \Pi_{n+1})\prod_{v}\cP_{H,v}^\natural(\varphi_v, \lambda).
\]
In particular, setting $\lambda=0$ gives
\begin{equation} \label{eq:proof_main_3}
    \cP_H(E(\cdot, \varphi))=(\Delta_{H}^*)^{-1}L(1, \Pi_n^\vee \times \Pi_{n+1})\prod_{v}\cP_{H,v}^\natural(\varphi_v).
\end{equation}
Here we put $\cP_{H,v}^\natural(\varphi_v) :=\cP_{H,v}^\natural(\varphi_v, 0)$. Since $\Pi_n$ and $\Pi_{n+1}$ are both unitary, $\overline{L(1, \Pi_n^\vee \times \Pi_{n+1})}=L(1, \Pi_n \times \Pi_{n+1}^\vee)$. Taking conjugate of \eqref{eq:proof_main_3}, we have
\[
    \overline{\cP_H(E(\cdot, \varphi))}=(\Delta_{H}^*)^{-1}L(1, \Pi_n \times \Pi_{n+1}^\vee)\prod_{v} \overline{\cP_{H,v}^\natural(\varphi_v)},
\]
therefore
\begin{equation}\label{eq:sqperiod}
    |\cP_H(E(\cdot, \varphi))|^2=(\Delta_{H}^*)^{-2}L(1, \Pi_n^\vee \times \Pi_{n+1})L(1, \Pi_n \times \Pi_{n+1}^\vee)\prod_{v }|\cP_{H,v}^\natural(\varphi_v)|^2.
\end{equation}
We define the Petterson norm of $\varphi$ by
\[
    \langle \varphi, \varphi \rangle_{\mathrm{Pet}}= \int_{[G]_{P,0}} \lvert \varphi(x) \rvert^2 \rd x = \int_{P(\bA) \backslash G(\bA)} \int_{[M]_0}|\varphi(mg)|\delta_{P}(m)^{-1} \rd m \rd g.
\]
Let $\cP_n$ and $\cP_{n+1}$ denote the mirabolic subgroups of $\GL_n$ and $\GL_{n+1}$ respectively. We define the following local inner product
\[
    \langle \varphi_v, \varphi_v \rangle= \int_{P(F_v) \backslash G(F_v)} \int_{N_n(F_v)\times N_{n+1}(F_v) \backslash \cP_n(F_v) \times \cP_{n+1}(F_v)}|W(R(g_v)\varphi_v)(m_{1,v}, m_{2,v})|^2 \rd^* m_{1,v} \rd^* m_{2,v} \rd g_v.
\]
And we consider the normalization
\[
    \langle \varphi_v , \varphi_v \rangle^\natural=\frac{\Delta_{\GL_n,v} \Delta_{\GL_{n+1,v}} \langle \varphi_v, \varphi_v \rangle}{L(1, \Pi_{n,v}, \Ad)L(1, \Pi_{n+1, v}, \Ad)}.
\]
Then according to \cite[\S 4]{JacquetShalika}, we have
\begin{equation}\label{eq:norm}
    \langle \varphi, \varphi \rangle_{\mathrm{Pet}}=(\Delta_{G}^*)^{-1} L^*(1, \Pi_n, \Ad) L^*(1, \Pi_{n+1}, \Ad)\prod_{v}  \langle \varphi_v , \varphi_v \rangle^\natural.
\end{equation}
Note that
\[
    L^*(1, \Pi_n \boxplus \Pi_{n+1}, \Ad)=L^*(1, \Pi_n, \Ad) L^*(1, \Pi_{n+1}, \Ad)L(1, \Pi_n^\vee \times \Pi_{n+1})L(1, \Pi_n \times \Pi_{n+1}^\vee),
\]
and
\[
    L^*(1, \Pi, \Ad)=L^*(1, \Pi_n, \Ad) L^*(1, \Pi_{n+1}, \Ad).
\]
We devide \eqref{eq:sqperiod} by \eqref{eq:norm}, then we can write
\[
    \frac{|\cP_H(E(\cdot, \varphi))|^2}{\langle \varphi, \varphi \rangle_{\mathrm{Pet}}}=\frac{\Delta_{G}^*}{(\Delta_{H}^*)^2}\frac{L^*(1, \Pi_n \boxplus \Pi_{n+1}, \Ad)}{L^*(1, \Pi, \Ad)^2}\prod_{v}\frac{|\cP_{H,v}^\natural(\varphi_v)|^2}{\langle \varphi_v , \varphi_v \rangle^\natural}.
\]
Note the BZSV dual of $(\GL_{2n+1}, \Sp_{2n}, 0, 1)$ should be $(\GL_{2n+1}, \GL_n \times \GL_{n+1}, 0, 1)$. This implies that $\hat{\rho}_0$ is the adjoint representation of $\GL_n \times \GL_{n+1}$ on $\gl_{2n+1}$. So we can write $L^*(1, \Pi, \hat{\rho}_0)$ for $L^*(1, \Pi_n \boxplus \Pi_{n+1}, \Ad)$, and then
\[
    \frac{|\cP_H(E(\cdot, \varphi))|^2}{\langle \varphi, \varphi \rangle_{\mathrm{Pet}}}=\frac{\Delta_{G}^*}{(\Delta_{H}^*)^2}\frac{L^*(1, \Pi, \hat{\rho}_0)}{L^*(1, \Pi, \Ad)^2}\prod_{v}\frac{|\cP_{H,v}^\natural(\varphi_v)|^2}{\langle \varphi_v , \varphi_v \rangle^\natural}.
\]

\appendix

\section{Geometric conjecture (by Zeyu Wang)}
\label{sec:geometric}

In the appendix, we introduce the geometric analogues of various symplectic periods considered in this paper and formulate their cuspidal vanishing results.

\subsection{Conventions}

We work with a connected smooth projective curve $C$ over base field $K=\overline{\mathbb{F}}_{q}$. We use to denote the constructible etale sheaf theory with coefficient $k=\overline{\mathbb{Q}}_l$ for any prime number $l\neq \mathrm{char}(K)$ as developed in \cite{liu2017enhancedadicformalismperverse}. For each (higher) Artin stack $X$, we consider $\Shv(X)$ which is the category of ind-constructible \'etale $\overline{\mathbb{Q}}_l$-complexes over $X$. We have $\Shv(\mathrm{pt})=\Vect$ which is the category of complexes of vector spaces over $k$. For the purpose of this article, one can either regard the categories as triangulated categories or $\infty$-categories. 

The theory $\Shv$ enjoys a full six-functor formalism which we briefly recall as follows. For any map $f:X\to Y$ between Artin stacks, one has associated adjoint pair of functors $(f^*,f_*)$ where $f^*:\Shv(Y)\to \Shv(X)$ and $f_*:\Shv(X)\to \Shv(Y)$. When the map $f$ is locally of finite type, one has adjoint pair $(f_!,f^!)$ where $f_!:\Shv(X)\to\Shv(Y)$ and $f^!:\Shv(Y)\to \Shv(X)$. One also has an adjoint pair $(\otimes, \underline{\mathrm{Hom}})$. These functors enjoy many compatibilities and we refer to \cite{liu2017enhancedadicformalismperverse} for a complete list. We set $\Gamma_c=f_!$ and $\Gamma=f_*$ for $f:X\to \mathrm{pt}$ where $X$ is any Artin stack.

We use $\AS\in\Shv(\bA^1)$ to denote the Artin-Schreier sheaf (which depends on a choice of additive character $\psi:\mathbb{F}_q^{\times} \to k$ and we always fix such a choice). Denote $\AS_{-}:=[-1]^*\AS$ where $[-1]:\A^1\to \A^1$ is the map $[-1](x)=-x$.

For any algebraic group $G$ over $C$, we use $\Bun_G$ to denote the moduli stack of $G$-torsors over $C$.
We abbreviate $\Bun_{n}=\Bun_{\GL_n}$. For any subgroup $P\sub \GL_{2n}$, we denote $P':=P\cap \Sp_{2n}$. 

\subsection{Geometric symplectic periods}

Consider $\pi_{2n+1}:\Bun_{\Sp_{2n}}\to \Bun_{2n+1}$ induced by the natural inclusion $\Sp_{2n}\sub \GL_{2n+1}$, we define \[\int_{\Sp_{2n}}:\Shv(\Bun_{2n+1})\to\Vect\] via \[\int_{\Sp_{2n}}\cF:=\Gamma_c(\pi_{2n+1}^*\cF)\in\Vect, \cF\in\Shv(\Bun_{2n+1}).\]

\begin{theorem}\label{thm:geocuspvan}
    For any $\cF\in\Shv(\Bun_{2n+1})_{\cusp}$, we have $\int_{\Sp_{2n}}\cF=0$.
\end{theorem}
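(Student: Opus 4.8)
The strategy is to geometrize the proof of Corollary~\ref{cor:symplectic_period} and Proposition~\ref{prop:main_vanishing}, replacing the Fourier-expansion-into-Whittaker-functions argument by the corresponding manipulations of $!$-pushforwards along the geometric Whittaker/mirabolic fibrations over $\Bun_G$. First I would recall that cuspidality of $\cF\in\Shv(\Bun_{2n+1})_{\cusp}$ means that all its constant terms along proper parabolics vanish; in particular, along the mirabolic direction one gets a \emph{geometric Fourier expansion} identifying $\pi_{2n+1}^*\cF$, up to the factorization through $\Bun_{\Sp_{2n}}$, with an induction from the geometric analogue $\cP_{2n+1}$ of the mirabolic subgroup twisted by the Artin--Schreier sheaf $\AS$ attached to a nondegenerate character. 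Concretely, I would set up the diagram $\Bun_{\Sp_{2n}}\to \Bun_{\cP_{2n+1}}\leftarrow \Bun_{2n+1}$ (or the adelic-quotient stacks playing that role), observe that $\Sp_{2n}$ acts transitively on the relevant partial flag variety with stabilizer $P'$, the geometric Klingen-mirabolic subgroup, and thereby reduce $\int_{\Sp_{2n}}\cF$ to a $\Gamma_c$ over the stack $\mathrm{Bun}_{P'}$-type object of the $\theta$-cuspidal datum.

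Next I would prove the geometric analogue of Lemma~\ref{lem:Klingen-mirabolic}: if a sheaf on the Klingen-mirabolic stack is $\theta$-cuspidal (all geometric constant terms along $\theta$-stable parabolics, i.e.\ along $N_{k,2n-2k,k}$, vanish), then its $\Gamma_c$ over $\Bun_{P'}$ vanishes. The proof mirrors the analytic one by induction on $n$: perform the geometric Fourier expansion along the abelian quotient $U/U'$ (using that $U\lhd U'$ with $U/U'$ abelian, \eqref{eq:normality}), split into the trivial-character piece — which is killed by $\theta$-cuspidality exactly as in \eqref{eq:vanishing_1} — and the nontrivial-character piece, which by the orbit description becomes an induction from the Klingen-mirabolic of the smaller group $H^\flat$ against $\psi_n$, whose input is again $\theta$-cuspidal; the base case $n=1$ is trivial. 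Here the Fourier expansion is the sheaf-theoretic one: integrating $\AS$-twisted over $[U/U']$-torsors, which is clean because $U/U'$ is a vector group. I would also need the geometric replacement of Corollary~\ref{mixed period} for the periods $\Sp_{2r}\ltimes U_{2r+1,k}$, proved the same way once the mirabolic vanishing for $\cP_{2r+1}$-cuspidal input is in hand.

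Finally I would assemble: take a cuspidal $\cF$, form $\pi_{2n+1}^*\cF$, and reduce $\int_{\Sp_{2n}}\cF=\Gamma_c(\pi_{2n+1}^*\cF)$ via the orbit stratification of $\Bun_{P}\backslash\cdot/\Bun_{\Sp_{2n}}$ (the geometric shadow of the classification in \S\ref{sec:orbits}, though for the purely cuspidal statement one only needs the mirabolic fibration, not the full $P_{n,n+1}$ orbit list) to a $\Gamma_c$ of a $\theta$-cuspidal sheaf on the Klingen-mirabolic stack, which vanishes by the geometric Klingen-mirabolic lemma. The main obstacle I anticipate is purely foundational rather than conceptual: making the "integration over $[U]$"/Fourier-expansion steps rigorous in the stacky setting — i.e.\ controlling $\Gamma_c$ along the non-quasi-compact unipotent fibrations $\Bun_{U}\to\Bun_{\cdot}$ and justifying the exchange of $!$-pushforward with the character-sheaf decomposition — since unlike the adelic integral one cannot appeal to absolute convergence and must instead argue via contraction/base-change lemmas for $\Bun$ of unipotent groups (cf.\ the style of Lysenko's work cited in the introduction). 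Once that package is set up, each individual step is a formal diagram chase parallel to \S\ref{sec:cuspidal}.
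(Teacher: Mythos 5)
Your proposal follows essentially the same approach as the paper's proof: geometrize the mirabolic Fourier expansion, reduce to a geometric Klingen-mirabolic vanishing lemma (the paper's Lemma~\ref{lem:geopcuspvan}), and prove the latter by induction on $n$. The ``foundational obstacle'' you correctly flag — rigorizing the Fourier expansion and the exchange of $\Gamma_c$ with the character decomposition over $\Bun$ of unipotent groups — is exactly what the paper resolves by working throughout with the derived Fourier transformation functor of Feng--Yun--Zhang \cite{FYZ3} on vector-bundle stacks such as $q_{2n,1}\colon\Bun_{P_{2n,1}}\to\Bun_{2n}\times\Bun_1$: cuspidality of $\cF$ becomes the statement that $\FT_{2n,1}(p_{2n,1}^*\cF)$ vanishes along the zero section (Lemma~\ref{lem:geounfold1small}), and the ``unfolding'' into nontrivial Whittaker coefficients is precisely restriction to the open complement $S^{\circ}_{P_{2n,1}}$. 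Two small discrepancies in your sketch: the relevant homogeneous space is $\cP_{2n}\backslash\GL_{2n}$ (encoded by the Fourier dual of $q_{2n,1}$), not $\cP_{2n+1}\backslash\GL_{2n+1}$; and for the purely cuspidal statement one does not need a geometric Corollary~\ref{mixed period} — the paper instead uses the cuspidality-transfer Lemma~\ref{lem:geoctoc1} to land directly in $\Shv(\Bun_{P_{1,2n,1}})_{\theta-\cusp}$ before invoking Lemma~\ref{lem:geopcuspvan}.
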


\begin{remark}
    The relation between theorem \ref{thm:geocuspvan} and \cite[Conjecture\,12.1.1]{BZSV} can be seen as follows: After restricting to $\Shv_{\mathrm{Nilp}}(\Bun_{2n+1})$ which is the category consisting of sheaves with nilpotent singular support, by \cite[Theorem\,3.4.6]{AGKRRV2} we have natural isomorphisms of functors \[\int_{\Sp_{2n}}\cong \Gamma_c(-\otimes\cP_X)\cong\ev(-\otimes\cP_X^r).\] Here $\cP_X$ is the period sheaf for $X=\GL_{2n+1}/\Sp_{2n}$ introduced in \cite[\S10.3]{BZSV}. The map $\ev:\Shv_{\mathrm{Nilp}}(\Bun_{2n+1})^{\otimes 2}\to\Vect$ is the counit for the miraculous duality on $\Shv_{\mathrm{Nilp}}(\Bun_{2n+1})$. The object $\cP_X^r\in\Shv_{\mathrm{Nilp}}(\Bun_{2n+1})$ is the \emph{right} spectral projection of $\cP_X$ introduced in \cite[\S12.4]{BZSV}. Therefore, theorem \ref{thm:geocuspvan} restricted to $\Shv_{\mathrm{Nilp}}(\Bun_{2n+1})$ is a consequence of \cite[Conjecture\,12.1.1]{BZSV} since the corresponding $L$-sheaf is supported on the reducible locus of $\mathrm{Loc}_{\check{G}}$. In particular, theorem \ref{thm:geocuspvan} verifies \cite[Conjecture\,12.1.1]{BZSV} for $X=\GL_{2n+1}/\Sp_{2n}$ on the cuspidal part (i.e. irreducible locus).
\end{remark}

Consider also $\pi_{2n}:\Bun_{\Sp_{2n}}\to\Bun_{2n}$ and $\pi_{\cL,2n}:\Bun_{\Sp_{2n}}^{\bA^{2n}_{\cL}}$ for any $\cL\in\Bun_1(K)$, where the later is the moduli of $(\cE_{2n}^s,\alpha:\cL\to\cE_{2n}^s)$ in which $\cE_{2n}^s\in\Bun_{\Sp_{2n}}$. Define also 
\[\int_{\Sp_{2n}}:\Shv(\Bun_{2n})\to\Vect\] via \[\int_{\Sp_{2n}}\cF:=\Gamma_c(\pi_{2n}^*\cF)\] and \[\int_{\bA^{2n}_{\cL}\times^{\Sp_{2n}}\GL_{2n}}:\Shv(\Bun_{2n})\to\Vect\] such that \[\int_{\bA^{2n}_{\cL}\times^{\Sp_{2n}}\GL_{2n}}\cF:=\Gamma_c(\pi_{\cL,2n}^*\cF).\]
We have 
\begin{theorem}\label{thm:geocuspvan2}
    For any $\cF\in\Shv(\Bun_{2n})_{\cusp}$, we have $\int_{\Sp_{2n}}\cF=0$.
\end{theorem}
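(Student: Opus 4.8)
The plan is to geometrize the proof of Corollary~\ref{cor:symplectic_period}, whose ingredients are the Klingen--mirabolic vanishing (Lemma~\ref{lem:Klingen-mirabolic}) and the regularization of the symplectic period against an Epstein series; the geometric avatar of the Epstein series is precisely the functor $\int_{\bA^{2n}_{\cL}\times^{\Sp_{2n}}\GL_{2n}}$ of the appendix, and the crux will be a geometric Klingen--mirabolic lemma. First I would prove that lemma. Then an open/closed decomposition on $\Bun_{\Sp_{2n}}^{\bA^{2n}_{\cL}}$ should give $\int_{\bA^{2n}_{\cL}\times^{\Sp_{2n}}\GL_{2n}}\cF\cong\int_{\Sp_{2n}}\cF$ for cuspidal $\cF$ and every $\cL$. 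Finally, taking $\deg\cL$ very negative, a vector-bundle computation gives a second identification of the same functor that differs by a nontrivial cohomological shift and Tate twist, and comparing the two forces $\int_{\Sp_{2n}}\cF$ to be isomorphic to a shift of itself, hence to vanish.

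\emph{Geometric Klingen--mirabolic lemma.} Let $P\sub\GL_{2n}$ be the Klingen--mirabolic subgroup of \S\ref{sec:cuspidal}, $P'=P\cap\Sp_{2n}$, and for $\cL\in\Bun_1(K)$ let $\mathcal{K}_{\cL}$ denote the moduli of pairs $(\cE^s\in\Bun_{\Sp_{2n}},\ \cL\hookrightarrow\cV_{\cE^s}$ a subbundle inclusion$)$, where $\cV_{\cE^s}$ is the underlying rank-$2n$ bundle — the evident $\cL$-twisted form of $\Bun_{P'}$ — with its map $j_{\cL}\colon\mathcal{K}_{\cL}\to\Bun_{2n}$, which factors through $\pi_{2n}$. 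I claim: if $\cF\in\Shv(\Bun_{2n})$ is $\theta$-cuspidal, meaning $\mathrm{CT}_{P_{k,2n-2k,k}}\cF=0$ for $k=1,\dots,n$, then $\Gamma_c(j_{\cL}^{*}\cF)=0$. I would prove this by induction on $n$, geometrizing the Fourier-expansion argument of Lemma~\ref{lem:Klingen-mirabolic}: writing $P=L\ltimes U$ for the Levi decomposition ($L\cong\GL_{2n-2}$, $U=N_{1,2n-2,1}$) and $P'=L'\ltimes U'$ with $L'=L\cap\Sp_{2n}\cong\Sp_{2n-2}$ and $U'$ as in \S\ref{sec:cuspidal}, the abelian quotient $V:=U/U'$ is, via the symplectic form, the standard representation of $L'$. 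Pushing $j_{\cL}^{*}\cF$ forward along the $U'$-direction of $\mathcal{K}_{\cL}$ and taking a fiberwise geometric Fourier transform along the $V$-direction, using the Artin--Schreier sheaf $\AS$, decomposes the result according to the characters of $V$: the zero-section contribution is (a twist of) the constant term of $\cF$ along $U=N_{1,2n-2,1}$, which vanishes by $\theta$-cuspidality; and since $L'\cong\Sp_{2n-2}$ acts transitively on $V^{\vee}\setminus\{0\}$ with stabilizer the Klingen--mirabolic of $\Sp_{2n-2}$, the remaining locus is a twisted form of the Klingen--mirabolic period stack for $\Sp_{2n-2}$ carrying the image of $\cF$ under the corresponding Whittaker-type coefficient along $U$. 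That image is again $\theta$-cuspidal, exactly as in the classical bookkeeping, so the inductive hypothesis applies; the case $n=1$ is trivial.

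\emph{From the geometric Epstein series to the period.} Fix $\cL$ and write $f\colon\Bun_{\Sp_{2n}}^{\bA^{2n}_{\cL}}\to\Bun_{\Sp_{2n}}$ for the map forgetting $\alpha$, so that $\pi_{\cL,2n}=\pi_{2n}\circ f$. Decomposing $\Bun_{\Sp_{2n}}^{\bA^{2n}_{\cL}}$ into the open locus $\{\alpha\neq 0\}$ and the closed zero section $\{\alpha=0\}\cong\Bun_{\Sp_{2n}}$, and applying $\Gamma_c\big((\pi_{\cL,2n})^{*}\cF\otimes-\big)$, yields a distinguished triangle with terms $\Gamma_c\big((\pi_{\cL,2n})^{*}\cF|_{\{\alpha\neq 0\}}\big)$, $\int_{\bA^{2n}_{\cL}\times^{\Sp_{2n}}\GL_{2n}}\cF$, and $\int_{\Sp_{2n}}\cF$, in that order. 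Stratifying $\{\alpha\neq 0\}$ by the vanishing divisor of $\alpha$ exhibits each stratum as $\mathcal{K}_{\cL(D)}$ for an effective divisor $D$ — and only finitely many such strata meet the support of $(\pi_{2n})^{*}\cF$, since a saturated line subbundle of $\cV_{\cE^s}$ has bounded degree over a quasi-compact locus — so the first term vanishes by the geometric Klingen--mirabolic lemma, whence $\int_{\bA^{2n}_{\cL}\times^{\Sp_{2n}}\GL_{2n}}\cF\cong\int_{\Sp_{2n}}\cF$. Now cuspidal sheaves on $\Bun_{2n}$ are supported on a quasi-compact substack — the geometric counterpart of the compact support of cusp forms — hence so is $(\pi_{2n})^{*}\cF$ on $\Bun_{\Sp_{2n}}$, say on a quasi-compact $\cU$; choosing $\deg\cL$ negative enough that $H^{1}(C,\cV_{\cE^s}\otimes\cL^{-1})=0$ for all $[\cE^s]\in\cU$, the map $f$ restricts over $\cU$ to a vector bundle of rank $r_{\cL}=2n(1-g-\deg\cL)>0$, so $f_{!}\overline{\mathbb{Q}}_{l}|_{\cU}\cong\overline{\mathbb{Q}}_{l}[-2r_{\cL}](-r_{\cL})$ and the projection formula gives $\int_{\bA^{2n}_{\cL}\times^{\Sp_{2n}}\GL_{2n}}\cF\cong\big(\int_{\Sp_{2n}}\cF\big)[-2r_{\cL}](-r_{\cL})$. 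Combining the two identifications, $\int_{\Sp_{2n}}\cF\cong\big(\int_{\Sp_{2n}}\cF\big)[-2r_{\cL}](-r_{\cL})$ with $r_{\cL}>0$, and since this complex is bounded it must vanish. (Theorem~\ref{thm:geocuspvan} would follow by the same mechanism with $\GL_{2n+1}$ in place of $\GL_{2n}$, the $\Sp_{2n}$-orbit classification of \S\ref{sec:orbits} playing the role it plays in Proposition~\ref{prop:main_vanishing}.)

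I expect the main obstacle to be the geometric Klingen--mirabolic lemma: setting up the partial Fourier transform encoding the $U/U'$-expansion precisely on these non-quasi-compact stacks, propagating the cohomological shifts and Tate twists correctly through it, and verifying that the Whittaker-type coefficient along $U$ sends $\theta$-cuspidal sheaves to $\theta$-cuspidal sheaves so that the induction closes. The remaining inputs — the open/closed triangle, the computation of $f_{!}\overline{\mathbb{Q}}_{l}$ over a quasi-compact base, and the quasi-compact support of cuspidal sheaves — should be routine.
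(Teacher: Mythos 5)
Your proposal is essentially the paper's argument for Theorem~\ref{thm:geocuspvan2}: the paper too deduces the result from the geometric Klingen--mirabolic lemma (its Lemma~\ref{lem:geopcuspvan}, proved by the same Fourier-transform induction on $n$) combined with quasi-compact support of $\pi_{2n}^{*}\cF$ and the shift produced by twisting $\cL$ deep into negative degree, so that $\Gamma_c(\pi_{2n}^{*}\cF)$ is forced to be a nonzero shift of itself and hence zero. The paper organizes the open--closed step slightly differently (it works directly with the vector bundle $g$ minus its zero section over the support $U$, rather than passing explicitly through $\int_{\bA^{2n}_{\cL}\times^{\Sp_{2n}}\GL_{2n}}$), but the two computations are equivalent.

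Two points to flag. First, your geometric Klingen--mirabolic lemma is stated for a $\theta$-cuspidal $\cF\in\Shv(\Bun_{2n})$, with $\theta$-cuspidality defined by vanishing of $\CT_{P_{k,2n-2k,k}}\cF$ \emph{on $\Bun_{2n}$}. As you yourself anticipate in your last paragraph, this formulation does not close the induction: after the Fourier expansion along $U/U'$, the Whittaker-type coefficient of $\cF$ is naturally a sheaf on a moduli of $(1,2n-4,1)$-flagged $\GL_{2n-2}$-bundles, not on $\Bun_{2n-2}$, so the inductive hypothesis as you state it does not apply to it. The paper avoids this by formulating the lemma with input in $\Shv(\Bun_{P_{1,2n,1}})_{\theta-\cusp}$ (the parabolic moduli) and output in $\Shv(\Bun_1)$, which is exactly the class preserved by one step of the expansion and is also what is needed to absorb the varying $\cL(D)$ in the stratification of $\{\alpha\neq 0\}$. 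Your statement is still true (it follows by pulling back along $p_{1,2n-2,1}$, which takes cuspidal objects to $\theta$-cuspidal objects on the flag moduli), but the induction has to be run on the flag-moduli version.

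Second, the parenthetical claim that Theorem~\ref{thm:geocuspvan} (the $\GL_{2n+1}$ case) would follow ``by the same mechanism'' is not how the paper proceeds and is not available here: $\pi_{2n+1}\colon\Bun_{\Sp_{2n}}\to\Bun_{2n+1}$, $\cE^s\mapsto\cE^s\oplus\cO$, carries no line-bundle twist making the source a vector bundle over anything relevant, so there is no analogue of the degree-going-to-$-\infty$ shift argument. The paper instead proves Theorem~\ref{thm:geocuspvan} by a Jacquet--Shalika-style unfolding: derived Fourier transform along the last-column unipotent, identification of the non-zero-section contribution with the geometric Klingen--mirabolic period stack, and then Lemma~\ref{lem:geopcuspvan}. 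The orbit classification of \S\ref{sec:orbits} plays no role in this vanishing statement (it is used only for the Eisenstein-period computation in \S\ref{sec:proof}).
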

\begin{theorem}\label{thm:geocuspvan3}
    For any $\cF\in\Shv(\Bun_{2n})_{\cusp}$, we have $\int_{\bA^{2n}_{\cL}\times^{\Sp_{2n}}\GL_{2n}}\cF=0$.
\end{theorem}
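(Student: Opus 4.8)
The plan is to geometrize the classical argument behind the vanishing, for a cusp form $\varphi$ on $\GL_{2n}$, of the $\Sp_{2n}$-period against a theta series attached to the standard representation — the statement that combines Corollary~\ref{cor:symplectic_period} (the contribution of the zero vector) with the unfolding argument of Section~\ref{sec:cuspidal} based on Lemma~\ref{lem:Klingen-mirabolic} (the rest), in the spirit of Corollary~\ref{mixed period}. Factor $\pi_{\cL,2n}=\pi_{2n}\circ q$, where $q\colon\Bun_{\Sp_{2n}}^{\bA^{2n}_\cL}\to\Bun_{\Sp_{2n}}$ forgets the section $\alpha\colon\cL\to\cE_{2n}^s$, so that $\int_{\bA^{2n}_\cL\times^{\Sp_{2n}}\GL_{2n}}\cF=\Gamma_c(\Bun_{\Sp_{2n}},\pi_{2n}^*\cF\otimes q_!\overline{\mathbb{Q}}_l)$. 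If $\bA^{2n}_\cL$ were replaced by its derived enhancement — the derived stack classifying $(\cE_{2n}^s,\alpha\in\mathrm{R}\Gamma(C,\cE_{2n}^s\otimes\cL^\vee))$ — then $q$ would be a vector-bundle stack of constant relative Euler characteristic $\chi(C,\cE_{2n}^s\otimes\cL^\vee)=2n(1-g-\deg\cL)$ (constant because $\det\cE_{2n}^s\cong\cO_C$), $q_!\overline{\mathbb{Q}}_l$ would equal $\overline{\mathbb{Q}}_l$ up to a fixed shift and Tate twist, and the theorem would follow at once from Theorem~\ref{thm:geocuspvan2}. With the honest $H^0$-moduli of the statement this is false: $q_!\overline{\mathbb{Q}}_l$ departs from its derived value exactly along the locus $H^1(\cE_{2n}^s\otimes\cL^\vee)\neq 0$, i.e. along the Klingen reductions of $\cE_{2n}^s$, and one must stratify accordingly.

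Concretely, decompose $\Bun_{\Sp_{2n}}^{\bA^{2n}_\cL}$ into the zero section $\{\alpha=0\}\cong\Bun_{\Sp_{2n}}$ and its open complement $\{\alpha\neq 0\}$ — the geometric avatar of writing a theta kernel as $\Phi(0)\cdot\mathbbm{1}+\sum_{v\neq 0}\Phi(v\,\cdot)$. The recollement triangle for $\Gamma_c$ reduces the theorem to: (i) the contribution of the zero section, which is $\int_{\Sp_{2n}}\cF$ and vanishes by Theorem~\ref{thm:geocuspvan2}; and (ii) the vanishing of $\Gamma_c$ of $\pi_{2n}^*\cF$ along $\{\alpha\neq 0\}$. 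On $\{\alpha\neq 0\}$ the map $\alpha$ is injective; let $\cM\subseteq\cE_{2n}^s$ be its saturation — an isotropic line subbundle — and stratify by the length $d\geq 0$ of the torsion sheaf $\cM/\operatorname{im}\alpha$. The $d$-th stratum fibers over the stack of pairs (symplectic bundle, isotropic line subbundle of degree $\deg\cL+d$), the fibers being the Abel--Jacobi fibers in $\Sym^d C$ recording the divisor of $\alpha$ inside $\cM$, and its map to $\Bun_{2n}$ factors through the proper parabolic $P_{1,2n-1}$ of $\GL_{2n}$ via the flag $\cM\subset\cE_{2n}^s$. This mirrors the classical unfolding $\int_{[\Sp_{2n}]}\varphi(h)\sum_{v\neq0}\Phi(vh)\,\rd h=\int_{P'(\bA)\backslash\Sp_{2n}(\bA)}\Phi(e_{2n}h)\bigl(\int_{[P']}\varphi(p'h)\,\rd p'\bigr)\,\rd h$ with $P'$ the Klingen--mirabolic subgroup. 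One then shows that each stratum contributes $0$ by a geometric Klingen--mirabolic vanishing — the geometrization of Lemma~\ref{lem:Klingen-mirabolic}, proved by induction on $n$ as in Section~\ref{sec:cuspidal} (and already needed for Theorems~\ref{thm:geocuspvan} and \ref{thm:geocuspvan2}): at each stage one integrates over the Heisenberg unipotent radical of a Klingen block, Fourier--expands along the abelian quotient $U/U'$ — realized sheaf-theoretically as a decomposition twisted by the Artin--Schreier sheaf $\AS$ — and invokes cuspidality of $\cF$, reducing to the analogous $\GL_{2n-2}$-problem and terminating at $n=1$.

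I expect the open locus to be the main obstacle. Because the fibres of the $d$-th stratum over the Levi $\Bun_{\GL_1\times\GL_{2n-1}}$ are not constant — they remember the symplectic structure and the divisor data — the vanishing does not follow formally from $\CT_{P_{1,2n-1}}\cF=0$; one genuinely needs the Klingen--mirabolic induction, and its sheaf-theoretic bookkeeping (the iterated Artin--Schreier--Fourier decompositions, matching the additive characters $\psi_n$ with the correct $\AS$-twists, and the $n=1$ base case) is the technical heart. Two secondary points also need care: $\Bun_{\Sp_{2n}}$ is not quasi-compact and the stratification by $d$ is infinite, so one must check that $\Gamma_c$ commutes with the pertinent filtered colimits — the geometric counterpart of the convergence results of Section~\ref{sec:convergence}, controlled by Harder--Narasimhan truncation as in the proof of Proposition~\ref{prop:function_field_compact_support}; and all shifts and Tate twists must be tracked so that the conclusion emerges as the unadorned vanishing $\int_{\bA^{2n}_\cL\times^{\Sp_{2n}}\GL_{2n}}\cF=0$.
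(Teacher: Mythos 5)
Your overall route matches the paper's: decompose $\Bun_{\Sp_{2n}}^{\bA^{2n}_\cL}$ into the zero section $\{\alpha=0\}\cong\Bun_{\Sp_{2n}}$ (killed by Theorem~\ref{thm:geocuspvan2}) and its open complement $\{\alpha\neq 0\}$, which is to be killed by a geometric Klingen--mirabolic vanishing (the paper's Lemma~\ref{lem:geopcuspvan}). Where you overcomplicate is the open piece. You stratify it further by $d=\deg(\cM/\operatorname{im}\alpha)$, which produces infinitely many strata, Abel--Jacobi fibers in $\Sym^d C$, and a convergence worry that you flag but do not resolve. The paper's Corollary~\ref{cor:geocuspvan4} sidesteps all of this: taking the saturation of $\alpha(\cL)$ gives a single map $\Bun_{\Sp_{2n}}^{\bA^{2n}_\cL\setminus\{0\}}\to\Bun_{P'_{1,2n-2,1}}$ covering every $d$ at once; its fibers --- nonzero sections of $\cE_1\otimes\cL^{-1}$ --- depend only on $\cE_1$ and hence are pulled back from $\Bun_1$ along $b_{1,2n-2,1}$, so the projection formula detaches them and reduces everything to $\int_{1,2n-2,1}(p_{1,2n-2,1}^*\cF)=0$, which is Lemma~\ref{lem:geopcuspvan} together with the observation that $p_{1,2n-2,1}^*$ takes cuspidal sheaves to $\theta$-cuspidal ones. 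This also makes precise a point you only half-make: the right reduction is to the $\theta$-stable Klingen parabolic $P_{1,2n-2,1}$ via the full flag $\cE_1\subset\cE_1^{\perp}\subset\cE_{2n}^s$, so what you need is $\theta$-cuspidality, not merely $\CT_{P_{1,2n-1}}\cF=0$. In short: right strategy and right key lemma, but the $d$-stratification and its attendant convergence concerns are unnecessary once you factor through the saturation map; your opening derived-vector-bundle heuristic is a reasonable sanity check but, as you correctly note, does not apply to the classical $H^0$-moduli.
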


\subsection{Geometric Klingen-mirabolic period}
Now we introduce a geometric analogue of Klingen-mirabolic period and formulate its cuspidal vanishing result. This result will imply theorems \ref{thm:geocuspvan}, \ref{thm:geocuspvan2}, and \ref{thm:geocuspvan3}.
Consider the correspondence
\begin{equation}
\begin{tikzcd}
    \Bun_{P_{1,2n,1}'}\ar[r, "\pi_{1,2n,1}"] \ar[d, "b_{1,2n,1}"'] & \Bun_{P_{1,2n,1}} \\
    \Bun_{1}
\end{tikzcd}
\end{equation}
where we recall that \[\Bun_{P_{1,2n,1}'}= \{(\cE_1\sub\cE_{2n}^s)\}\] and the maps are defined by \[b_{1,2n,1}(\cE_{1}\sub\cE_{2n}^s)=\cE_1\in\Bun_1.\] \[\pi_{1,2n,1}(\cE_1\sub \cE_{2n}^s)=(\cE_1\sub\cE_1^{\perp}\sub\cE_{2n}^s)\in\Bun_{P_{1,2n,1}}.\] Here we use $\cE_k$ (later we will also use $\cE_k'$) to denote an arbitrary choice of vector bundle of rank $n$. We use $\cE_{k}^s$ to denote an arbitrary choice of a symplectic vector bundle of rank $n$. We use $\cE_a\sub\cE_b$ to denote an arbitrary choice of bundle inclusion (i.e. $\cE_a$ should be saturated in $\cE_b$).

We define $\int_{1,2n,1}:\Shv(\Bun_{P_{1,2n,1}})\to\Shv(\Bun_{1})$ by \[\int_{1,2n,1}\cF:=b_{1,2n,1,!}\pi_{1,2n,1}^*\in\Shv(\Bun_1), \cF\in\Shv(\Bun_{P_{1,2n,1}}).\]

For any $a,b\in\mathbb{Z}_{\geq 0}$ such that $2a+b=2n$, consider the correspondence
\begin{equation}
    \begin{tikzcd}
        \Bun_{P_{1,a,b,a,1}} \ar[r, "p_{a,b,a}"] \ar[d, "q_{a,b,a}"'] & \Bun_{P_{1,2n,1}} \\
        \Bun_{P_{1,a}}\times\Bun_b\times \Bun_{P_{a,1}}.
    \end{tikzcd}
\end{equation}
We define the functor $\CT_{a,b,a}:\Shv(\Bun_{P_{1,2n,1}})\to\Shv(\Bun_{P_{1,a}}\times\Bun_b\times \Bun_{P_{a,1}})$ by \[\CT_{a,b,a}(\cF):=q_{a,b,a,!}p_{a,b,a}^*\cF.\] 

We define $\Shv(\Bun_{P_{1,2n,1}})_{\theta-\cusp}\sub \Shv(\Bun_{P_{1,2n,1}})$ to be the full-subcategory such that $\cF\in\Shv(\Bun_{P_{1,2n,1}})_{\theta-\cusp}$ if \[\CT_{a,b,a}(\cF)=0\] for all $a,b$ as above.

We have the following vanishing result for the geometric Klingen-mirabolic period:
\begin{lemma}\label{lem:geopcuspvan}
    For any $n\in\mathbb{Z}_{\geq 0}$ and $\cF\in\Shv(\Bun_{P_{1,2n,1}})_{\theta-\cusp}$, we have $\int_{1,2n,1}\cF=0$.
\end{lemma}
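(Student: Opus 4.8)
The plan is to mirror the proof of Lemma~\ref{lem:Klingen-mirabolic}, translating integration over adelic quotients into $!$-pushforward of sheaves and the Fourier expansion over the compact abelian group $[U/U']$ into the geometric Fourier--Deligne transform. I would argue by induction on $n$. The base case $n=0$ is immediate: the only relevant constant-term functor is $\CT_{0,0,0}$, which vanishes on $\cF$ by $\theta$-cuspidality, and $\int_{1,0,1}\cF$ is obtained from $\CT_{0,0,0}\cF$ by tensoring with a fixed complex on the Levi $\Bun_1\times\Bun_1$ (accounting for forgetting the symplectic form) and a $!$-pushforward, hence vanishes.

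For the inductive step, set up the relevant unipotent groups: let $U'\sub\Sp_{2n+2}$ be the unipotent radical of the Klingen parabolic $P'_{1,2n,1}$, a Heisenberg-type group scheme over the Levi $\GL_1\times\Sp_{2n}$, and let $U\sub\GL_{2n+2}$ be the unipotent radical of $P_{1,2n,1}$, so that $U'\lhd U$ with $A:=U/U'$ a vector group; this is the geometric form of~\eqref{eq:normality}. First I would factor $\int_{1,2n,1}\cF=b_{1,2n,1,!}\pi_{1,2n,1}^*\cF$ through an intermediate stack $Y$ remembering the isotropic line $\cE_1$, the rank-$2n$ symplectic bundle $\cE_1^{\perp}/\cE_1$, and the data classifying the $U$-torsor (but not the finer $U'$-torsor); pushing forward along the $U'$-directions produces a sheaf $\cF_{U'}$ on $Y$ -- the analogue of $f_{U'}$ -- through which $\int_{1,2n,1}\cF$ is computed by a further $!$-pushforward.

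Next, apply the Fourier--Deligne transform along $A=U/U'$, with kernel $\AS$ pulled back along the canonical pairing $A\times A^{\vee}\to\Ga$; this splits the remaining $!$-pushforward into the contribution of the zero section of $A^{\vee}$ and that of its open complement $A^{\vee}\smallsetminus 0$. The zero-section term is the $!$-pushforward of the constant term of $\cF$ along all of $U$, i.e.\ it is read off from $\CT_{0,2n,0}\cF$, which vanishes because $\cF\in\Shv(\Bun_{P_{1,2n,1}})_{\theta-\cusp}$; this is precisely the geometric form of ``$f_U=0$ by cuspidality''. For the complement term, over the Levi $\Bun_{\GL_1}\times\Bun_{\Sp_{2n}}$ the stack $A^{\vee}\smallsetminus 0$ is a single orbit stack -- a twist of $\Bun_{\GL_1\times P^{\prime,\flat}}$, where $P^{\prime,\flat}$ is the Klingen-mirabolic subgroup of $\Sp_{2n}$ -- geometrizing the single $H^{\flat}(F)$-orbit of nontrivial characters of $[U/U']$ used in Lemma~\ref{lem:Klingen-mirabolic}. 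Descending along $A^{\vee}\smallsetminus 0\to(\text{orbit stack})$ identifies the complement term, in families over the residual data, with $\int_{1,2n-2,1}\cF'$ for the sheaf $\cF'$ on $\Bun_{P_{1,2n-2,1}}$ obtained from $\cF$ by the $U'$-pushforward followed by the $\psi_n$-Fourier coefficient; one then checks, using the compatibility of the functors $\CT_{a,b,a}$ with this operation (the geometric counterpart of the observation that $f_{U',\psi_n}$ is $\theta$-cuspidal for $\GL_{2n}$), that $\cF'\in\Shv(\Bun_{P_{1,2n-2,1}})_{\theta-\cusp}$. By the inductive hypothesis $\int_{1,2n-2,1}\cF'=0$, and combining the two contributions gives $\int_{1,2n,1}\cF=0$.

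The main obstacle is the Fourier step and the descent that follows it: carrying out the Fourier--Deligne transform and the open/closed decomposition of $A^{\vee}$ rigorously for $\Shv$ of these (higher) Artin stacks, controlling its interaction with the non-proper pushforwards to $\Bun_1$, and -- most delicately -- verifying that the $\psi_n$-Fourier coefficient $\cF'$ inherits $\theta$-cuspidality, which amounts to matching the orbit stratification of $A^{\vee}\smallsetminus 0$ precisely against the constant-term functors $\CT_{a,b,a}$ one rank below. In Lemma~\ref{lem:Klingen-mirabolic} this verification is the one-line ``one checks easily''; here it is the heart of the argument.
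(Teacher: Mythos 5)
Your overall strategy is the right one and agrees in spirit with the paper's: induct on $n$, use a geometric Fourier transform to split off the trivial-character contribution (which vanishes by $\theta$-cuspidality through $\CT_{0,2n,0}$), and match the non-trivial-character contribution with $\int_{1,2n-2,1}$ of a $\theta$-cuspidal sheaf one step down. You also correctly identify that checking the $\theta$-cuspidality of the descended sheaf (the geometrization of ``$f_{U',\psi_n}$ is $\theta$-cuspidal'') is the substantive point; in the paper this is exactly Lemma~\ref{lem:geoctoc2}, which is proved by a short Cartesian-square argument. So the proposal is directionally correct, and it identifies the heart of the argument.

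However, the geometric implementation you sketch has a genuine gap at the step you describe as ``push forward along the $U'$-directions, then apply Fourier--Deligne along $A = U/U'$.'' The subgroup $U'$ (the Heisenberg unipotent radical of the Klingen parabolic) is normal in $U$ but is \emph{not} normal in $P_{1,2n,1}$: the Levi $\GL_1\times\GL_{2n}\times\GL_1$ does not preserve the symplectic graph cutting out $U'$ inside $U$. Consequently there is no quotient group $P_{1,2n,1}/U'$, no intermediate $\Bun$-stack ``remembering the $U$-torsor but not the $U'$-torsor'' over $\Bun_{\GL_1\times\GL_{2n}\times\GL_1}$, and no direct Fourier--Deligne transform along $A=U/U'$ on the $\GL$-side. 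The paper's proof of Lemma~\ref{lem:geopcuspvan} circumvents this by first passing to the quotient $\overline{P}_{1,2n,1}:=P_{1,2n,1}/\Ga$ by the \emph{center} $\Ga$ of $U$ (which \emph{is} normal in $P_{1,2n,1}$), so that $q_{1,2n,1}:\Bun_{\overline{P}_{1,2n,1}}\to\Bun_{2n}\times\Bun_1^2$ is a (derived) vector bundle of the full rank $4n$. The Fourier transform is taken there; the rank-$2n$ vector bundle $\Bun_{\overline{P}'_{1,2n,1}}\to\Bun_{\Sp_{2n}}\times\Bun_1$ (which geometrizes $U'/\Ga$) then embeds as a ``diagonal'' $\overline{i}_{2n}$ inside the rank-$4n$ symplectic base change $\Bun^s_{\overline{P}_{1,2n,1}}$, and the Fourier--dual pushforward $\overline{i}^\vee_{2n,!}$ along the ``anti-diagonal'' $\alpha_1-\alpha_2$ is what plays the role of Fourier expansion over $U/U'$. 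This is a nontrivial rearrangement of the classical argument, not a verbatim translation.

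Two further points you elide are also load-bearing in the paper. First, the open locus produced by the Fourier step is the nonvanishing locus of a \emph{map of sheaves} $\cE_1\otimes\om^{-1}\to\cE_{2n}^s$; geometrically this is not a single orbit stack but must be stratified by the saturation of the image (the map $\iota_{1,2n,1}:\tilM_{1,2n,1}\to M_{1,2n,1}$, a bijection on points inducing a stratification). Only after this stratification does the comparison with $\int_{1,2n-2,1}$ become a Cartesian-square base change, which is the content of diagram~\eqref{diag:geofund1} and its analogue in the proof of Lemma~\ref{lem:geopcuspvan}. Second, the shifts $d_{2n}, d'_{2n}$ accumulated by the derived Fourier transforms have to be tracked carefully; they do not affect vanishing but they are a nontrivial part of making the manipulations precise. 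Your base case $n=0$ is fine.
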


\subsection{Proofs}

\begin{proof}[Proof of \ref{thm:geocuspvan} assuming lemma \ref{lem:geopcuspvan}] In this section, we give proofs to the results stated before. The proofs are mostly direct translations of their classical counterparts.

    Consider the diagram
    \begin{equation}
        \begin{tikzcd}
            S_{P_{2n,1}}^{\circ} \ar[r, hook, "j_{2n,1}"] \ar[rr, bend left, "q_{2n,1}^{\vee,\circ}"] & S_{P_{2n,1}} \ar[r, "q_{2n,1}^\vee"] & \Bun_{2n}\times \Bun_1 \ar[r, bend left, "s_{2n,1}"] \ar[l, bend left, "s_{2n,1}^{\vee}"] & \Bun_{P_{2n,1}} \ar[l, "q_{2n,1}"'] \ar[r, "p_{2n,1}"] & \Bun_{2n+1} \\
            &&\Bun_{\Sp_{2n}} \ar[u, "\pi_{2n,1}"] &&
        \end{tikzcd}
    \end{equation}
Here, the maps are defined as 
\[\pi_{2n,1}(\cE_{2n}^s)=(\cE_{2n}^s,\cO)\]
\[q_{2n,1}(\cE_{2n}\sub\cE_{2n+1})=(\cE_{2n},\cE_{2n+1}/\cE_{2n})\]
\[p_{2n,1}(\cE_{2n}\sub\cE_{2n+1})=\cE_{2n+1}\]
and $q_{2n,1}^\vee:S_{P_{2n,1}}\to \Bun_{2n}\times\Bun_1$ is the underlying classical stack of the (derived) dual vector bundle of $q_{2n,1}:\Bun_{P_{2n,1}}\to\Bun_{2n}\times\Bun_1$. The maps $s_{2n,1}$ and $s_{2n,1}^{\vee}$ are the zero-section maps.

Note that \[S_{P_{2n,1}}=\{(\cE_1,\cE_{2n},\alpha_{2n}:\cE_1\otimes\om^{-1}\to\cE_{2n})\}.\] The open substack $S^{\circ}_{P_{2n,1}}\sub S_{P_{2n,1}}$ is cut out by the condition such that $\alpha_{2n}\neq 0$.

We use $\FT_{2n,1}:\Shv(\Bun_{P_{2n,1}})\to\Shv(S_{P_{2n,1}})$ to denote the (derived) Fourier transformation functor introduced in \cite[\S6.1]{FYZ3}. Now we recall its definition as follows. Consider the following diagram \begin{equation}
\begin{tikzcd}
    \bA^1 & S_{P_{2n,1}}\times_{\Bun_{2n}\times\Bun_1} \Bun_{P_{2n,1}} \ar[r, "\widetilde{q}_{2n,1}^{\vee}"] \ar[d, "\widetilde{q}_{2n,1}"] \ar[l, "\ev_{2n,1}"']  & \Bun_{P_{2n,1}} \ar[d, "q_{2n,1}"] \\
    &S_{P_{2n,1}} \ar[r, "q_{2n,1}^{\vee}"] & \Bun_{2n}\times\Bun_1 \arrow[ul, phantom, "\lrcorner", very near end]
    \end{tikzcd}
\end{equation} where the map $\ev_{2n,1}$ is the natural evaluation map between dual vector bundles. We have the (derived) Fourier transformation functor \[\FT_{2n,1}:\Shv(\Bun_{P_{2n,1}})\to\Shv(S_{P_{2n,1}})\] defined by \[\FT_{2n,1}(\cF)=\tilq_{2n,1,!}(\tilq_{2n,1}^{\vee,*}\cF\otimes \ev_{2n,1}^*\AS[d_{2n,1}]).\] The inverse of this functor is given by \[\FT_{2n,1}^{-1}:\Shv(S_{P_{2n,1}})\to \Shv(\Bun_{P_{2n,1}})\] given by \[\FT_{2n,1}^{-1}(\cG)=\tilq_{2n,1,!}^{\vee}(\tilq_{2n,1}^*\cG\otimes\ev_{2n,1}^*\AS_{-}[d_{2n,1}]).\] Here we use $d_{2n,1}$ to denote the relative (virtual) dimension of the vector bundle $q_{2n,1}:\Bun_{P_{2n,1}}\to\Bun_{2n}\times \Bun_1$ (which varies on different connected components of $\Bun_{2n}\times\Bun_1$).
\begin{lemma}\label{lem:geounfold1small}
    For any $\cF\in\Shv(\Bun_{2n+1})_{\cusp}$, we have $s_{2n,1}^{\vee,*}\FT_{2n,1}(p_{2n,1}^*\cF)=0$.
\end{lemma}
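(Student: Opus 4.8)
The plan is to recognize $s_{2n,1}^{\vee,*}\FT_{2n,1}(p_{2n,1}^*\cF)$ as (a shift of) a geometric constant term of $\cF$ along a proper parabolic of $\GL_{2n+1}$, so that the asserted vanishing becomes an instance of the cuspidality hypothesis.

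The first step is the standard compatibility of the derived Fourier transform with restriction along the zero section of the dual bundle. Since $s_{2n,1}^{\vee}\colon\Bun_{2n}\times\Bun_1\to S_{P_{2n,1}}$ is a section of $q_{2n,1}^{\vee}$, base-changing the evaluation diagram defining $\FT_{2n,1}$ along $s_{2n,1}^{\vee}$ degenerates it: the map $\tilq_{2n,1}$ becomes $q_{2n,1}\colon\Bun_{P_{2n,1}}\to\Bun_{2n}\times\Bun_1$, the map $\tilq_{2n,1}^{\vee}$ becomes an isomorphism, and $\ev_{2n,1}$ becomes the zero map, so $\ev_{2n,1}^*\AS$ restricts to the constant sheaf. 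Base change then yields a canonical isomorphism $s_{2n,1}^{\vee,*}\FT_{2n,1}(\cG)\cong q_{2n,1,!}(\cG)[d_{2n,1}]$ for any $\cG\in\Shv(\Bun_{P_{2n,1}})$; alternatively one may simply cite \cite[\S6.1]{FYZ3} for this identity.

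Taking $\cG=p_{2n,1}^*\cF$ gives $s_{2n,1}^{\vee,*}\FT_{2n,1}(p_{2n,1}^*\cF)\cong q_{2n,1,!}p_{2n,1}^*\cF[d_{2n,1}]$, and $q_{2n,1,!}p_{2n,1}^*$ is precisely (a shift of) the geometric constant-term functor along the maximal parabolic $P_{2n,1}\sub\GL_{2n+1}$, since $\Bun_{P_{2n,1}}=\{\cE_{2n}\sub\cE_{2n+1}\}$ sits in the correspondence with $p_{2n,1}(\cE_{2n}\sub\cE_{2n+1})=\cE_{2n+1}$ and $q_{2n,1}(\cE_{2n}\sub\cE_{2n+1})=(\cE_{2n},\cE_{2n+1}/\cE_{2n})$. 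As $\cF\in\Shv(\Bun_{2n+1})_{\cusp}$, its constant term along every proper parabolic vanishes, in particular along $P_{2n,1}$, so $q_{2n,1,!}p_{2n,1}^*\cF=0$ and the claim follows. The only point needing care is the base-change bookkeeping around the zero section in the second step; there is no genuine obstacle here, since this lemma is the easy first step of the geometric unfolding, the real content being deferred to Lemma~\ref{lem:geopcuspvan}.
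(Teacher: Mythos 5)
Your proof is correct and takes exactly the same route as the paper's one-line argument: the key identity $s_{2n,1}^{\vee,*}\FT_{2n,1}(\cG)\cong q_{2n,1,!}\cG[d_{2n,1}]$ reduces the claim to vanishing of the constant term $\CT_{2n,1}(\cF)$ along the maximal parabolic $P_{2n,1}$, which holds by cuspidality. Your intermediate base-change bookkeeping around the zero section is exactly the implicit content of the paper's first equality, so the two proofs coincide.
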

\begin{proof}
    This follows directly as \[s_{2n,1}^{\vee,*}\FT_{2n,1}(p_{2n,1}^*\cF)=q_{2n,1,!}p_{2n,1}^*\cF[d_{2n,1}]=\CT_{2n,1}(\cF)[d_{2n,1}]=0.\]
\end{proof}

Consider the stack $M_{2n,1}:=\Bun_{\Sp_{2n}}\times_{\Bun_{2n}\times\Bun_1} S_{P_{2n,1}}^{\circ}\times_{\Bun_{2n}\times\Bun_1}\Bun_{P_{2n,1}}$, which has moduli description \[M_{2n,1}=\{(\cE_{2n}^s\sub \cE_{2n+1}, \cE_{2n+1}/\cE_{2n}^s\cong \cO, \om^{-1}\inj \cE_{2n}^s) \}\]

There is a natural map \[f_{2n,1}:=(p_{2n,1}\circ\pr_3)\times (\ev_{2n,1}\circ (\pi_{2n}\times \id )\circ \pr_{1,2}): M_{2n,1}\to \Bun_{2n+1}\times \bA^1.\]

We have \begin{equation}
\begin{split}
    \int_{\Sp_{2n}}\cF&=\Gamma_c(\pi_{2n,1}^*s_{2n,1}^*p_{2n,1}^*\cF) \\
    &\cong \Gamma_c(\pi_{2n,1}^*s_{2n,1}^*\FT_{2n,1}^{-1}\circ \FT_{2n,1}( p_{2n,1}^*\cF)) \\
    &\cong \Gamma_c(\pi_{2n,1}^*q_{2n,1,!}^{\vee} \FT_{2n,1}( p_{2n,1}^*\cF))[d_{2n,1}] \\
    &\cong \Gamma_c(\pi_{2n,1}^*q_{2n,1,!}^{\vee,\circ}j_{2n,1}^*\FT_{2n,1}( p_{2n,1}^*\cF))[d_{2n,1}] \\
    &\cong \Gamma_c(f_{2n,1}^*(\cF\boxtimes \AS))[2d_{2n,1}].
    \end{split}
\end{equation}

Consider the moduli stack \[\tilM_{2n,1}=\{(\cE_1\sub\cE_{2n}^s\sub\cE_{2n+1},\cE_{2n+1}/\cE_{2n}^s\cong\cO,\om^{-1}\inj \cE_1)\}.\] There is a natural map $\iota_{2n,1}:\tilM_{2n,1}\to M_{2n,1}$ given by \[\iota(\cE_1\sub\cE_{2n}^s\sub\cE_{2n+1},\cE_{2n+1}/\cE_{2n}^s\cong\cO,\om^{-1}\inj \cE_1)=(\cE_{2n}^s\sub \cE_{2n+1}, \cE_{2n+1}/\cE_{2n}^s\cong \cO, \om^{-1}\inj \cE_1\sub \cE_{2n}^s).\] Note that the map $\iota_{2n,1}$ is a bijection on points and induces a stratification of $M_{2n,1}$. Denote \[\tilf_{2n,1}:=f_{2n,1}\circ \iota_{2n,1}:\tilM_{2n,1}\to \Bun_{2n+1}\times\bA^1.\] We are reduced to show \begin{equation}
    \Gamma_c(\tilf_{2n,1}^*(\cF\boxtimes \AS))=0.
\end{equation}
Consider the moduli stack \[\tilN_{2n,1}=\{(\cE_1\sub\cE_{2n-1}\sub \cE_{2n}\sub\cE_{2n+1},\cE_{2n+1}/\cE_{2n}\cong\cO,\cE_{2n}/\cE_{2n-1}\inj \om)\}.\] We have a diagram
\begin{equation}\label{diag:geofund1}
\begin{tikzcd}
    \tilM_{2n,1} \ar[rr, bend left, "\tilf_{2n,1}"] \ar[r, "\pi_{M,2n,1}"] \ar[d, "q_{M,2n,1}"]  & \tilN_{2n,1} \ar[r, "f_{N,2n,1}"] \ar[d, "q_{N,2n,1}"] & \Bun_{2n+1}\times \bA^1 \\
    \Bun_{P_{1,2n-2,1}'} \ar[r, "\pi_{1,2n-2,1}"] & \Bun_{P_{1,2n-2,1}} \arrow[ul, phantom, "\lrcorner", very near end] & 
    \end{tikzcd}
\end{equation}
where the maps are given by
\[\begin{split}\pi_{M,2n,1}(\cE_1\sub\cE_{2n}^s\sub\cE_{2n+1},\cE_{2n+1}/\cE_{2n}^s\cong\cO,\om^{-1}\inj\cE_1)\\=(\cE_1\sub\cE_1^{\perp}\sub \cE_{2n}^s\sub\cE_{2n+1},\cE_{2n+1}/\cE_{2n}^s\cong\cO,\cE_{2n}^s/\cE_{2n-1}\cong \cE_1^*\inj \om)\end{split}\]
\[q_{M,2n,1}(\cE_1\sub\cE_{2n}^s\sub\cE_{2n+1},\cE_{2n+1}/\cE_{2n}^s\cong\cO,\cE_{2n}^s/\cE_{2n-1}\inj \om)=(\cE_1\sub\cE_{2n}^s)\]
\[q_{N,2n,1}(\cE_1\sub\cE_{2n-1}\sub \cE_{2n}\sub\cE_{2n+1},\cE_{2n+1}/\cE_{2n}\cong\cO,\cE_{2n}/\cE_{2n-1}\inj \om)=(\cE_1\sub\cE_{2n-1}\sub\cE_{2n}).\] Note that the square in \eqref{diag:geofund1} is Cartesian, and $\tilf_{2n,1}=f_{N,2n,1}\circ\pi_{M,2n,1}$, we get \begin{equation}
    \Gamma_c(\tilf_{2n,1}^*(\cF\boxtimes \AS))=\Gamma_c(\pi_{1,2n-2,1}^*q_{N,2n,1,!}f_{N,2n,1}^*(\cF\boxtimes\AS)).
\end{equation} 
\begin{lemma}\label{lem:geoctoc1}
    Consider the diagram \begin{equation}
        \begin{tikzcd}
            \Bun_{P_{1,2n-2,1,1}} \ar[r, "p_{1,2n-2,1,1}\times  q_{1,1}'"] \ar[d, "q_{1,2n-2,1,1}"] & \Bun_{2n+1}\times \Bun_{P_{1,1}} \\
            \Bun_{P_{1,2n-2,1}}
        \end{tikzcd}
    \end{equation}
    where the maps are \[p_{1,2n-2,1,1}(\cE_1\sub\cE_{2n-1}\sub\cE_{2n}\sub\cE_{2n+1})=\cE_{2n+1}\] \[q_{1,1}'(\cE_1\sub\cE_{2n-1}\sub\cE_{2n}\sub\cE_{2n+1})=(\cE_{2n}/\cE_{2n-1}\sub\cE_{2n+1}/\cE_{2n-1})\] \[q_{1,2n-2,1,1}(\cE_1\sub\cE_{2n-1}\sub\cE_{2n}\sub\cE_{2n+1})=(\cE_1\sub\cE_{2n-1}\sub\cE_{2n}).\] Then for any $\cF\in\Shv(\Bun_{2n+1})_{\cusp}$, $\cG\in\Shv(\Bun_{P_{1,1}})$, we have \[q_{1,2n-2,1,1,!}(p_{1,2n-2,1,1}\times q_{1,1}')^*(\cF\boxtimes \cG)\in\Shv(\Bun_{P_{1,2n-2,1}})_{\theta-\cusp}.\]
\end{lemma}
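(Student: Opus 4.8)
The plan is to deduce the lemma from the cuspidality of $\cF$ along a single proper parabolic of $\GL_{2n+1}$, in the spirit of the classical fact (used in Proposition~\ref{prop:main_vanishing}) that the mirabolic Fourier coefficient of a cusp form is $\theta$-cuspidal. Fix $a,b\in\mathbb{Z}_{\geq 0}$ with $2a+b=2n-2$; since membership in $\Shv(\Bun_{P_{1,2n-2,1}})_{\theta-\cusp}$ is tested by the functors $\CT_{a,b,a}$, it suffices to show $\CT_{a,b,a}(\cH)=0$ for $\cH:=q_{1,2n-2,1,1,!}(p_{1,2n-2,1,1}\times q_{1,1}')^*(\cF\boxtimes\cG)$. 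First I would apply base change to $p_{a,b,a}^*\circ q_{1,2n-2,1,1,!}$: the relevant square over $\Bun_{P_{1,2n-2,1}}$ is Cartesian, and the fiber product $\cX:=\Bun_{P_{1,a,b,a,1}}\times_{\Bun_{P_{1,2n-2,1}}}\Bun_{P_{1,2n-2,1,1}}$ is identified with $\Bun_Q$ for the parabolic $Q:=P_{1,a,b,a,1,1}$ of $\GL_{2n+1}$, parametrizing flags $\cE_1\sub\cE_{1+a}\sub\cE_{1+a+b}\sub\cE_{2n-1}\sub\cE_{2n}\sub\cE_{2n+1}$. Tracking the maps rewrites this as
\[
    \CT_{a,b,a}(\cH)\;\cong\;\tilq_!\big(\tilp^*\cF\otimes\tilq_{1,1}^*\cG\big),
\]
where $\tilp:\Bun_Q\to\Bun_{2n+1}$ is the total-bundle map, $\tilq_{1,1}:\Bun_Q\to\Bun_{P_{1,1}}$ sends $(\cE_\bullet)$ to $(\cE_{2n}/\cE_{2n-1}\sub\cE_{2n+1}/\cE_{2n-1})$, and $\tilq:\Bun_Q\to\Bun_{P_{1,a}}\times\Bun_b\times\Bun_{P_{a,1}}$ is the associated-graded map defining $\CT_{a,b,a}$, namely $(\cE_\bullet)\mapsto\big((\cE_1\sub\cE_{1+a}),\,\cE_{1+a+b}/\cE_{1+a},\,(\cE_{2n-1}/\cE_{1+a+b}\sub\cE_{2n}/\cE_{1+a+b})\big)$.

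The heart of the argument is a structural observation relating $\tilq,\tilq_{1,1},\tilp$ to the constant term along a larger parabolic. Enlarge $Q$ to $Q':=P_{1+a,\,b,\,a+2}$, a parabolic of $\GL_{2n+1}$ with Levi $M':=\GL_{1+a}\times\GL_b\times\GL_{a+2}$, and let $L\sub M'$ be the parabolic whose preimage under $Q'\twoheadrightarrow M'$ equals $Q$. Then $Q=Q'\times_{M'}L$ as group schemes, hence $\Bun_Q\cong\Bun_{Q'}\times_{\Bun_{M'}}\Bun_L$, under which $\tilp=p_{Q'}\circ\mathrm{pr}_1$. I would then check that both $\tilq$ and $\tilq_{1,1}$ factor through $\phi:=\mathrm{pr}_2:\Bun_Q\to\Bun_L$: indeed $\Bun_L\cong\Bun_{P_{1,a}}\times\Bun_b\times\Bun_{P_{a,1,1}}$, and $\tilq$, $\tilq_{1,1}$ are $\phi$ composed with the evident forgetful maps $r:\Bun_L\to\Bun_{P_{1,a}}\times\Bun_b\times\Bun_{P_{a,1}}$ and $r_{1,1}:\Bun_L\to\Bun_{P_{1,1}}$. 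Conceptually, $\phi$ remembers exactly the finer flag inside each of the three Levi blocks of $Q'$ while forgetting the extensions between those blocks --- precisely the information that $\tilq$ and $\tilq_{1,1}$ detect, whereas $\tilp$ (via $\mathrm{pr}_1$) retains those extensions. Pinning down $Q,Q',L$ and verifying these factorizations is the only step requiring genuine (but routine) bookkeeping; I expect this to be the main obstacle.

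Granting it, the conclusion is formal. The projection formula for $\phi$ gives $\CT_{a,b,a}(\cH)\cong r_!\big(r_{1,1}^*\cG\otimes\phi_!(\tilp^*\cF)\big)$, and proper base change along the Cartesian square $\Bun_Q\cong\Bun_{Q'}\times_{\Bun_{M'}}\Bun_L$ yields $\phi_!(\tilp^*\cF)=\phi_!(\mathrm{pr}_1^*p_{Q'}^*\cF)\cong q_L^*\big(q_{Q',!}\,p_{Q'}^*\cF\big)=q_L^*\,\CT_{Q'}(\cF)$, where $q_L:\Bun_L\to\Bun_{M'}$. Since $Q'$ has at least two Levi blocks (its first has rank $1+a<2n+1$), it is a proper parabolic, so $\CT_{Q'}(\cF)=0$ by cuspidality of $\cF$; hence $\phi_!(\tilp^*\cF)=0$ and $\CT_{a,b,a}(\cH)=0$. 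As this holds for every admissible $(a,b)$, the lemma follows. Note that $\cG$ plays no role beyond being carried along by the projection formula, consistent with its being allowed arbitrary; everything past the middle bookkeeping step is formal six-functor manipulation, so I do not anticipate further difficulties.
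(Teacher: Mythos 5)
Your proposal is essentially the paper's argument, recast. The paper proves this by a single $3\times 3$ diagram in which the upper-left and lower-right squares are Cartesian: the functor $\CT_{a,b,a}\circ q_{1,2n-2,1,1,!}(p_{1,2n-2,1,1}\times q_{1,1}')^*$ is read off along the top and left correspondences, then rewritten via base change as the composite of the bottom and right correspondences, and the right correspondence is exactly $\CT_{P_{a+1,b,a+2}}\times\id$ applied to $\cF\boxtimes\cG$, which vanishes by cuspidality of $\cF$. You identify the same stacks ($\Bun_{P_{1,a,b,a,1,1}}$ as the fiber product, the parabolic $P_{a+1,b,a+2}$ of $\GL_{2n+1}$ as the one along which cuspidality is used) and the same two Cartesian squares; the only difference is presentational, in that you phrase the lower-right square group-theoretically as $Q = Q'\times_{M'}L$ and pull $\cG$ out of the $!$-pushforward by the projection formula before applying proper base change, whereas the paper simply carries the $\Bun_{P_{1,1}}$ factor along through the whole diagram. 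Both routes terminate at $\CT_{P_{a+1,b,a+2}}(\cF)=0$, so the content is identical.
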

\begin{proof}[Proof of lemma \ref{lem:geoctoc1}]
    For any $a,b\in\mathbb{Z}_{\geq 0}$ such that $2a+b=2n-2$, consider the diagram
    \begin{equation}
    \adjustbox{max width=\linewidth}{%
    \begin{tikzcd}[column sep=small]
        \Bun_{P_{1,2n-2,1}} \arrow[dr, phantom, "\ulcorner", very near end] & \Bun_{P_{1,2n-2,1,1}} \ar[r] \ar[l] & \Bun_{2n+1} \times \Bun_{P_{1,1}} \\
        \Bun_{P_{1,a,b,a,1}} \ar[u] \ar[d] & \Bun_{P_{1,a,b,a,1,1}} \ar[u] \ar[l] \ar[r]\ar[d] & \Bun_{P_{a+1,b,a+2}}\times\Bun_{P_{1,1}} \ar[u]\ar[d] \\
        \Bun_{P_{1,a}}\times \Bun_b\times \Bun_{P_{a,1}} & \Bun_{P_{1,a}}\times \Bun_b\times\Bun_{P_{a,1,1}}\ar[l]\ar[r]& \Bun_{a+1}\times\Bun_b\times\Bun_{a+2}\times\Bun_{P_{1,1}} \arrow[ul, phantom, "\lrcorner", very near end]
        \end{tikzcd}
        }
    \end{equation} where the maps are the obvious ones. Note that the functor \[\CT_{a,b,a}\circ q_{1,2n-2,1,1,!}(p_{1,2n-2,1,1}\times q_{1,1}')^*\] is given by transformation along the top and left correspondences. Since the left-upper and right-lower squares are Cartesian, the functor above is also given by the composition of transformations along the bottom and right correspondences. Note that the right correspondence transforms $\cF\boxtimes\cG$ to zero since $\cF\in\Shv(\Bun_{2n+1})_{\cusp}$, and we are done.
\end{proof}

Now we apply lemma \ref{lem:geoctoc1} to conclude the proof. By lemma \ref{lem:geopcuspvan}, we only need to show \begin{equation}\label{eq:geocusp1}q_{N,2n,1,!}f_{N,2n,1}^*(\cF\boxtimes\AS)\in\Shv(\Bun_{P_{1,2n,1}})_{\theta-\cusp}.\end{equation} Consider the diagram \begin{equation}
    \begin{tikzcd}
        \tilN_{1,1} \ar[r, "\ev_{1,1}"] \ar[d, "q_{N,1,1}"] & \bA^1 \\
        \Bun_{P_{1,1}}
    \end{tikzcd}
\end{equation}
where we define \[\tilN_{1,1}=\{(\cE_1\sub\cE_2,\cE_2/\cE_1\cong\cO,\cE_1\inj\om)\}\]\[q_{N,1,1}(\cE_1\sub\cE_2,\cE_2/\cE_1\cong\cO,\cE_1\inj\om)=(\cE_1\sub\cE_2).\] Then one easily sees that \[q_{N,2n,1,!}f_{N,2n,1}^*(\cF\boxtimes\AS)\cong q_{1,2n-2,1,1,!}(p_{1,1}\times q_{1,2n-2,1,1}')^*(\cF\boxtimes \cG)\] for $\cG=q_{N,1,1,!}\ev_{1,1}^*\AS$. Then the desired cuspidality \eqref{eq:geocusp1} follows from lemma \ref{lem:geoctoc1}.

\end{proof}

\begin{proof}[Proof of lemma \ref{lem:geopcuspvan}]
    The proof follows the same ideas as the proof of theorem \ref{thm:geocuspvan}.
    We proceed by induction on $n$. The case $n=0$ is trivial. From now on, we assume $n\geq 1$. Consider the quotient group $\overline{P}_{1,2n,1}:=P_{1,2n,1}/\Ga$ and $\overline{P}_{1,2n,1}':=P_{1,2n,1}'/\Ga$, where the subgroup $\Ga\sub P_{1,2n,1}$ identifies with \begin{equation*}
    \begin{pmatrix}
         &  & * \\
          &   &  \\
          &          & 
    \end{pmatrix}\sub
    \begin{pmatrix}
        * & * & * \\
          &  * & * \\
          &          & *
    \end{pmatrix}
\end{equation*} and similarly for $\Ga\sub P_{1,2n,1}'$. 
 
Consider diagram
\begin{equation}
\begin{tikzcd}
    \Bun_{P_{1,2n,1}'}\ar[r, "i_{2n}"] \ar[rr, bend left, "\pi_{1,2n,1}"] \ar[d, "r_{2n}'"] & \Bun_{P_{1,2n,1}}^s \ar[r, "h_{2n}"] \ar[d, "r_{2n}^s"] & \Bun_{P_{1,2n,1}} \ar[d, "r_{2n}"] \\
    \Bun_{\overline{P}_{1,2n,1}'} \ar[r, "\overline{i}_{2n}"] \ar[dr, "q_{1,2n,1}'"] & \Bun_{\overline{P}_{1,2n,1}}^s \ar[r, "\overline{h}_{2n}"] \ar[d, "q_{1,2n,1}^s"] \arrow[ul, phantom, "\lrcorner", very near end] & \Bun_{\overline{P}_{1,2n,1}} \ar[d, "q_{1,2n,1}"] \arrow[ul, phantom, "\lrcorner", very near end] \\
     \Bun_1 & \Bun_{\Sp_{2n}}\times \Bun_1\ar[r, "\pi_{2n}\times \Delta_{-} "] \ar[l, "\pr_2"'] & \Bun_{2n}\times \Bun_1\times\Bun_1 \arrow[ul, phantom, "\lrcorner", very near end] \arrow[dl, phantom, "\urcorner", very near end] \\
     S_{\overline{P}_{1,2n,1}'} \ar[ur, "q_{1,2n,1}'^{\vee}"] & S^{s}_{\overline{P}_{1,2n,1}} \ar[l, "\overline{i}_{2n}^{\vee}"] \ar[r, "\overline{h}_{2n}^{\vee}"] \ar[u, "q_{1,2n,1}^{s,\vee}"'] & S_{\overline{P}_{1,2n,1}} \ar[u, "q_{1,2n,1}^{\vee}"'] \arrow[dl, phantom, "\urcorner", very near end] \\
      & S^{s,\circ}_{\overline{P}_{1,2n,1}}  \ar[r, "\overline{h}_{2n}^{\vee,\circ}"] \ar[u, "j_{1,2n,1}^s"'] & S^{\circ}_{\overline{P}_{1,2n,1}} \ar[u, "j_{1,2n,1}"']
     \end{tikzcd}.
\end{equation} We now explain the definition of stacks involved in the diagram. The stacks in the forth row are defined as underlying classical stacks of the derived dual vector bundles of the stacks in the second row. The stacks in the bottom row are obtained from the vector bundles in the forth row the complements of zero sections. The stacks in the right most column are clear as written. The stacks in the middle column are defined to make the four squares on the right Cartesian. The stacks in the left most columns are also clear as written. The map $i_{2n}$ is defined such that $h_{2n}\circ i_{2n}=\pi_{1,2n,1}$. The map $\overline{i}_{2n}^{\vee}$ is the dual map of $\overline{i}_{2n}$. In terms of moduli stacks, we have \[\Bun_{\overline{P}'_{1,2n,1}}=\{(\cE_1\sub \cE_{2n+1},\cE_{2n}^s,\cE_{2n}^s\cong\cE_{2n+1}/\cE_{1})\}\] \[\Bun_{\overline{P}_{1,2n,1}}^s=\{(\cE_1\sub\cE_{2n+1},\cE_{2n}^s\sub\cE_{2n+1}',\cE_{2n+1}/\cE_{1}\cong \cE_{2n}^s, \cE_{1}^*\cong \cE_{2n+1}'/\cE_{2n}^s)\}\]
\[S_{\overline{P}_{1,2n,1}'}=\{(\cE_1,\cE_{2n}^s,\cE_1\otimes\om^{-1}\to\cE_{2n}^s)\}\]
\[S^s_{\overline{P}_{1,2n,1}}=\{(\cE_1,\cE_{2n}^s;\alpha_1,\alpha_2:\cE_1\otimes\om^{-1}\to\cE_{2n}^s)\}\]
 We have moduli description of the maps 
\[\overline{i}_{2n}(\cE_1\sub \cE_{2n+1},\cE_{2n}^s,\cE_{2n}^s\cong\cE_{2n+1}/\cE_{1})=(\cE_1\sub\cE_{2n+1},\cE_{2n}^s\sub\cE_{2n+1}^*,\cE_{2n+1}/\cE_{1}\cong \cE_{2n}^s, \cE_{1}^*\cong \cE_{2n+1}^*/\cE_{2n}^s)\]
\[\overline{i}_{2n}^{\vee}(\cE_1,\cE_{2n}^s;\alpha_1,\alpha_2:\cE_1\otimes\om^{-1}\to\cE_{2n}^s)=(\cE_1,\cE_{2n}^s,\alpha_1-\alpha_2:\cE_1\otimes\om^{-1}\to\cE_{2n}^s).\]
\[\Delta_{-}(\cE_1)=(\cE_1,\cE_1^*).\]

Consider the derived Fourier transformations \[\FT_{1,2n,1}:\Shv(\Bun_{\overline{P}_{1,2n,1}})\to\Shv(S_{\overline{P}_{1,2n,1}})\] \[\FT^s_{1,2n,1}:\Shv(\Bun^s_{\overline{P}_{1,2n,1}})\to\Shv(S^s_{\overline{P}_{1,2n,1}})\]
\[\FT'_{1,2n,1}:\Shv(\Bun_{\overline{P}'_{1,2n,1}})\to\Shv(S'_{\overline{P}_{1,2n,1}}).\]

\begin{lemma}
    For any $\cF\in\Shv(\Bun_{P_{1,2n,1}})_{\theta-\cusp}$, the sheaf $\FT_{1,2n,1}(r_{2n,!}\cF)\in\Shv(S_{\overline{P}_{1,2n,1}})$ is supported on the open substack $S_{\overline{P}_{1,2n,1}}^{\circ}\sub S_{\overline{P}_{1,2n,1}}$.
\end{lemma}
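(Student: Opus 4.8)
The statement is the sheaf-theoretic incarnation of the step in the proof of Lemma~\ref{lem:Klingen-mirabolic} where one Fourier-expands along $[U/U']$ and notes that the trivial-character coefficient equals the full constant term $f_U$, which vanishes by cuspidality. The plan is as follows. First I would unwind, from the big diagram, the description of $\FT_{1,2n,1}$ as a fiberwise (derived) Fourier transform, in the sense of \cite{FYZ3}, along the abelianized unipotent radical of $\overline{P}_{1,2n,1}$ over $\Bun_{2n}\times\Bun_1\times\Bun_1$, and identify the closed complement $Z:=S_{\overline{P}_{1,2n,1}}\smallsetminus S^{\circ}_{\overline{P}_{1,2n,1}}$ as the sub-bundle cut out by the vanishing of the coordinate(s) dual to the ``$\cE_1\otimes\om^{-1}\to\cE_{2n}^{s}$'' direction --- this is precisely the locus that $S^{s,\circ}_{\overline{P}_{1,2n,1}}$ and $\overline{h}_{2n}^{\vee,\circ}$ remove. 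The task then reduces to showing $\FT_{1,2n,1}(r_{2n,!}\cF)|_{Z}=0$, equivalently $i_{Z}^{*}\FT_{1,2n,1}(r_{2n,!}\cF)=0$ for the closed embedding $i_{Z}\colon Z\hookrightarrow S_{\overline{P}_{1,2n,1}}$.

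For this I would invoke the standard compatibility of a fiberwise Fourier transform with restriction to a (partial) zero section: the restriction of $\FT$ to the zero section of a summand is the $!$-pushforward along that summand, up to the shift by $d_{2n,1}$, and this is transported along the dual summand by base change. Hence $\FT_{1,2n,1}(r_{2n,!}\cF)|_{Z}$ is computed, up to shift and an equivalence, by $!$-pushing $r_{2n,!}\cF$ forward along the relevant unipotent direction. Since $r_{2n}$ has already integrated the $\Ga$-direction, this iterated pushforward of $\cF$ is --- possibly after a further innocuous pushforward --- one of the constant-term functors $\CT_{a,b,a}$ from the definition of $\Shv(\Bun_{P_{1,2n,1}})_{\theta-\cusp}$ (in the extreme case it is literally $\CT_{0,2n,0}(\cF)=q_{0,2n,0,!}\cF$, the full constant term along $N_{P_{1,2n,1}}$), and so it vanishes by the hypothesis $\cF\in\Shv(\Bun_{P_{1,2n,1}})_{\theta-\cusp}$. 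Feeding $i_{Z}^{*}\FT_{1,2n,1}(r_{2n,!}\cF)=0$ into the open--closed recollement for $j_{1,2n,1}\colon S^{\circ}_{\overline{P}_{1,2n,1}}\hookrightarrow S_{\overline{P}_{1,2n,1}}$ yields the support statement.

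The main obstacle is the bookkeeping in the big diagram: matching the tower $S_{\overline{P}_{1,2n,1}}$, $S^{s}_{\overline{P}_{1,2n,1}}$, $S^{s,\circ}_{\overline{P}_{1,2n,1}}$ and their primed analogues, together with the iterated zero-section and ``$\alpha_{1}-\alpha_{2}$'' maps, against the clean ``vector bundle $\oplus$ its dual'' picture, keeping the virtual-dimension shifts $d_{2n,1}$ (which vary over connected components) consistent throughout, and --- the one genuinely non-formal point --- pinning down that the pushforward of $r_{2n,!}\cF$ along the degenerating direction really is one of the $\CT_{a,b,a}$ and not some other constant term. The symplectic (``$s$'') decorations, which encode the self-duality of $\cE_{2n}^{s}$ and the constraint identifying the two $\GL_1$-factors dually, are exactly where this dictionary has to be set up with care; once it is in place, the vanishing is immediate.
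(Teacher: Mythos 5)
Your proposal is, at its core, the paper's proof (the paper only says ``the proof is similar to the proof of Lemma~\ref{lem:geounfold1small}''): restricting $\FT_{1,2n,1}(r_{2n,!}\cF)$ to the zero section gives, by the Fourier transform/zero-section compatibility,
\[
q_{1,2n,1,!}\,r_{2n,!}\cF\,[d_{2n}]\;\cong\;\CT_{0,2n,0}(\cF)[d_{2n}]\;=\;0,
\]
since $q_{1,2n,1}\circ r_{2n}$ agrees with $q_{0,2n,0}$ up to a reordering of the factors $\Bun_{2n}\times\Bun_1\times\Bun_1$. One correction, though: the closed complement $Z=S_{\overline{P}_{1,2n,1}}\smallsetminus S^{\circ}_{\overline{P}_{1,2n,1}}$ is the \emph{entire} zero section of $q_{1,2n,1}^{\vee}:S_{\overline{P}_{1,2n,1}}\to\Bun_{2n}\times\Bun_1\times\Bun_1$, not a sub-bundle cut out by one ``coordinate direction,'' and $\overline{h}_{2n}^{\vee,\circ}$ is a base-change map between the two open complements rather than an open immersion into $S_{\overline{P}_{1,2n,1}}$. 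Consequently the ``extreme case'' you single out, where the restriction is literally $\CT_{0,2n,0}(\cF)=q_{0,2n,0,!}\cF$, is in fact the \emph{only} case; there is no partial Fourier transform, no ``further innocuous pushforward,'' and no ambiguity about which $\CT_{a,b,a}$ appears. Likewise $S_{\overline{P}_{1,2n,1}}$ parametrizes an unconstrained $\cE_{2n}\in\Bun_{2n}$ rather than a symplectic $\cE_{2n}^{s}$, so the bookkeeping concern about the symplectic (``$s$'') decorations belongs to the primed and ``$s$''-decorated variants in the big diagram and plays no role in this particular lemma.
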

The proof is similar to the proof of lemma \ref{lem:geounfold1small}.

We use $d_{2n}$ to denote the (virtual) relative dimension of the vector bundle $q_{1,2n,1}:\Bun_{\overline{P}_{1,2n,1}}\to\Bun_{2n}\times\Bun_1\times\Bun_1$, and $d_{2n}'$ to denote the (virtual) relative dimension of $q_{1,2n,1}':\Bun_{\overline{P}'_{1,2n,1}}\to\Bun_{\Sp_{2n}}\times\Bun_1$.

Consider the stack \[M_{1,2n,1}=(\Bun_{\Sp_{2n}}\times\Bun_1)\times_{S_{\overline{P}_{1,2n,1}'}} S_{\overline{P}_{1,2n,1}}^{s,\circ}\times_{\Bun_{2n}\times\Bun_1\times\Bun_1} \Bun_{P_{1,2n,1}}.\] It has moduli description \[M_{1,2n,1}=\{(\cE_1\sub\cE_{2n+1}\sub\cE_{2n+2},\cE_{2n}^s,\cE_{2n}^s\cong\cE_{2n+1}/\cE_1,\cE_1^*\cong\cE_{2n+2}/\cE_{2n+1},\cE_{1}\otimes\om^{-1}\inj\cE_{2n}^s)\}.\]  It is equipped with canonical maps \[f_{1,2n,1}:=(\pr_3,\ev_{1,2n,1}\circ\pr_{2,3}):M_{1,2n,1}\to \Bun_{P_{1,2n,1}}\times \bA^1\] \[c_{1,2n,1}:=\pr_2\circ\pr_1:M_{1,2n,1}\to \Bun_1.\] Here $\ev_{1,2n,1}:S_{\overline{P}_{1,2n,1}}^{s,\circ}\times_{\Bun_{2n}\times\Bun_1\times\Bun_1} \Bun_{P_{1,2n,1}}\to \bA^1$ is given by the restriction of the evaluation map between dual vector bundles $q_{1,2n,1}:\Bun_{\overline{P}_{1,2n,1}}\to\Bun_{2n}\times\Bun_1\times\Bun_1$ and $q_{1,2n,1}^{\vee}:S_{\overline{P}_{1,2n,1}}\to\Bun_{2n}\times\Bun_1\times\Bun_1$.

Now we can compute for any $\cF\in\Shv(\Bun_{P_{1,2n,1}})_{\theta-\cusp}$:
\begin{equation}
\begin{split}
    \int_{1,2n,1}\cF&\cong\pr_{2,!}q_{1,2n,1,!}'r_{2n,!}'i_{2n}^*h_{2n}^*\cF\\
    &\cong\pr_{2,!}q_{1,2n,1,!}'\overline{i}_{2n}^*\overline{h}_{2n}^*r_{2n,!}\cF \\
    &\cong\pr_{2,!}q_{1,2n,1,!}'\overline{i}_{2n}^*\overline{h}_{2n}^*\FT_{1,2n,1}^{-1}\circ\FT_{1,2n,1}(r_{2n,!}\cF) \\
    &\cong\pr_{2,!}q_{1,2n,1,!}'\overline{i}_{2n}^*\FT_{1,2n,1}^{s,-1}(\overline{h}_{2n}^{\vee,*}\FT_{1,2n,1}(r_{2n,!}\cF)) \\
    &\cong\pr_{2,!}q_{1,2n,1,!}'\FT_{1,2n,1}'^{-1}(\overline{i}_{2n,!}^{\vee}\overline{h}_{2n}^{\vee,*}\FT_{1,2n,1}(r_{2n,!}\cF))[d_{2n}-d_{2n}']\\
    &\cong \pr_{2,!}s_{1,2n,1}'^{\vee,*}\overline{i}_{2n,!}^{\vee}\overline{h}_{2n}^{\vee,*}\FT_{1,2n,1}(r_{2n,!}\cF)[d_{2n}-2d_{2n}'] \\
    &\cong c_{1,2n,1,!}f_{1,2n,1}^*(\cF\boxtimes \AS)[2d_{2n}-2d'_{2n}]
    \end{split}
\end{equation} where the map $s_{1,2n,1}'^{\vee}:\Bun_{\Sp_{2n}}\times\Bun_1\to S_{\overline{P}_{1,2n,1}'}$ is the zero section of the vector bundle $q_{1,2n,1}'^{\vee}:S_{\overline{P}_{1,2n,1}'}\to \Bun_{\Sp_{2n}}\times\Bun_1$.

Consider the moduli stack \[\tilM_{1,2n,1}:=\{(\cE_1\sub\cE_{2n+1}\sub\cE_{2n+2},\cE_1'\sub \cE_{2n}^s,\cE_{2n}^s\cong\cE_{2n+1}/\cE_1,\cE_1^*\cong\cE_{2n+2}/\cE_{2n+1},\cE_{1}\otimes\om^{-1}\inj\cE_1')\}.\] It is equipped with a natural map \[\iota_{1,2n,1}:\tilM_{1,2n,1}\to M_{1,2n,1}\] \[\begin{split}\iota_{1,2n,1}(\cE_1\sub\cE_{2n+1}\sub\cE_{2n+2},\cE_1'\sub \cE_{2n}^s,\cE_{2n}^s\cong\cE_{2n+1}/\cE_1,\cE_1^*\cong\cE_{2n+2}/\cE_{2n+1},\cE_{1}\otimes\om^{-1}\inj\cE_1')\\=(\cE_1\sub\cE_{2n+1}\sub\cE_{2n+2}, \cE_{2n}^s,\cE_{2n}^s\cong\cE_{2n+1}/\cE_1,\cE_1^*\cong\cE_{2n+2}/\cE_{2n+1},\cE_{1}\otimes\om^{-1}\inj\cE_1'\sub\cE_{2n}^s) .\end{split}\] This map is a bijection on points and induces a stratification on $M_{1,2n,1}$. Consider the commutative diagram \begin{equation}
    \begin{tikzcd}
        &\tilM_{1,2n,1} \ar[dl, "\tilc_{1,2n,1}"'] \ar[d, "\iota_{1,2n,1}"] \ar[dr, "\tilf_{1,2n,1}"]& \\
        \Bun_1 & M_{1,2n,1} \ar[l, "c_{1,2n,1}"'] \ar[r, "f_{1,2n,1}"] & \Bun_{P_{1,2n,1}}\times\bA^1
    \end{tikzcd}.
\end{equation} We are reduced to prove \[\tilc_{1,2n,1,!}\tilf^*_{1,2n,1}(\cF\boxtimes \AS)=0\] for any $\cF\in\Shv(\Bun_{P_{1,2n,1}})_{\theta-\cusp}$.

Consider the moduli stack \[\tilN_{1,2n,1}=\{(\cE_1\sub\cE_2\sub\cE_{2n}\sub\cE_{2n+1}\sub\cE_{2n+2},\cE_1\otimes\om^{-1}\inj\cE_2/\cE_1)\}.\] Then we have a natural map $\pi_{M,1,2n,1}:\tilM_{1,2n,1}\to\tilN_{1,2n,1}$ \[\begin{split}\pi_{M,1,2n,1}(\cE_1\sub\cE_{2n+1}\sub\cE_{2n+2},\cE_1'\sub \cE_{2n}^s,\cE_{2n}^s\cong\cE_{2n+1}/\cE_1,\cE_1\cong\cE_{2n+2}/\cE_{2n+1},\cE_{1}\otimes\om^{-1}\inj\cE_1')\\=(\cE_1\sub\cE_2\sub\cE_{2n}'\sub\cE_{2n+1}\sub\cE_{2n+2},\cE_1\otimes\om^{-1}\inj\cE_1'\cong \cE_2/\cE_1)\end{split}\] where $\cE_2$, $\cE_{2n}'$ are the preimages of $\cE_1'$, $\cE_1'^{\perp}$ under $\cE_{2n+1}\surj\cE_{2n}^s$.

Consider the diagram in which the square is Cartesian \begin{equation}
    \begin{tikzcd}
        \tilM_{1,2n,1} \ar[r, "\pi_{M,1,2n,1}"] \ar[d, "q_{M,1,2n,1}"] & \tilN_{1,2n,1} \ar[d, "q_{N,1,2n,1}"] \ar[r, "f_{N,1,2n,1}"] & \Bun_{P_{1,2n,1}}\times\bA^1 \\
        \Bun_{P'_{1,2n-2,1}}\times\Bun_1 \ar[r, "\pi_{1,2n,1}\times \Delta_{-}"'] \ar[d, "\pr_2"] & \Bun_{P_{1,2n-2,1}}\times\Bun_1\times\Bun_1 \arrow[ul, phantom, "\lrcorner", very near end] & \\
        \Bun_1 &&
    \end{tikzcd}
\end{equation}
where the maps are defined as \[\begin{split}q_{M,1,2n,1}(\cE_1\sub\cE_{2n+1}\sub\cE_{2n+2},\cE_1'\sub \cE_{2n}^s,\cE_{2n}^s\cong\cE_{2n+1}/\cE_1,\cE_1^*\cong\cE_{2n+2}/\cE_{2n+1},\cE_{1}\otimes\om^{-1}\inj\cE_1')\\=(\cE_1'\sub\cE_1'^{\perp}\sub\cE_{2n}^s,\cE_1) \end{split}\]
\[\begin{split}q_{N,1,2n,1}(\cE_1\sub\cE_2\sub\cE_{2n}\sub\cE_{2n+1}\sub\cE_{2n+2},\cE_1\otimes\om^{-1}\inj\cE_2/\cE_1)\\=(\cE_2/\cE_1\sub\cE_{2n}/\cE_1\sub\cE_{2n+1}/\cE_1,\cE_1,\cE_{2n+2}/\cE_{2n+1})\end{split}\]
\[\Delta_{-}(\cE_1)=(\cE_1,\cE_1^*)\]
\[\begin{split}f_{N,1,2n,1}(\cE_1\sub\cE_2\sub\cE_{2n}\sub\cE_{2n+1}\sub\cE_{2n+2},\cE_1\otimes\om^{-1}\inj\cE_2/\cE_1)\\=(\cE_1\sub\cE_{2n+1}\sub\cE_{2n+2},\ev(\cE_1\sub\cE_2,\cE_1\otimes\om^{-1}\inj \cE_2/\cE_1))\end{split}.\] Note that $f_{N,1,2n,1}\circ \pi_{M,1,2n,1}=\tilf_{1,2n,1}$, $\pr_2\circ q_{M,1,2n,1}=\tilc_{1,2n,1}$. We only need to show \[\pr_{2,!}(\pi_{1,2n,1}\times\Delta_{-})^*q_{N,1,2n,1,!}f_{N,1,2n,1}^*(\cF\boxtimes\AS)=0.\] This follows from the following lemma and induction hypothesis:
\begin{lemma}\label{lem:geoctoc2}
    Consider the diagram
    \[\begin{tikzcd}
        \Bun_{P_{1,1,2n-2,1,1}} \ar[r, "p_{1,1,2n-2,1,1}\times q'_{2n-2}\times q''_{2n-2}"] \ar[d, "q_{1,1,2n-2,1,1}"] & \Bun_{P_{1,2n,1}}\times \Bun_{P_{1,1}}\times\Bun_{P_{1,1}} \\
        \Bun_{P_{1,2n-2,1}}
    \end{tikzcd}\] where the maps are \[p_{1,1,2n-2,1,1}(\cE_1\sub\cE_{2}\sub\cE_{2n}\sub\cE_{2n+1}\sub\cE_{2n+2})=(\cE_1\sub\cE_{2n+1}\sub\cE_{2n+2})\]
    \[q'_{2n-2}(\cE_1\sub\cE_{2}\sub\cE_{2n}\sub\cE_{2n+1}\sub\cE_{2n+2})=(\cE_{1}\sub\cE_2)\]
    \[q''_{2n-2}(\cE_1\sub\cE_{2}\sub\cE_{2n}\sub\cE_{2n+1}\sub\cE_{2n+2})=(\cE_{2n+1}/\cE_{2n}\sub\cE_{2n+2}/\cE_{2n})\]
    \[q_{1,1,2n-2,1,1}(\cE_1\sub\cE_{2}\sub\cE_{2n}\sub\cE_{2n+1}\sub\cE_{2n+2})=(\cE_{2}/\cE_1\sub\cE_{2n}/\cE_1\sub\cE_{2n+1}/\cE_1).\] For any $\cF\in\Shv(\Bun_{P_{1,2n,1}})_{\theta-\cusp}$ and $\cG\in\Shv(\Bun_{1,1}\times\Bun_{P_{1,1}})$, we have \[q_{1,1,2n-2,1,1,!}(p_{1,1,2n-2,1,1}\times q'_{2n-2}\times q''_{2n-2})^*(\cF\boxtimes\cG)\in\Shv(\Bun_{P_{1,2n-2,1}})_{\theta-\cusp}.\]
\end{lemma}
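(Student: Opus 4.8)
The plan is to imitate the proof of lemma~\ref{lem:geoctoc1} almost verbatim. The only two structural differences are that the ``$p$''-leg of the correspondence now lands in the parabolic-bundle stack $\Bun_{P_{1,2n,1}}$ rather than in $\Bun_{2n+1}$, so that the vanishing at the end of the argument will be supplied by the hypothesis $\cF\in\Shv(\Bun_{P_{1,2n,1}})_{\theta-\cusp}$ rather than by full cuspidality, and that there are now two $\Bun_{P_{1,1}}$-factors carrying $\cG$ instead of one, which play the same passive role as the single such factor in lemma~\ref{lem:geoctoc1}.

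Fix $a,b\in\Z_{\geq 0}$ with $2a+b=2n-2$; we must show that $\CT_{a,b,a}$ kills $q_{1,1,2n-2,1,1,!}(p_{1,1,2n-2,1,1}\times q'_{2n-2}\times q''_{2n-2})^*(\cF\boxtimes\cG)$. Introduce the moduli stack $\Bun_{P_{1,1,a,b,a,1,1}}$ of flags $\cE_1\sub\cE_2\sub\cE_{2+a}\sub\cE_{2+a+b}\sub\cE_{2n}\sub\cE_{2n+1}\sub\cE_{2n+2}$ with successive subquotient ranks $1,1,a,b,a,1,1$, together with its evident forgetful maps to $\Bun_{P_{1,a,b,a,1}}$ (the source of $\CT_{a,b,a}$ on $\Bun_{P_{1,2n-2,1}}$, obtained by passing to quotients modulo $\cE_1$) and to $\Bun_{P_{1,1,2n-2,1,1}}$ (obtained by forgetting $\cE_{2+a}$ and $\cE_{2+a+b}$). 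Exactly as in lemma~\ref{lem:geoctoc1} one verifies on $S$-points that the resulting square over $\Bun_{P_{1,2n-2,1}}$ is Cartesian, whence $\CT_{a,b,a}\circ q_{1,1,2n-2,1,1,!}(p_{1,1,2n-2,1,1}\times q'_{2n-2}\times q''_{2n-2})^*$ is computed by pull--push along the correspondence with apex $\Bun_{P_{1,1,a,b,a,1,1}}$, with legs to $\Bun_{P_{1,2n,1}}\times\Bun_{P_{1,1}}\times\Bun_{P_{1,1}}$ on the one hand and to $\Bun_{P_{1,a}}\times\Bun_b\times\Bun_{P_{a,1}}$ on the other.

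Next one performs the same ``bottom-and-right'' refactoring as in lemma~\ref{lem:geoctoc1}: $\Bun_{P_{1,1,a,b,a,1,1}}$ also fits in a Cartesian square whose two remaining legs are a map to $\Bun_{P_{1,a+1,b,a+1,1}}\times\Bun_{P_{1,1}}\times\Bun_{P_{1,1}}$ — obtained by forgetting $\cE_2$ and $\cE_{2n}$, with the two $\Bun_{P_{1,1}}$-factors recording $(\cE_1\sub\cE_2)$ and $(\cE_{2n+1}/\cE_{2n}\sub\cE_{2n+2}/\cE_{2n})$ — and a map to a product of the shape $\Bun_{P_{1,a}}\times\Bun_b\times\Bun_{P_{a,1,1}}$. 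Crucially, since the flag of $\Bun_{P_{1,a,b,a,1}}$ is the palindrome $1,a,b,a,1$ (the symmetric constant term $\CT_{a,b,a}$ with one rank-one step glued at each end, coming from $q'_{2n-2}$ and from $q''_{2n-2}$ respectively), the constant term of $\Bun_{P_{1,2n,1}}$ that appears is the \emph{symmetric} $\CT_{a+1,b,a+1}$. Composing $p_{a+1,b,a+1}$ with the first leg recovers $p_{1,1,2n-2,1,1}$ up to the coarsening map, so under this refactoring $\CT_{a,b,a}$ applied to the sheaf in question becomes a pull--push against $\CT_{a+1,b,a+1}(\cF)\boxtimes(\text{a transform of }\cG)$; since $2(a+1)+b=2n$ and $\cF\in\Shv(\Bun_{P_{1,2n,1}})_{\theta-\cusp}$, we have $\CT_{a+1,b,a+1}(\cF)=0$, so the whole expression vanishes. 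As $a$ and $b$ were arbitrary, this proves the lemma.

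The main obstacle is exactly the same as in lemma~\ref{lem:geoctoc1}: the moduli-theoretic bookkeeping needed to write down the intermediate stack $\Bun_{P_{1,1,a,b,a,1,1}}$ together with all its forgetting and merging maps and to check on $S$-points that the two relevant squares are Cartesian so that base change applies. The one point that genuinely requires the new hypothesis is the observation that the constant term of $\cF$ surfacing in the refactoring is of the symmetric type $\CT_{a+1,b,a+1}$ rather than some $\CT_{c,b,d}$ with $c\neq d$ — this is precisely what makes ``$\theta$-cuspidal'' (rather than fully cuspidal) sufficient, and it is forced by the palindromic shape $1,a,b,a,1$ built from the symmetric $\CT_{a,b,a}$ flanked by $q'_{2n-2}$ and $q''_{2n-2}$.
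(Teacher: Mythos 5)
Your proposal is correct and follows exactly the route the paper has in mind (the paper omits the proof, noting only that it is the same as Lemma~\ref{lem:geoctoc1}): build the $3\times 3$ diagram with apex $\Bun_{P_{1,1,a,b,a,1,1}}$, use base change along the two Cartesian squares to refactor $\CT_{a,b,a}\circ q_{1,1,2n-2,1,1,!}(\dots)^*$ through the right-and-bottom correspondences, and observe that the constant term of $\cF$ that surfaces is the symmetric $\CT_{a+1,b,a+1}$ — which is why $\theta$-cuspidality suffices here, whereas Lemma~\ref{lem:geoctoc1} needed full cuspidality because there the asymmetric $\CT_{a+1,b,a+2}$ arose. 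One tiny bookkeeping slip: the ``bottom-middle'' stack should be $\Bun_{P_{1,1,a}}\times\Bun_b\times\Bun_{P_{a,1,1}}$ (rank-one steps glued at \emph{both} ends, matching the two $\Bun_{P_{1,1}}$ factors) rather than $\Bun_{P_{1,a}}\times\Bun_b\times\Bun_{P_{a,1,1}}$; this does not affect the argument.
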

The proof of the lemma \ref{lem:geoctoc2} is the same as the proof of \ref{lem:geoctoc1} so we omit.

\end{proof}

Now we come to the proof of theorems \ref{thm:geocuspvan2} and \ref{thm:geocuspvan3}. We have the following immediate consequence of lemma \ref{lem:geopcuspvan}:
\begin{corollary}\label{cor:geocuspvan4}
    For any $\cL\in\Bun_1(K)$, consider the open substack $\Bun_{\Sp_{2n}}^{\bA^{2n}_{\cL}\backslash\{0\}}\sub \Bun_{\Sp_{2n}}^{\bA^{2n}}$ defined as the non-vanishing locus of the map $\alpha:\cL\to\cE_{2n}^s$. Consider \[\pi_{\cL,2n}^{\circ}:\Bun_{\Sp_{2n}}^{\bA^{2n}_{\cL}\backslash\{0\}}\to \Bun_{2n}.\] We have \[\Gamma_c(\pi_{\cL,2n}^{\circ,*}\cF)=0\] for any $\cF\in\Shv(\Bun_{2n})_{\cusp}$.
\end{corollary}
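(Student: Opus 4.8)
The plan is to reinterpret the open substack $\Bun_{\Sp_{2n}}^{\bA^{2n}_{\cL}\backslash\{0\}}$ as (a fiber of) the source of the geometric Klingen--mirabolic correspondence introduced before lemma~\ref{lem:geopcuspvan}, applied with $n$ replaced by $n-1$, and then to quote that lemma. First I would note that a point of $\Bun_{\Sp_{2n}}^{\bA^{2n}_{\cL}\backslash\{0\}}$ is a pair $(\cE_{2n}^s,\alpha)$ with $\cE_{2n}^s\in\Bun_{\Sp_{2n}}$ and $\alpha\colon\cL\hookrightarrow\cE_{2n}^s$ a subbundle inclusion (a nowhere--vanishing section of the associated $\bA^{2n}$--bundle), and that any line subbundle of a symplectic bundle is automatically isotropic. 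Hence $(\cE_{2n}^s,\alpha)\mapsto(\alpha(\cL)\subseteq\cE_{2n}^s)$, together with the tautological identification $\alpha(\cL)\cong\cL$, defines a morphism $j_{\cL}\colon\Bun_{\Sp_{2n}}^{\bA^{2n}_{\cL}\backslash\{0\}}\to\Bun_{P_{1,2n-2,1}'}$ that identifies its source with the fiber product $\Spec k\times_{\Bun_1}\Bun_{P_{1,2n-2,1}'}$, the point $\Spec k\to\Bun_1$ classifying $\cL$ and the right-hand map being $b_{1,2n-2,1}$ (remembering the isotropic line). Under $j_{\cL}$ the map $\pi^{\circ}_{\cL,2n}$ is the composite $\Spec k\times_{\Bun_1}\Bun_{P_{1,2n-2,1}'}\to\Bun_{P_{1,2n-2,1}'}\xrightarrow{\pi_{1,2n-2,1}}\Bun_{P_{1,2n-2,1}}\xrightarrow{p}\Bun_{2n}$, where $p$ forgets the flag and retains the ambient rank-$2n$ bundle; in particular $(\pi^{\circ}_{\cL,2n})^{*}\cF\cong j_{\cL}^{*}\pi_{1,2n-2,1}^{*}p^{*}\cF$.

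The second step is to check that $p^{*}\cF\in\Shv(\Bun_{P_{1,2n-2,1}})_{\theta-\cusp}$ whenever $\cF\in\Shv(\Bun_{2n})_{\cusp}$. For each $a,b\geq 0$ with $2a+b=2n-2$ one has a Cartesian square
\[
\Bun_{P_{1,a,b,a,1}}\;\cong\;\Bun_{P_{a+1,b,a+1}}\times_{\Bun_{a+1}\times\Bun_{b}\times\Bun_{a+1}}\bigl(\Bun_{P_{1,a}}\times\Bun_{b}\times\Bun_{P_{a,1}}\bigr),
\]
the second factor encoding a line in the first graded piece together with a corank-one subbundle of the last graded piece; the map $\Bun_{P_{1,a,b,a,1}}\to\Bun_{P_{1,2n-2,1}}$ refines the flag, and $\Bun_{P_{1,a,b,a,1}}\to\Bun_{P_{a+1,b,a+1}}$ forgets those extremal sub/quotient flags. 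Since the composite $\Bun_{P_{1,a,b,a,1}}\to\Bun_{P_{1,2n-2,1}}\xrightarrow{p}\Bun_{2n}$ factors through $\Bun_{P_{1,a,b,a,1}}\to\Bun_{P_{a+1,b,a+1}}\to\Bun_{2n}$, base change along the square gives $\CT_{a,b,a}(p^{*}\cF)\cong\nu^{*}\bigl(\CT_{a+1,b,a+1}\cF\bigr)$ for the evident projection $\nu$, and $\CT_{a+1,b,a+1}\cF=0$ because $P_{a+1,b,a+1}$ is a proper parabolic of $\GL_{2n}$ (here $n\geq 1$; $n=0$ is vacuous). This is the same Cartesian-square mechanism as in the proof of lemma~\ref{lem:geoctoc1}, and is the geometric counterpart of the fact used in \S\ref{sec:cuspidal} that the relevant constant terms of a cusp form are $\theta$-cuspidal.

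Finally I would invoke lemma~\ref{lem:geopcuspvan} (with $n$ replaced by $n-1$): since $p^{*}\cF$ is $\theta$-cuspidal, $\int_{1,2n-2,1}(p^{*}\cF)=b_{1,2n-2,1,!}\,\pi_{1,2n-2,1}^{*}p^{*}\cF=0$ in $\Shv(\Bun_1)$. Combining the identification $(\pi^{\circ}_{\cL,2n})^{*}\cF\cong j_{\cL}^{*}\pi_{1,2n-2,1}^{*}p^{*}\cF$ of the first step with proper base change along the Cartesian square $\Bun_{\Sp_{2n}}^{\bA^{2n}_{\cL}\backslash\{0\}}=\Spec k\times_{\Bun_1}\Bun_{P_{1,2n-2,1}'}$ then yields
\[
\Gamma_c\bigl((\pi^{\circ}_{\cL,2n})^{*}\cF\bigr)\;\cong\;[\cL]^{*}\bigl(b_{1,2n-2,1,!}\,\pi_{1,2n-2,1}^{*}p^{*}\cF\bigr)\;=\;[\cL]^{*}(0)\;=\;0,
\]
as desired. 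The only real inputs are lemma~\ref{lem:geopcuspvan} itself and the routine diagram chase in the second step; the point that needs the most care is the geometric translation in the first step --- in particular that ``non-vanishing locus'' should be read as ``$\alpha$ is a subbundle inclusion'', so that $\Bun_{\Sp_{2n}}^{\bA^{2n}_{\cL}\backslash\{0\}}$ is literally a fiber of $b_{1,2n-2,1}$ and no stratification of $\Bun_{\Sp_{2n}}^{\bA^{2n}}$ by vanishing divisors is needed.
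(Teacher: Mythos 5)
Your steps (2) and (3) are correct and are exactly the content of the paper's one-line proof: pull-back along $\Bun_{P_{1,2n-2,1}}\to\Bun_{2n}$ sends cuspidal to $\theta$-cuspidal by the Cartesian-square mechanism of lemma~\ref{lem:geoctoc1}, and then lemma~\ref{lem:geopcuspvan} applies. The problem is in step (1). You read the open substack $\Bun_{\Sp_{2n}}^{\bA^{2n}_{\cL}\backslash\{0\}}$ as the locus where $\alpha\colon\cL\to\cE_{2n}^s$ is \emph{nowhere} vanishing (a saturated line subbundle), and on that basis claim it is a single fiber of $b_{1,2n-2,1}$ over $\cL$, so that no stratification is needed. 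That reading is inconsistent with how the corollary is used in the very next proofs. In the proof of theorem~\ref{thm:geocuspvan2}, $g\colon\Bun_{\Sp_{2n}}^{\bA^{2n}_{\cL}\backslash\{0\}}\to\Bun_{\Sp_{2n}}$ is asserted to be a rank-$m'$ vector bundle \emph{with zero section removed} over $U$, and the distinguished triangle written there is the one for the complement of the zero section of a vector bundle; the saturated locus is a strictly smaller open (for $\cL$ very negative there are always non-saturated nonzero maps $\cL\to\cE$). In the proof of theorem~\ref{thm:geocuspvan3}, the decomposition $\Bun_{\Sp_{2n}}^{\bA^{2n}_{\cL}}=\Bun_{\Sp_{2n}}^{\bA^{2n}_{\cL}\backslash\{0\}}\cup\Bun_{\Sp_{2n}}$ is only exhaustive if $\backslash\{0\}$ means ``complement of the zero section,'' i.e. $\alpha$ not identically zero. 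So your interpretation makes the corollary too weak for its intended applications and the ``no stratification is needed'' claim is false.

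The fix is close at hand and keeps your steps (2)--(3) untouched. Stratify $\Bun_{\Sp_{2n}}^{\bA^{2n}_{\cL}\backslash\{0\}}$ by the degree $d\geq 0$ of the vanishing divisor $D_\alpha$: the $d$-th stratum is identified with $\mathrm{Sym}^d C\times_{\Bun_1}\Bun_{P_{1,2n-2,1}'}$ via $(\cE,\alpha)\mapsto (D_\alpha,\,\cL(D_\alpha)\hookrightarrow\cE)$, where $\mathrm{Sym}^d C\to\Bun_1$ is $D\mapsto\cL(D)$. On each stratum the forgetful map to $\Bun_{2n}$ factors through $\pi_{1,2n-2,1}$, so by proper base change along the Cartesian square and lemma~\ref{lem:geopcuspvan} (with $p^*\cF$ $\theta$-cuspidal, as you showed), the restriction of $\pi_{\cL,2n}^{\circ,*}\cF$ to each stratum has vanishing $\Gamma_c$. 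Over the support of $\pi_{2n}^*\cF$ (bounded Harder--Narasimhan slopes) only finitely many $d$ occur, so excision gives $\Gamma_c(\pi_{\cL,2n}^{\circ,*}\cF)=0$. In short: the translation to the Klingen--mirabolic period is not an identification with a single fiber of $b_{1,2n-2,1}$ but with a stratification whose strata are $\mathrm{Sym}^d C$-families of such fibers, and this extra (routine) step is genuinely required.
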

\begin{proof}
    One only needs to note that pull-back along $p_{1,2n,1}:\Bun_{P_{1,2n-2,1}}\to\Bun_{2n}$ sends cuspidal objects to $\theta$-cuspidal objects.
\end{proof}

\begin{proof}[Proof of theorem \ref{thm:geocuspvan2}]
Consider diagram \[\begin{tikzcd}
    \Bun_{\Sp_{2n}}^{\bA^{2n}_{\cL}\backslash\{0\}} \ar[d, "g"] \ar[dr, "\pi_{\cL,2n}^{\circ}"] & \\
    \Bun_{\Sp_{2n}} \ar[r, "\pi_{2n}"] & \Bun_{2n}
\end{tikzcd}.\] We fix a cuspidal object $\cF\in\Shv(\Bun_{2n})$. By the cuspidality, we know that $\cF$ is supported on a quasi-compact open substack of $\Bun_{2n}$ which has bounded Harder-Narasimhan slopes on each connected component. Therefore, for any integer $m\in\mathbb{Z}_{\geq 0}$, one can find $\cL\in\Bun_1$ such that $g$ is a vector bundle of rank $m'\geq m$ with zero section removed over the support of $\pi_{2n}^*\cF$ (the support is denoted $U\sub\Bun_{\Sp_{2n}}$). One has the distinguished triangle
\[\cL_U[-1]\to g_!\uk_{g^{-1}(U)} \to \uk_{U}[-m'] \to \] for some local system $\cL_U\in\Shv(U)^{\heartsuit}$ (we are using the naive t-structure). From this, we get a distinguished triangle \[\Gamma_c(\cL_U\otimes \pi_{2n}^*\cF)[-1] \to \Gamma_c(\pi_{\cL,2n}^{\circ,*}\cF) \to \Gamma_c(\pi_{2n}^*\cF)[-m'] \to .\] By corollary \ref{cor:geocuspvan4}, we get $\Gamma_c(\pi^*_{2n}\cF)[-m']\cong \Gamma_c(\cL_U\otimes \pi_{2n}^*\cF)$. Since the integer $m'$ can be arbitrarily large, we know that $\Gamma_c(\pi_{2n}^*\cF)$ is infinitely connective, hence must be zero. This concludes the proof.
    
\end{proof}

\begin{proof}[Proof of theorem \ref{thm:geocuspvan3}]
    Using stratification $\Bun_{\Sp_{2n}}^{\bA^{2n}_{\cL}}=\Bun_{\Sp_{2n}}^{\bA^{2n}_{\cL}\backslash\{0\}}\cup \Bun_{\Sp_{2n}}$, one can conclude the proof by combining theorem \ref{thm:geocuspvan2} and corollary \ref{cor:geocuspvan4}.
\end{proof}

\phantomsection
\addcontentsline{toc}{part}{References}
\renewcommand{\addcontentsline}[3]{}
\printbibliography

\end{document}